\newenvironment{mycpctenum}
{\setlength{\leftmargini}{0mm}
\begin{compactenum}}
{\end{compactenum}}
\newcommand{\arkfamily}{\fontencoding{U}\fontfamily{ark}\selectfont}
\newcommand{\ark@sym}[1]{{\arkfamily\symbol{#1}}}
\newcommand{\leftthumbsup}{\ark@sym{'125}}
\newcommand{\smallpencil}{\ark@sym{'120}}
\newcounter{genus}
\newcounter{gagid}
\renewcommand{\thegagid}{g\arabic{genus}.\arabic{gagid}}
\def\gag{\refstepcounter{gagid}
\null\hfill\smallpencil\thegagid}
\begin{document}

\title{On the automorphisms of Mukai varieties}

\author{Thomas Dedieu}
\author{Laurent Manivel, \linebreak\MakeLowercase{with an appendix by}  Yuri Prokhorov}
\date{\today}

\maketitle

\begin{abstract}
Mukai varieties are Fano varieties of Picard number one and coindex three. 
In genus seven to ten they are linear sections of some special homogeneous varieties.
We describe the generic automorphism groups of these varieties. When they are expected
to be trivial for dimensional reasons, we show they are indeed trivial,
up to three interesting and unexpected exceptions in genera $7, 8,9$, and codimension 
$4, 3, 2$ respectively. We conclude in particular that a generic prime Fano threefold
of genus $g$ has no automorphisms for $7\le g\le 10$. In the Appendix by Y. Prokhorov, 
the latter statement is extended to $g=12$.
\end{abstract}

\theoremstyle{plain}
\newtheorem{theorem}{Theorem}
\newtheorem*{conjecture*}{Conjecture}
\newtheorem{prop}[theorem]{Proposition}
\newtheorem{lemma}[theorem]{Lemma}
\newtheorem{coro}[theorem]{Corollary}
\newtheorem{remark}[theorem]{Remark}

\newtheorem{mtheorem}[subsection]{Theorem}
\newtheorem{claim}[subsubsection]{Claim}
\theoremstyle{definition}
\newtheorem{mremark}[subsection]{Remark}
\makeatletter
\makeatother

\let\mathbb\mathbf
\let\subset\subseteq
\let\ge\geqslant
\let\geq\geqslant
\let\le\leqslant
\let\leq\leqslant

\def\AA{{\mathbb{A}}}
\def\CC{{\mathbb{C}}}
\def\RR{{\mathbb{R}}}
\def\OO{{\mathbb{O}}}
\def\HH{{\mathbb{H}}}
\def\PP{{\mathbb{P}}}
\def\QQ{{\mathbb{Q}}}\def\ZZ{{\mathbb{Z}}}
\def\SS{{\mathbb{S}}}
\def\cO{{\mathcal{O}}}
\def\O{{\mathcal{O}}}
\def\cE{{\mathcal{E}}}
\def\cF{{\mathcal{F}}}
\def\cG{{\mathcal{G}}}
\def\cL{{\mathcal{L}}}
\def\cU{{\mathcal{U}}}
\def\cV{{\mathcal{V}}}
\def\ra{{\rightarrow}}
\def\lra{{\longrightarrow}}
\def\ft{{\mathfrak t}}
\def\fg{{\mathfrak g}}
\def\fso{\mathfrak{so}}
\def\fsp{\mathfrak{sp}}
\def\fsl{\mathfrak{sl}}
\def\tsum{{\textstyle\sum}}

\def\Ad{\mathrm{Ad}}
\def\Id{\mathrm{Id}}
\newcommand{\Sn}[1][n]{\mathfrak{S}_{#1}}

\def\Gr{\mathrm{Gr}}
\def\SO{\mathrm{SO}}
\def\GL{\mathrm{GL}}
\def\PGL{\mathrm{PGL}}
\def\Sp{\mathrm{Sp}}
\def\PSp{\mathrm{PSp}}
\def\G{\mathrm{G}}
\def\LG{\mathrm{LG}}
\def\IG{\mathrm{IG}}
\def\Pf{\operatorname{Pf}}
\def\PSL{\mathrm{PSL}}
\def\SL{\mathrm{SL}}
\def\Spin{\mathrm{Spin}}
\def\Gm{\mathbb{G}_m}
\def\Ga{\mathbb{G}_a}

\def\Aut{\mathrm{Aut}}
\def\Hom{\mathrm{Hom}}
\def\Stab{\mathrm{Stab}}
\def\codim{\mathrm{codim}}

\let\Iff\iff
\def\iff{\Leftrightarrow}
\def\epsilon{\varepsilon}
\newcommand{\acts}{\circlearrowleft}
\newcommand{\pprime}{%
  \mbox{\raisebox{.05em}{\smaller[5](}}%
  \prime%
  \mbox{\raisebox{.05em}{\smaller[5])}}%
  }

\tableofcontents

\section{Introduction} 
The classification of Fano threefolds by Fano, Iskovskih, and finally Mori and Mukai is a milestone in 
the history of complex algebraic geometry \cite{ip}. From this classification, and the subsequent work
of Mukai, a prime Fano threefold of genus $g=7,8,9,10$ turns out to be a linear section of a complete 
$G$-homogeneous variety 
$M_g\subset \PP (V_{g})$, for some simple algebraic group $G$ of which $V_{g}$ is an irreducible 
representation. These groups, representations and homogeneous varieties are recalled in the 
table below (the notation $k_g$ is introduced above
Theorem~\ref{stab-Fano}):\medskip 

\begin{center}
\begin{tabular}{c|cccccc|c}
  $g$ & $G$ & $\dim(G)$ & $V_g$ & $\dim(V_g)$ & $M_g$
  & $\mathrm{dim}(M_g)$ & $k_g$ \\
  \hline
  $7$ & $\Spin_{10}$ & $45$ & $\Delta_+$ & $16$ & $\SS_{10}$ & $10$
  & $4$ \\
$8$ & $\SL_6$ & $35$ & $\wedge^2\CC^6$ & $15$ & $\G(2,6)$ & $8$ & $3$ \\
$9$ & $\Sp_6$ & $21$ & $\wedge^{\langle 3\rangle}\CC^6$ & $14$ &
                                                                 $\LG(3,6)$
  & $6$ & $2$ \\
$10$ & $G_2$ & $14$ & $\fg_2$ & $14$ &  $G_2/P_2$ & $5$ & $2$
\end{tabular}
\end{center}

\medskip\noindent
Here $\Delta_+$ denotes one of the half-spin representations of
$\Spin_{10}$. On the other hand $\wedge^{\langle 3\rangle}\CC^6
\subset \wedge^3\CC^6$ is defined by the condition that the contraction by a two-form that is invariant under $\Sp_6$,
vanishes. More details on these representations and varieties will be provided in the relevant sections. 

\medskip
The following numerical relations hold:
$$ \mathrm{codim} (M_g)=g-2, \qquad \mathrm{index} (M_g)=\mathrm{dim}(M_g)-2.$$
Fano varieties $X$ with coindex $\mathrm{dim}(X)+1-\mathrm{index}(X)=3$ are called 
{\it Mukai varieties}. They were classified by Mukai \cite{mukai-coindex3}, modulo a conjecture 
on the existence of smooth  canonical divisors which was later proved
by Mella \cite{mella},
see also \cite{CLM98} for a different approach based on the theory of
\emph{extensions}.
Note that the coindex is preserved by taking hyperplane sections, so it is enough to classify 
maximal Mukai varieties, those that are not hyperplane sections of any smooth variety. The homogeneous
varieties $M_g$ are precisely the maximal Mukai varieties of degree $2g-2$, for $7\le g\le 10$. The Mukai
varieties of those degrees are thus the smooth linear sections of the minimally embedded 
$M_g\subset \PP (V_g)$. These are the varieties we study in this paper. 

Of course $M_g$ has a big automorphism group. Its hyperplane sections also admit non trivial automorphisms. In genus $g=7,8,9$ there is in fact 
a unique smooth hyperplane section, up to the action of $G$: the representations $V_g$ (or rather their duals) are prehomogeneous, and the claim readily follows from the easy classification of the 
$G$-orbits. Moreover the smooth hyperplane section is acted on by the generic stabilizer of $\PP(V_g^*)$,
which was computed in \cite{sk}.
In genus $g=8$ one obtains the isotropic 
Grassmannian $\IG(2,6)$, which is homogeneous under the action of the symplectic group $\Sp_6$. In 
genus $g=7$, the automorphism group of the hyperplane section is not reductive and its action
is only prehomogeneous.  In genus $g=9$, the automorphism group is reductive, but too small to act
on the hyperplane section with an open orbit. In genus $10$, a generic element of $V_{10}^*=\fg_2$ is 
a regular semisimple element in the Lie algebra, which is stabilized by a maximal torus in $G_2$; 
up to the action of $G_2$ there is therefore a one dimensional family of hyperplane sections of $G_2/P_2$, 
whose connected automorphism group is a two dimensional torus (see \cite{pz1, pz2} for a recent study).

There are a few other small codimensional sections of $M_g$ with non
trivial automorphisms, the existence of which can be deduced from the
fact that the action of $\GL_k\times G$ on $\CC^k\otimes V_g$
is prehomogeneous, for some small integers $k>1$, with non trivial generic stabilizer. This happens 
for $g=7$, $k=2,3$, and for $g=8$, $k=2$ (see \cite{bfm} for the connection with exceptional Lie groups). 
The case $g=7, k=2$ was studied in detail in \cite{kuz-s10}. 

Let us compile those $k$-codimensional linear sections $X\subset M_g$ with non trivial automorphisms 
in the table below, with their generic connected automorphism groups  (possibly up to some finite group). 
\medskip 

\begin{center}
\begin{tabular}{cccl}
$g$ & $k$ & $\Aut^0(X)$ &  \cite{sk} \\
$7$ & $1$   &  $(\Gm\times \Spin_7)\times \Ga^8$ & Proposition 31 p.121 \\
 & $2$  &  $G_2\times \SL_2$ & Proposition 32 p.124 \\
 & $3$  &  $\SL_2^2$ & Proposition 33 p.126  \\
$8$ & $1$ &  $\Sp_6$ & \\
 & $2$ &  $\SL_2^3$ & Proposition 12 p.94\\
$9$ & $1$ & $\SL_3$ & Proposition 22 p.108\\
$10$ & $1$ & $\Gm^2$ & \cite{pz1}
\end{tabular}
\end{center}

\medskip\noindent We are thus led to let $k_g=4,3,2,2$ for $g=7,8,9,10$, respectively. 
The first main result of this paper can then be stated as follows. 

\begin{theorem}\label{stab-Fano}
For $g=7,8,9,10$, a generic linear section $X$ of $M_g$, of dimension at least three, and of 
codimension $k\ge k_g$, has only trivial automorphisms, except for $k=k_g$ and $g=7, 8, 9$. 
\end{theorem}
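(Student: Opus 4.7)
The plan is to identify $\Aut(X)$ with a subgroup of the stabilizer in $G$ of the linear span $W\subset V_g$ of $X$, and then to prove that this stabilizer is generically trivial in the claimed range by exhibiting explicit witnesses and invoking semicontinuity.

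First I would reduce to the stabilizer problem. For $X = M_g\cap \PP(W)$ of dimension at least three, the Lefschetz hyperplane theorem yields $\mathrm{Pic}(X) = \ZZ\cdot H$, so every automorphism of $X$ preserves the polarization and hence embeds into $\PGL(W)$, after identifying $W^*$ with $H^0(X, H)$ via the projective normality of $M_g\subset \PP(V_g)$. Since $M_g$ is the unique maximal Mukai variety of its degree and type (by Mukai's classification, or equivalently by the theory of linear extensions \cite{CLM98}), any projective automorphism of $X$ extends to $M_g$, hence lies in $G$ up to scalars and a possible finite group of outer automorphisms. This gives $\Aut^0(X) = \Stab_G(W)^0$.

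Next I would exploit a dimension count. The necessary condition $\dim G \le \dim \Gr(k, V_g) = k(\dim V_g - k)$ holds precisely when $k\ge k_g$ in each of our four cases; it is strict except when $k = k_g$ with $g = 7, 8, 9$, where the margin is only $3, 1, 3$ respectively, while for $(g,k) = (10, 2)$ the margin $10$ is comfortable. In the range claimed by the theorem, the plan is to exhibit, for each $(g, k)$, a single $W_0$ whose $G$-stabilizer is trivial; semicontinuity then forces the generic $W$ to have trivial stabilizer as well. For the delicate case $(g, k) = (10, 2)$, a natural candidate $W_0$ is a generic two-plane inside a Cartan subalgebra of $V_{10} = \fg_2$, whose $G_2$-stabilizer is contained in the Weyl group of $G_2$, and the generic such plane has trivial Weyl stabilizer. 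For $k \ge k_g + 1$, either an analogous explicit construction works, or one can argue by induction starting from $k = k_g + 1$ and iterating via generic hyperplane sections (which preserve triviality of the automorphism group).

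The hard part will be the extension step: rigorously showing that every projective automorphism of $X$ extends to an automorphism of $M_g$. This is immediate in codimension one but subtler in higher codimension, and will likely require the quadratic generation of $M_g\subset \PP(V_g)$ together with the explicit structure of $V_g$ as a $G$-representation. A secondary difficulty is the control of finite stabilizers, which requires one to enumerate elements of finite order in $G$ whose fixed locus in $V_g$ has codimension less than $k$, and to verify that the subspaces stabilized by such elements form a proper subvariety of $\Gr(k, V_g)$.
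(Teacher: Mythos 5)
Your reduction to the stabilizer problem is the right framework, and your numerology confirming that the inequality $\dim G\le k(\dim V_g-k)$ kicks in exactly at $k=k_g$ is correct. But there are two genuine gaps that make the rest of the plan fail.

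First, the extension step is not a secondary point that follows from "quadratic generation" or uniqueness of $M_g$ as an extension. The paper (Proposition~\ref{p:mukai}) uses Mukai's special rank-$r_g$ vector bundle $E_g$ on $X$: one must show that this bundle is preserved (up to isomorphism) by any abstract isomorphism of sections, via stability and a restriction-to-$K3$ induction, and then that such an isomorphism of bundles yields an element of $G$ by cohomological vanishing. Your sketch does not produce this mechanism.

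Second, and more seriously, your "exhibit one $W_0$ with trivial stabilizer and invoke semicontinuity" cannot work as stated. Semicontinuity of dimension controls $\dim\Stab(W)$, not the finite part: a generic $W$ can perfectly well carry a nontrivial \emph{finite} stabilizer even when a special $W_0$ has trivial stabilizer. The exceptional cases of Theorem~\ref{symetries} are exactly of this type (e.g.\ $(\ZZ/2\ZZ)^2$ for $g=7$, $k=4$), and they occur at the boundary $k=k_g$, where your dimension count is tight. What the paper actually does to prove Theorem~\ref{stab-sous-espace} is a uniform dimension count over \emph{all} nontrivial unipotent and semisimple conjugacy classes in $G$, using the Jordan decomposition to separate the two, Proposition~\ref{dim-stable} to bound the variety of stable $m$-planes for each Jordan/eigenvalue type, and the inequalities $(\bigstar)$ and (\leftthumbsup) to conclude. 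This is not a footnote to your argument; it is the bulk of the proof. You flag "control of finite stabilizers" as a secondary difficulty, but it is the primary one, and the enumeration you allude to (elements of finite order with small fixed-point codimension) is precisely the lengthy stratified analysis occupying Sections~3--6.

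A smaller but fatal slip: your witness for $(g,k)=(10,2)$ is a generic two-plane inside a Cartan subalgebra $\ft\subset\fg_2$. Since $\mathrm{rk}(G_2)=2$, that "generic two-plane" is $\ft$ itself, and $\ft$ is stabilized pointwise by the whole maximal torus $T$ under the adjoint action, so $\Stab(\ft)\supset T$ is positive-dimensional, not trivial. So this particular $W_0$ does not work; a correct witness would have to be chosen much more carefully, and in fact the paper avoids explicit witnesses altogether (they appear only as a remark in the $g=8$ unipotent case) precisely because the stratified dimension count is more robust.
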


Quite surprinsingly, we could not find this result in the literature for the well-studied case
of Fano threefolds. For any smooth prime Fano threefold of genus $g<12$,  the automorphism group is known
to be finite by \cite{kps}. It is known to be trivial for a general prime Fano threefold of genus $6$ 
\cite[Proposition 3.21]{dk}, but the corresponding statement in higher genus seems new. Recall that 
the genus of a prime Fano threefold cannot be greater than twelve, nor be equal to eleven. The case 
where $g=12$, which requires a different approach, is treated in the Appendix by Yuri Prokhorov. 

\begin{coro} 
The automorphism group of a general prime Fano threefold of genus $g\ge 7$ is trivial. 
\end{coro}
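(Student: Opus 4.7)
The plan is to reduce to Theorem~\ref{stab-Fano} for $7 \le g \le 10$, and to appeal to the Appendix for $g=12$; recall that $g=11$ is ruled out by the Iskovskih--Mukai classification, so these cases cover all prime Fano threefolds of genus at least seven.

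First I would invoke the classical fact, already recalled in the introduction, that a prime Fano threefold of genus $g \in \{7,8,9,10\}$ arises as a smooth linear section of the maximal Mukai variety $M_g \subset \PP(V_g)$. Since a threefold section of $M_g$ has codimension $\dim(M_g)-3$, a direct reading of the table gives codimensions $k = 7, 5, 3, 2$ for $g=7,8,9,10$ respectively. Comparing with the values $k_g = 4, 3, 2, 2$, one checks that in all four cases $k \ge k_g$, that the section has dimension three (hence at least three), and that the exceptional situation $(k=k_g,\, g\in\{7,8,9\})$ of Theorem~\ref{stab-Fano} is never reached: indeed for $g=7,8,9$ the codimension strictly exceeds $k_g$, while for $g=10$ the exception is explicitly excluded. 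Therefore Theorem~\ref{stab-Fano} directly yields the triviality of $\Aut(X)$ for a general $X$ in each of these four families.

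The remaining case $g=12$ is not accessible by the linear-section method used above, since the corresponding Mukai model is not a linear section of a homogeneous variety; this is precisely what the Appendix by Prokhorov handles, and I would simply cite it to close the argument.

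There is essentially no obstacle beyond the bookkeeping in comparing $\dim(M_g)-3$ against $k_g$; the content of the corollary is entirely absorbed by Theorem~\ref{stab-Fano} and the Appendix, and the only thing to verify is that the threefold cuts fall squarely on the non-exceptional side of the dichotomy in Theorem~\ref{stab-Fano}.
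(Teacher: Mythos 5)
Your proof is correct and takes essentially the same route as the paper: the corollary is a direct consequence of Theorem~\ref{stab-Fano} once one checks that the threefold codimensions $7,5,3,2$ for $g=7,8,9,10$ satisfy $k\geq k_g$ with the exceptional pair $(k_g,g)$, $g\in\{7,8,9\}$, never realized, together with the Appendix for $g=12$ and the nonexistence of prime Fano threefolds of genus $11$. The codimension bookkeeping and the reading of the dichotomy in Theorem~\ref{stab-Fano} are exactly what the paper intends.
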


Our strategy to prove Theorem \ref{stab-Fano}
will be to reduce this statement to the following one. Denote by $\bar{G}$ the 
image of $G$ in $\PGL(V_g)$.

\begin{theorem}\label{stab-sous-espace}
For $g=7,8,9,10$, let $L\subset V_g$ be a generic linear subspace
such that
\begin{equation}
\label{eq:c-dim.stab}
\tag{$\star$}
  \min\bigl(\codim_{V_g}(L),\dim(L)\bigr) \geq k_g.
\end{equation}
Then the stabilizer of $L$ in $\bar{G}$ is trivial, except
if equality holds in \eqref{eq:c-dim.stab} and $g=7, 8, 9$. 
\end{theorem}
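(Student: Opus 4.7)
The key identification is that the Lie algebra of $\Stab_{\bar{G}}(L)$ equals the kernel of the linear map $\mu_L \colon \fg \to \Hom(L, V_g/L)$, $X \mapsto (v \mapsto Xv \bmod L)$. Thus $\Stab_{\bar{G}}(L)$ is finite if and only if $\mu_L$ is injective, a property that is Zariski open in $L$. A necessary condition is $\dim \fg \leq k(\dim V_g - k)$, which by inspection of the table in the introduction is satisfied precisely when $k_g \leq k \leq \dim V_g - k_g$.

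By the self-duality of $V_9$ and $V_{10}$, and by applying the outer automorphism of $G$ for $g=7,8$ (which exchanges $V_g$ and $V_g^*$), the assignment $L \mapsto L^\perp$ identifies, up to a twist by an automorphism of $G$, the actions of $\bar{G}$ on $\Gr(k, V_g)$ and on $\Gr(\dim V_g - k, V_g)$. Hence the statement for $\dim L = k$ is equivalent to the one for $\dim L = \dim V_g - k$, and we may restrict to $k_g \leq k \leq \dim V_g / 2$.

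For the infinitesimal step, the plan is to exhibit, for each admissible pair $(g, k)$ other than $k=k_g$ in genera $g \in \{7,8,9\}$, a specific $L_0 \in \Gr(k, V_g)$---typically the span of a well-chosen collection of weight vectors for a maximal torus of $G$---for which $\mu_{L_0}$ is injective; this will be verified by a direct computation using the weight decomposition of $V_g$. Zariski openness then yields $\fg_L = 0$, hence the finiteness of $\Stab_{\bar{G}}(L)$ for generic $L$. In the excluded cases, $\mu_L$ has a positive-dimensional kernel for \emph{every} $L$, producing the exceptional non-trivial stabilizers recorded in the introduction.

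To upgrade finiteness to triviality we analyze the incidence variety $Y = \{(L, g) \in \Gr(k, V_g) \times \bar{G} : gL = L\}$ via its projection to $\bar{G}$. For each non-trivial conjugacy class $C \subset \bar{G}$ with representative $g$, the fixed locus $\mathrm{Fix}(g) \subset \Gr(k, V_g)$ is a union of products of Grassmannians of (generalized) eigenspaces of $g$ on $V_g$, whose dimension is governed by the Jordan structure of $g$. A case-by-case analysis should yield $\dim C + \dim \mathrm{Fix}(g) < \dim \Gr(k, V_g)$, so that every irreducible component of $Y$ other than $\Gr(k, V_g) \times \{1\}$ projects into a proper closed subvariety of the Grassmannian, and the generic stabilizer is trivial. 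The main obstacle lies in the borderline cases---$k = k_g + 1$ in genera $7, 8, 9$ and $k = 2$ in genus $10$---where these codimension estimates are tight and must be handled genus by genus using the specific representation-theoretic structure of $V_g$.
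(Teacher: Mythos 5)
Your proposal captures the right general strategy (dimension counting over conjugacy classes in the incidence variety), and the duality observation in your second paragraph is a genuine addition: the paper checks the inequality for all $m$ in the admissible range, whereas you observe that $L\mapsto L^\perp$, combined with self-duality of $V_9, V_{10}$ or the outer automorphism for $g=7,8$, would halve the work. Your infinitesimal criterion in the first paragraph also correctly identifies $k_g$ as the threshold below which $\mu_L$ is forced to have a kernel.

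However there are several gaps. First, a factual error: you claim that "in the excluded cases, $\mu_L$ has a positive-dimensional kernel for \emph{every} $L$." This is only true for $g=8$, where the exceptional stabilizer contains a $\CC^*$. For $g=7$ and $g=9$ the exceptional stabilizers are $(\ZZ/2\ZZ)^2$ and $(\ZZ/2\ZZ)^4$ respectively, hence finite, so $\mu_L$ \emph{is} injective for generic $L$ at $k=k_g$. Your infinitesimal argument alone cannot detect these finite groups, and cannot explain why $k_g$ is exceptional for $g=7,9$; establishing their non-triviality requires exhibiting the specific involutions (as the paper does with $t_U=\Id_U-\Id_{U^\perp}$ in genus 7 and $\pm\Id_{A_i}$ in genus 9).

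Second, your description of $\mathrm{Fix}(g)\subset\Gr(k,V_g)$ as "a union of products of Grassmannians of (generalized) eigenspaces" is only correct for semisimple $g$. For a unipotent $g$ the whole of $V_g$ is a single generalized eigenspace, so that description is vacuous; the variety of $g$-stable $k$-planes for unipotent $g$ has a more intricate structure whose dimension is controlled by the Jordan type (this is Proposition~\ref{dim-stable}(2) of the paper, which requires its own argument). Relatedly, you do not use the key reduction that makes the conjugacy-class analysis tractable at all: since $g_s$ and $g_n$ are polynomials in $g$, a subspace $L$ is $g$-stable if and only if it is both $g_s$-stable and $g_n$-stable. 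This lets one handle unipotent elements (finitely many orbits, classified by partitions) and semisimple elements (stratified by eigenvalue coincidences) completely separately, sidestepping the unwieldy set of all conjugacy classes of mixed type. Without this reduction, "a case-by-case analysis" over all conjugacy classes in $\bar G$ is not a well-posed finite task.

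Finally, even granting the correct organizing principle, the substance of the proof — computing the Jordan type of the action on $V_g$ for each nilpotent orbit (via the Clebsch–Gordan argument of Propositions~\ref{Jordan-tensor} and~\ref{Jordan-wedge}), enumerating the possible eigenvalue degenerations and collapsings genus by genus, and verifying the dimension inequality for each stratum and each admissible $m$ — constitutes almost the entire paper and is simply absent from your sketch. The proposal is a correct outline of the approach, not a proof.
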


In order to deduce Theorem \ref{stab-Fano} from Theorem \ref{stab-sous-espace}, we need to prove that
any automorphism of $X=M_g\cap\PP(L)$ must be induced by an element of $G$ stabilizing $L$ (at least 
when the latter is generic). This kind of statement is at the heart of Mukai's approach to prime Fano
threefolds and $K3$ surfaces of small genus. More precisely, each Mukai variety of genus $g=7,8,9,10$ 
admits a unique special vector bundle $E_g$ of rank $r_g=5,2,3,2$, which defines its embedding into $M_g$, 
itself naturally embedded in a Grassmannian of rank $r_g$ subspaces.
The case $n=3$ of the statement below has been proved by
Mukai \cite[Theorem 0.9]{mukai-K3Fano} (for genus $10$, 
see  Proposition 5.1 in op.\ cit.\ and the discussion that follows).
The case $n=2$ is also claimed in \cite[Theorem 0.2]{mukai-K3Fano}, 
but the proof seems to apply 
only under a stability assumption (see \cite[Section 2, (2.2)]{mukai-K3Fano}). 
The latter stability assumption holds as soon as the Picard group is
generated  
by the hyperplane line bundle, a condition which is always fulfilled
in dimension $n \geq 3$ by the Lefschetz theorem,
but holds under a very generality assumption in
dimension $n=2$ by a suitable version of the Noether--Lefschetz theorem.

\begin{prop}\label{p:mukai}
Let $X=M_g\cap \PP(L)$ and $X'=M_g\cap \PP (L')$ be smooth linear sections of 
$M_g$, of the same dimension $n\geq 3$, and suppose that $\varphi: X\rightarrow X'$ is an isomorphism. Then there exists $g\in G$ such
that $L'=g(L)$ and $\varphi = g^*$.
\end{prop}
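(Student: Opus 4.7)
The plan is to run Mukai's reconstruction argument for the Grassmannian embedding. The guiding observation is that the factorisation $M_g \subset \Gr(r_g, W_g) \subset \PP(V_g)$ (by the Plücker or spinor embedding) is canonical, with $W_g$ intrinsically recovered from the cohomology of the tautological Mukai bundle $E_g$ of rank $r_g$ on $M_g$. Writing $E$ and $E'$ for the restrictions of $E_g$ to $X$ and $X'$ respectively, the heart of the proof will consist in establishing that $\varphi^* E' \cong E$; once this is known, the induced linear isomorphism $\Phi\colon H^0(X', (E')^*) \to H^0(X, E^*)$ fits in a commutative square
$$
\begin{array}{ccc}
X & \xrightarrow{\varphi} & X' \\
\downarrow & & \downarrow \\
\Gr(r_g, W_g) & \xrightarrow{\Phi_*} & \Gr(r_g, W_g),
\end{array}
$$
whose bottom arrow must carry $M_g$ to itself because it identifies the two tautological Grassmannian embeddings, hence is the action of some $g \in G$; by construction $g(L) = L'$ and $g^* = \varphi$.

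Before producing the isomorphism of bundles, note that for $n \ge 3$ the Lefschetz hyperplane theorem yields $\mathrm{Pic}(X) = \ZZ \cdot H$ (and similarly for $X'$), so $\varphi^* H' = H$ and $\varphi^* E'$ automatically shares the Chern character of $E$. Mumford stability is then unambiguous, and $E$ is stable because $E_g$ is stable on $M_g$ and the restriction of a stable homogeneous bundle to a general linear section remains stable; the same applies to $\varphi^* E'$.

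The main obstacle is the ensuing uniqueness statement, namely that $E$ is the unique stable bundle on $X$ with the Chern character and number of global sections of $E_g|_X$. For $n = 3$ this is exactly Mukai's theorem \cite[Theorem 0.9]{mukai-K3Fano}. For $n \ge 4$ two natural routes are available: (i) repeat Mukai's argument directly, deriving the necessary vanishings $\mathrm{Ext}^1(E,E) = \mathrm{Ext}^2(E,E) = 0$ from their counterparts on $M_g$ via the Koszul resolution of $X$ in $\PP(V_g)$; or (ii) cut $X$ by a generic linear subspace of codimension $n-3$ to reduce to the case $n = 3$, after verifying that the Mukai bundle restricts to the Mukai bundle of the threefold section. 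Either route must be handled case-by-case for $g=7,8,9,10$, since the bundle $E_g$ is different in each case; the spinor situation $g = 7$ (rank $5$ bundle on a section of $\SS_{10}$) is expected to be the most delicate, but the known behaviour of spinor bundles under linear sections of $\SS_{10}$ should make it tractable.
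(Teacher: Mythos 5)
Your proposal runs the same Mukai-reconstruction strategy as the paper: establish $\varphi^*E'\cong E$ for the restricted Mukai bundles, then recover the linear isomorphism from the induced map on sections. The paper's reduction is slightly different from your route (ii): rather than cutting down to threefolds, it restricts all the way to a general $K3$ surface section (where uniqueness of the Mukai bundle is Mukai's original theorem) and then lifts the isomorphism of bundles back up to $X$ by induction on $n$, invoking the argument of \cite[Proposition 5.1]{mukai-K3Fano} at each step; your route (i), reproving the vanishings from Koszul complexes on $X$, is also what the paper appeals to for the final reconstruction step. One place where your write-up is a bit too quick is the assertion that the bottom arrow of your commutative square ``must carry $M_g$ to itself because it identifies the two tautological Grassmannian embeddings.'' This is not formal: one must first know that $H^0(X,E^\vee)$ is naturally identified with $W_g$ (not some bigger space), and then that the resulting map $X\to\Gr(r_g,W_g)$ factors through $M_g$ compatibly with the linear automorphism of $\Gr$; both facts are exactly the content of Steps II and III in \cite[Section 2, (2.2)]{mukai-K3Fano}, which rest on Bott vanishing for the Koszul complex — and the paper explicitly notes that these vanishings become easier, not harder, when $n>3$. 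For $g=8$ your claim is automatic since $M_8=\Gr(2,6)$, but for $g=7,9,10$ the inclusion $M_g\subsetneq\Gr(r_g,W_g)$ is strict and the argument is genuinely needed. With that step spelled out (or cited), your proof matches the paper's.
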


\begin{proof}
Mukai first proves the corresponding statement for general
$K3$ surfaces, and then 
deduces it for Fano threefolds. 
It suffices to check that Mukai's argument for the latter point
extends to the case when $X$ and $X'$ have dimension  
larger than three. The key point 
is to make use of the bundle $E_g$, and its restrictions $F$ and $F'$ to $X$ and $X'$, respectively. 
These bundles must be stable, because they are stable on a general
surface linear section, thanks to the openness of
the stability condition. Moreover the 
restriction of $F$ to a general $K3$ section $S\subset X$, and the restriction of $F'$ to $\varphi(S)$ 
must be isomorphic (there is a unique such bundle on a general $K3$ surface of genus $g$, see \cite[Section 2, (2.2), Step I]{mukai-K3Fano}). 
This isomorphism then 
lifts to an isomorphism between $F$ and $\varphi^* (F')$
(this may be proved by induction on $n$, using almost verbatim the
argument given in the proof of \cite[Proposition 5.1]{mukai-K3Fano}). 
Eventually the discussion of \cite[Section 2]{mukai-K3Fano},
almost without change, shows that this isomorphism has to be induced by the linear action of some element 
of $G$.  Indeed the key cohomological arguments in \cite[Section 2, (2.2), Step II and Step III]{mukai-K3Fano}
are deduced from Bott's theorem applied to some Koszul complexes, and there are all the more vanishing to check that
the codimension is larger. So this last part is actually less demanding 
for $n> 3$ than it is for $n=3$, in the sense that all the required cohomological vanishing that are required have already been checked by Mukai when he discussed the latter case. 
\end{proof}

The case $n=1$ of Proposition~\ref{p:mukai} holds as well
\cite{mukai-symm0}, but requires a
different approach.
A different proof is also given in \cite[\S 4]{cd}, as well as variations
on Proposition~\ref{p:mukai}.
It is on the other hand well-known that Theorem~\ref{stab-Fano} also
holds for curves and surfaces linear sections of $M_g$.

\medskip There are three exceptional cases in the previous statements, in genus $7, 8, 9$, for which non 
trivial finite groups of symmetries show up unexpectedly. This is the second main result of this paper. 

\begin{theorem}\label{symetries}
For $g=7, 8, 9$, let $L\subset V_g$ be a generic linear subspace of codimension $k_g$.
Denote by $\Aut_G(L)$ the image in $\PGL(L)$ of the stabilizer $\Stab(L)\subset G$.  
Let $X=M_g\cap\PP(L)$ be the corresponding Mukai variety. 
Then 
$$\Aut(X)= \Aut_G(L)=\begin{cases} (\ZZ/2\ZZ)^2 & \mathit{for\;} g=7, \\ 
\CC^*\rtimes  ((\ZZ/3\ZZ)^2\rtimes \ZZ/2\ZZ) & \mathit{for\;} g=8, \\ 
 (\ZZ/2\ZZ)^4 & \mathit{for\;} g=9.\end{cases}$$ 
\end{theorem}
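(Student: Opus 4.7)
The plan is to establish Theorem~\ref{symetries} through an explicit identification of the generic stabilizer in each of the three cases. Since $\dim X = \dim M_g - k_g$ equals $6$, $5$, $4$ for $g = 7, 8, 9$ respectively, and hence is always at least $3$, Proposition~\ref{p:mukai} applies and yields the first equality $\Aut(X) = \Aut_G(L)$ automatically. The substantive task is then to determine $\Stab_{\bar G}(L) \subset \PGL(L)$ for a generic linear subspace $L \subset V_g$ of codimension $k_g$, and verify that it matches the groups $H_g$ listed in the statement.

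For each $g$, the approach is two-pronged. On the one hand, one exhibits a specific subspace $L_0$ of codimension $k_g$ preserved by a subgroup $H_g \subset \bar G$ of the claimed type, by producing natural commuting symmetries in $G$. The structure of $H_g$ dictates the search: for $g = 7$, the Klein four-group $(\ZZ/2\ZZ)^2$ should be built from two commuting involutions in $\Spin_{10}$ acting on $\Delta_+$; for $g = 8$, one combines a one-parameter subgroup of $\SL_6$ preserving $L_0$ with a finite subgroup of Hesse type in its normaliser; for $g = 9$, the elementary abelian $(\ZZ/2\ZZ)^4$ is assembled from the image of the $2$-torsion of a maximal torus of $\PSp_6$ together with involutions coming from the Weyl group. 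In each case $L_0$ is taken to be a well-chosen component of the joint eigenspaces of this subgroup acting on $V_g$.

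On the other hand, one performs an infinitesimal check at $L_0$: the kernel of the tangent map $\fg \to T_{L_0}\Gr = \Hom(L_0, V_g/L_0)$ of the orbit map is the Lie algebra of $\Stab_G(L_0)$, and one verifies that it has dimension equal to $\dim H_g$ (which is $0$ for $g = 7, 9$ and $1$ for $g = 8$). This confirms that $L_0$ lies in the generic locus of the Grassmannian, pins down the identity component of the stabilizer, and by upper-semicontinuity of the stabilizer dimension propagates to a Zariski-open subset of the Grassmannian.

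The final and most delicate step is to prove the exact equality $\Stab_{\bar G}(L_0) = H_g$, not merely the containment $H_g \subset \Stab_{\bar G}(L_0)$. With the identity component already pinned down, one must classify the remaining finite components of the stabilizer. This is the principal obstacle: $k_g$ is precisely the critical codimension at which prehomogeneity fails, so the generic stabilizer is larger than a naive dimension count predicts, and hidden finite symmetries cannot be ruled out by such considerations alone. The likely strategy is to exploit geometric invariants of $X = M_g \cap \PP(L_0)$ -- for instance, distinguished subvarieties detected by the Mukai bundle $E_g|_X$, or Pfaffian-type hypersurfaces in the $g = 8$ case -- to constrain the possible extra automorphisms, and to conclude by a direct algebraic verification case by case.
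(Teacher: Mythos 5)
The high-level plan — establish the lower bound $H_g \subset \Stab_{\bar G}(L_0)$ by exhibiting explicit elements, then show there is nothing more — is reasonable, and the reduction to $\Aut(X)=\Aut_G(L)$ via Proposition~\ref{p:mukai} is correct. But the proposal has a genuine gap precisely at the step you yourself flag as the ``principal obstacle'': it gives no working method to prove the upper bound $\Stab_{\bar G}(L)\subseteq H_g$. The infinitesimal check via the tangent map $\fg \to \Hom(L_0,V_g/L_0)$ controls only the Lie algebra of the stabilizer (hence its identity component), and upper-semicontinuity of the stabilizer dimension tells you nothing about the group of components, which is exactly where the $(\ZZ/2\ZZ)^2$, $(\ZZ/3\ZZ)^2\rtimes\ZZ/2\ZZ$, and $(\ZZ/2\ZZ)^4$ live. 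Appealing to ``geometric invariants of $X$'' or ``Pfaffian-type hypersurfaces'' is not a proof: one would still need to show that any automorphism respecting those invariants must be of the prescribed form, and you offer no mechanism for that.

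The paper's actual proof runs in the opposite direction: it first establishes an a priori upper bound by a systematic enumeration, for each $g$, of all nontrivial unipotent orbits and all strata of semisimple conjugacy classes of $G$, computing for each the dimension of the variety of $m$-dimensional subspaces it stabilizes (Proposition~\ref{dim-stable} plus Propositions~\ref{Jordan-tensor} and~\ref{Jordan-wedge}), and showing by dimension count that the generic $L$ of codimension $k_g$ is stabilized only by elements from a very short list of exceptional types (no unipotent elements; and only specific involutions or order-three elements). Only after this exhaustive classification does the paper identify which of those exceptional elements actually preserve a given general $L$: via the existence and uniqueness of the pair $(A,B)$ in genus $8$, the triple $(A_1,A_2,A_3)$ and the twelve Lagrangian pairs $(E,F)$ in genus $9$ (Propositions~\ref{reduction} and~\ref{reduction2}), and the triple of orthogonal non-degenerate planes $(A,B,C)$ in genus $7$ (Theorem~\ref{t:g7.concl}, Lemma~\ref{injectivity}). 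Without a substitute for this enumeration, your proposal establishes the containment $H_g\subseteq\Aut_G(L_0)$ for one particular $L_0$ but leaves open both the equality and the genericity claim. Also note a small detail: in genus $7$ the paper's stabilizer is generated not by two independent involutions chosen from scratch but by the three commuting involutions $t_U,t_V,t_W$ coming from a canonically determined orthogonal triple $(A,B,C)$, which satisfy $t_W=t_Ut_V$ (up to sign) and hence give exactly $(\ZZ/2\ZZ)^2$; your description omits why the triple is canonical and why there is no fourth one.
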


Moreover in all cases of the above statement we are able to identify
the fixed locus of the various automorphisms. Actually the genus $8$ case has already  
been discussed by Piontkowski and Van de Ven, who obtained by direct computations a less 
precise result \cite[Theorem 4.6]{pv}. 

Proving these two statements will require a careful analysis, the principles of which are explained in the 
next section. Geometrically, we conclude that 
the generic codimension four sections of $M_7$, and the generic codimension two sections 
of $M_9$ are stabilized by finitely many non trivial  involutions that we will describe 
explicitely. The case of $g=8$ is more specific and less unexpected since by duality, there is 
a plane cubic associated to a codimension three section of $M_8$, on which the action of $\Aut_G(L)$
can be read off. 

In a subsequent paper, we will interpret the previous Theorem in terms of 
$\theta$-representations, and show that our small groups of automorphisms can be seen as 
traces of complex reflection groups defined as generalized Weyl groups of some graded Lie algebras. 

\medskip\noindent {\it Acknowledgements.} We thank 
Yuri Prokhorov for his comments on the automorphisms of prime Fano
threefolds, and his permission to include his Appendix on the genus
twelve case. We also warmly thank the anonymous referees for their
careful reading, and their suggestions which allowed in particular to
drastically simplify the proof of the crucial Lemma~\ref{injectivity}.
We are still thankful to Christian Krattenthaler for his kind help
with some determinants that appeared in the proofs of
Propositions~\ref{Jordan-tensor} 
and \ref{Jordan-wedge} in the first version of this article, even
though the arguments have now been modified following a 
suggestion of a referee.

We acknowledge support from the ANR project FanoHK, grant
ANR-20-CE40-0023.

\section{Jordan decomposition miscellany}

\subsection{Stable subspaces}

Let $S\subset \bar{G}$ be the stabilizer of a generic subspace $L\subset V_g$. In the relevant
range of dimensions, we will prove that $S$ is trivial by proving that it contains no non trivial
semisimple or unipotent element. Indeed, if $g\in G$ stabilizes $L$, one can use its Jordan decomposition
$g=g_sg_n$ in $G$, and observe that since $g_s$ and $g_n$ are polynomials in $g$ (once considered
as elements of $\Hom(V_g)$), they must also stabilize $L$. 

This reduction will allow us to treat separately unipotent and semisimple elements. We will stratify
the set of those elements and control for each stratum the dimension of the stable subspaces.
Then a simple dimension count will imply that the generic $L$ has no stabilizer. This dimension count 
will be based on a straightforward bound for the variety of $m$-dimensional spaces stabilized by a 
unipotent or semisimple endomorphism, in terms of its Jordan type. 

\begin{prop}\label{dim-stable}
For $g\in \GL(V)$, let $G_m(g)\subset \G(m,V)$ denote the variety of $m$-dimensional subspaces 
which are stabilized by $g$.
{\setlength\leftmargini{\widthof{\it (2)\ }}
\begin{compactenum} 
\item \raisebox{0mm}[1em]
If $g$ is semisimple with eigenvalues of multiplicities $e_1, \ldots, e_p$, the dimension 
of $G_m(g)$ is bounded by the maximum of the 
$$f_1(e_1-f_1)+\cdots + f_p(e_p-f_p)$$ for $0\le f_i\le e_i$ and
$f_1+\ldots +f_p=m$.
\smallskip
\item If $g$ is unipotent with $b_k$ Jordan blocks of size $k$ for
  $1\le k\le q$,
  we let $\beta_p = b_q+\cdots+b_p$ for all $p=1,\ldots,q$,
and then the dimension 
of $G_m(g)$ is bounded by the maximum of the $$\gamma_1(\beta_1-\gamma_1)+\cdots +\gamma_q(\beta_q-\gamma_q),$$
taken over the sequences $\gamma_1\ge\cdots \ge \gamma_q\ge 0$ such that $\gamma_1+\cdots + \gamma_q=m $
and $\gamma_p\le \beta_p$ for any $p\le q$. 
\end{compactenum}
}
\end{prop}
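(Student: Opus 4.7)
For part (1), since $g$ is semisimple, I would decompose $V = \bigoplus_i E_i$ into its eigenspaces of dimensions $e_i$. Any $g$-stable subspace then splits as $L = \bigoplus_i (L \cap E_i)$. Stratifying $G_m(g)$ by the multi-type $(f_i) := (\dim L \cap E_i)$, subject to $0 \le f_i \le e_i$ and $\sum f_i = m$, each stratum is the product of Grassmannians $\prod_i \Gr(f_i, E_i)$ of dimension $\sum_i f_i(e_i - f_i)$. Maximising over valid types yields the claimed bound.

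For part (2), write $g = \Id + N$ with $N$ nilpotent, so $g$-stability is equivalent to $N$-stability, and consider the kernel filtration $V_p := \ker N^p$. Its jumps $\dim V_p - \dim V_{p-1} = \beta_p$ are read off from the Jordan data, and crucially $N$ induces injective maps $\bar N \colon V_p/V_{p-1} \hookrightarrow V_{p-1}/V_{p-2}$, whence $\beta_1 \ge \beta_2 \ge \cdots$. For an $N$-stable $L$ with $L_p := L \cap V_p$, the jumps $\gamma_p := \dim L_p - \dim L_{p-1}$ satisfy $\gamma_p \le \beta_p$ (from $L_p/L_{p-1} \hookrightarrow V_p/V_{p-1}$), $\gamma_p \le \gamma_{p-1}$ (from the induced injection $\bar N \colon L_p/L_{p-1} \hookrightarrow L_{p-1}/L_{p-2}$), and $\sum_p \gamma_p = m$. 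I would then stratify $G_m(g) = \bigsqcup_\gamma S_\gamma$ by this type $\gamma$.

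To bound $\dim S_\gamma \le \sum_p \gamma_p(\beta_p - \gamma_p)$, the plan is to consider the map $\Phi \colon S_\gamma \to \prod_p \Gr(\gamma_p, V_p/V_{p-1})$ sending $L$ to the tuple of associated graded pieces $\bar L_p := L_p/L_{p-1}$. The image of $\Phi$ lies in the closed subvariety cut out by the incidence conditions $\bar N(\bar L_p) \subseteq \bar L_{p-1}$, and the fiber over any $(\bar L_p)$ parameterises the lifts to an actual filtration $L_1 \subset \cdots \subset L_q$ satisfying $NL_p \subseteq L_{p-1}$. The aim is to show, level by level, that the combined incidence plus lifting data add up to the claimed bound: at each level the product Grassmannian contributes $\gamma_p(\beta_p - \gamma_p)$, and careful bookkeeping using the injectivity of $\bar N$ keeps the total bounded.

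The main obstacle is that neither the image of $\Phi$ nor its fiber can be bounded term by term: the incidence conditions may strictly reduce the image dimension at some level, and this reduction is compensated by extra freedom in the corresponding lift fiber (as can be checked in small examples with Jordan blocks of varying sizes). The clean way to handle this is via an affine paving of $S_\gamma$ in the spirit of the theory of Springer fibers: each stratum admits a cell decomposition whose cells have dimensions given exactly by the target formula. Alternatively, one may proceed by induction on the depth $q$ via the forgetful map $L \mapsto L_{q-1} \in S_{\gamma'}$ with $\gamma' = (\gamma_1,\ldots,\gamma_{q-1})$, combining the inductive bound on $\dim S_{\gamma'}$ with a direct analysis of the extension space that accounts in a coordinated way for the non-surjectivity of this map and the dimension of its generic fiber.
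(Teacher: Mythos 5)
Part (1) is exactly the paper's argument. Part (2) has a genuine gap: you set up the right stratification (by the jumps $\gamma_p$ of $L\cap\ker N^p$, equivalently by the Jordan type $(c_p)$ of $N|_L$) and write down the correct target formula, and you correctly diagnose that the map $\Phi$ to the product of Grassmannians of graded pieces cannot be bounded term by term. But at that point the proof stops: the ``affine paving in the spirit of Springer fibers'' is invoked without any justification that the cells of a given stratum have dimensions capped by $\sum_p\gamma_p(\beta_p-\gamma_p)$ (this is a nontrivial combinatorial statement, not a citation), and the inductive alternative is described only as a plan whose crucial bookkeeping step --- coordinating the non-surjectivity of the forgetful map with the generic fiber dimension --- is left unaddressed. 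This coordination is precisely the hard part.

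The paper avoids your obstacle by a single incidence-variety count with a \emph{different} auxiliary structure than the associated graded. Rather than the canonical filtration $L\cap\ker X^p$ and its graded pieces, it attaches to $L$ a tuple $(L_1,\dots,L_q)$ of \emph{transversal slices}: $L_p\subset\ker X^p$ of dimension $\gamma_p$, transverse to $\ker X^{p-1}$, with $XL_{p+1}\subset L_p$, so that $L=L_1\oplus\cdots\oplus L_q$. The point is that the space of all such tuples in $V$ has dimension $\sum_p c_p(x_p-\gamma_p)$ (with $x_p=\dim\ker X^p$), the fiber over a fixed $L$ (i.e.\ the same data chosen inside $L$) has dimension $\sum_p c_p(y_p-\gamma_p)$ (with $y_p=\dim\ker Y^p$, $Y=X|_L$), and the difference $\sum_p c_p(x_p-y_p)$ collapses, after expanding $x_p,y_p$ in terms of $b_k,c_k$, to exactly $\sum_p\gamma_p(\beta_p-\gamma_p)$. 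No cell decomposition or level-by-level compensation is needed; the cancellation between the non-surjectivity of $\Phi$ and the extra fiber freedom is handled globally by subtracting the fiber count from the total count. If you want to complete your proof along your own lines, the inductive route is salvageable, but you will essentially rediscover this incidence-variety computation; the affine-paving route requires a separate (and not easier) combinatorial argument to relate cell dimensions to the stated formula.
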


\proof If $g$ is semisimple and its eigenspace decomposition is $V=E_1\oplus \cdots \oplus E_p$, 
simply observe that a subspace $L$ stabilized by $g$ must be of the form $L=F_1\oplus \cdots \oplus F_p$
for $F_i\subset E_i$. 

If $g=id+X$ is unipotent, we construct a stable subspace $L$ such that the restriction $Y$ of $X$ to $L$ has $c_i$ blocks of size $i$, for $1\le i\le q$, as follows. We first choose a subspace $L_q$, of dimension $c_q$, transverse to $\ker(X^{q-1})$; this is possible for $c_q\le b_q$, and then it is an open, non empty condition.
Inductively, for any $1\le p<q$, we then choose a subspace $L_p$, of dimension $c_q+\cdots +c_{p}$, 
such that $XL_{p+1}\subset L_p\subset \ker(X^{p})$, transverse to $\ker(X^{p-1})$; the latter condition 
can be realized only when $c_q+\cdots +c_p\le b_q+\cdots +b_p$, and then it is an open, non empty condition. 
Finally we let $L=L_1+\cdots +L_q$. By the transversality conditions we imposed, this is a direct sum and 
therefore, the dimension of $L$ is $m=c_1+2c_2+\cdots +qc_q$. Moreover, by construction $L$ is stable with
the prescribed Jordan type, and every such $L$ can be obtained like that. In terms of the dimensions $x_p$ of $\ker(X^p)$, given by $x_p=\sum_{k=1}^q\min(k,p)b_k$, we can express the number of parameters for 
 $(L_1,\ldots ,L_q)$ as 
 $$\sum_{p=1}^q c_p(x_p-c_q-\cdots -c_p).$$
 Now, $L$ being given, we can choose $(L_1,\ldots ,L_q)$ inside $L$ subject to the same conditions as above;
 the number of parameters for $(L_1,\ldots ,L_q)$ is then given by the same formula, but with $x_p$ replaced
 by the dimension of $\ker(Y^p)$; which is $y_p=\sum_{k=1}^q\min(k,p)c_k$. Finally, the number of parameters for $L$ itself is the difference between these two numbers. Letting $\beta_p=b_q+\cdots +b_p$ and 
 $\gamma_p=c_q+\cdots +c_p$ we get the result announced. 
\qed 

\subsection{Jordan types of tensor products} 
In the sequel we will meet representations $W$ defined as tensor products $U\otimes V$, and we will 
need to control the Jordan type of an endomorphism $Z\in \Hom(W)$ defined from two endomorphisms 
$X\in \Hom(U)$ and $Y\in\Hom(V)$, as $Z=X\otimes \Id_V+\Id_U\otimes Y$. Let $u$, $v$ denote the dimensions
of $U$ and $V$. Recall that a nilpotent endomorphism is called {\it regular} if it has only one 
Jordan block. 

\begin{prop}\label{Jordan-tensor}
Suppose that $X\in \Hom(U)$ and $Y\in \Hom(V)$ are regular nilpotent, and that $u\ge v$.  Then $Z$ has $v$ 
Jordan blocks, of sizes $u-v+1, u-v+3,\ldots , u+v-3,u+v-1$.  
\end{prop}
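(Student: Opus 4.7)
The plan is to identify $Z$ as the action of a raising operator on a tensor product of irreducible $\mathfrak{sl}_2$-representations, and then read off the Jordan type from the Clebsch--Gordan decomposition.

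First, I would invoke the Jacobson--Morozov theorem: since $X$ is a regular nilpotent endomorphism of $U$, it can be completed to an $\mathfrak{sl}_2$-triple $(X, h_U, f_U)$ in $\mathfrak{sl}(U)$, and the resulting $\mathfrak{sl}_2$-action turns $U$ into the unique (up to isomorphism) irreducible $\mathfrak{sl}_2$-module $V_{u-1}$ of dimension $u$, with highest weight $u-1$ and with $X$ acting as the raising operator $e$. Likewise, $Y$ fits into a triple $(Y, h_V, f_V)$ making $V$ into the irreducible $\mathfrak{sl}_2$-module $V_{v-1}$ of dimension $v$, with $Y$ acting as $e$.

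Next, consider the diagonal $\mathfrak{sl}_2$-action on the tensor product $U\otimes V$ given by $(e,h,f)=(X\otimes \Id+\Id\otimes Y,\, h_U\otimes \Id+\Id\otimes h_V,\, f_U\otimes \Id+\Id\otimes f_V)$. By definition, the raising operator $e$ for this diagonal action is exactly our endomorphism $Z$. The Clebsch--Gordan formula gives
\[
V_{u-1}\otimes V_{v-1}\;\cong\; V_{u+v-2}\oplus V_{u+v-4}\oplus \cdots \oplus V_{u-v},
\]
a sum of $v$ irreducible summands (using $u\ge v$). On each irreducible summand $V_{n}$, the raising operator acts as a regular nilpotent, i.e.\ with a single Jordan block of size $\dim V_n=n+1$. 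Summing over the $v$ summands, $Z$ has Jordan blocks of sizes $u+v-1, u+v-3, \ldots, u-v+1$, which is exactly the claimed Jordan type.

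There is essentially no obstacle: both ingredients (Jacobson--Morozov applied to each factor, and Clebsch--Gordan for $\mathfrak{sl}_2$) are completely classical. If one wished to avoid representation-theoretic language, the same information is encoded in the explicit basis $(u_i\otimes v_j)_{0\le i<u,\, 0\le j<v}$ with $Xu_i=u_{i-1}$ and $Yv_j=v_{j-1}$: the kernel of $Z$ is spanned by the $v$ highest-weight vectors obtained by antidiagonal alternating sums, and their weights under $h_U\otimes\Id+\Id\otimes h_V$ give the lengths of the Jordan chains, recovering the same list of sizes.
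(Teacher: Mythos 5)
Your proof is correct and follows essentially the same route as the paper's: embed $X$ and $Y$ into $\mathfrak{sl}_2$-triples so that $U$ and $V$ become irreducible $\mathfrak{sl}_2$-modules with $X$, $Y$ as raising operators, apply the Clebsch--Gordan decomposition of $U\otimes V$, and read off one Jordan block from each irreducible summand. The explicit-basis sketch at the end is a nice optional supplement but not needed.
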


\proof Let us include $X$ and $Y$ into $\fsl_2$-triples, meaning that we find $H,X'\in\Hom(U)$ such that 
$$ [H,X]=2X, \quad [H,X']=-2X', \quad [X,X']=H,$$
and similarly for $Y$ (see \cite[Corollary 3.2.7]{cm}). In particular $(H,X,X')$ generate a subalgebra of $\Hom(U)$ which 
is isomorphic to $\fsl_2$, 
and the fact that $X$ is regular can be translated into the fact that $U$ is an irreducible module over this 
copy of $\fsl_2$; and similarly for $Y$ and $V$. 
If we denote by $M_k$
the unique irreducible $\fsl_2$-module of dimension $k+1$, the Clebsch-Gordan formula yields 
$$U\otimes V=M_{u-1}\otimes M_{v-1} = M_{u+v-2}\oplus M_{u+v-4}\oplus\cdots \oplus M_{u-v+2}\oplus M_{u-v}.$$
Each factor $M_k$ in this decomposition yields a Jordan block of size $k+1$ for $Z$, hence the claim. 
\qed

\medskip\noindent
We will also need a skew-symmetric version of Proposition \ref{Jordan-tensor}, where we consider the action of
an element $X\in \Hom(U)$ on $\wedge^2U$. We will denote the induced operator by $\wedge^2X$. 

\begin{prop}\label{Jordan-wedge}
Suppose that $X\in \Hom(U)$ is regular nilpotent. 
\begin{itemize}
\item If $u=2v$ is even,  $\wedge^2X$ has $v$ Jordan blocks, of sizes $1,5,\ldots 2u-3$.  
\item If $u=2v+1$ is odd, $\wedge^2X$ has $v$ Jordan blocks, of sizes $3,7,\ldots 2u-3$.  
\end{itemize}
\end{prop}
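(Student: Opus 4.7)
The plan is to imitate the $\fsl_2$-theoretic argument used for Proposition~\ref{Jordan-tensor}. I would first embed $X$ into an $\fsl_2$-triple $(H,X,X')$ inside $\Hom(U)$; the regularity of $X$ translates into $U$ being isomorphic to the irreducible $\fsl_2$-module $M_{u-1}$ of dimension $u$. Since $\wedge^2 X$ is exactly the action of the Lie algebra element $X$ on the $\fsl_2$-submodule $\wedge^2 U \subset U \otimes U$, its Jordan type is determined by the decomposition of $\wedge^2 M_{u-1}$ into irreducibles, each summand $M_k$ contributing one Jordan block of size $k+1$.

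The key step is therefore to establish the $\fsl_2$-module decomposition
\[
\wedge^2 M_n = M_{2n-2} \oplus M_{2n-6} \oplus M_{2n-10} \oplus \cdots,
\]
the sum terminating at $M_0$ when $n$ is odd and at $M_2$ when $n$ is even. By Clebsch--Gordan, $M_n \otimes M_n = \bigoplus_{j=0}^{n} M_{2n-2j}$ is multiplicity-free, so each irreducible summand sits entirely in either $S^2 M_n$ or $\wedge^2 M_n$. The remaining point is to check that the transposition of tensor factors acts on the copy of $M_{2n-2j}$ by $(-1)^j$; I would verify this either by writing a highest-weight vector of $M_{2n-2j}$ explicitly and inspecting its symmetry under the swap, or by computing the antisymmetric character $\tfrac{1}{2}\bigl(\chi_n(q)^2 - \chi_n(q^2)\bigr)$ and matching it with $\sum_{j\ \mathrm{odd}} \chi_{2n-2j}(q)$.

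Specializing $n = u-1$ then yields the advertised block sizes directly: for $u = 2v$ the decomposition runs from $M_0$ up to $M_{2u-4}$, producing blocks of sizes $1, 5, \ldots, 2u-3$, whereas for $u = 2v+1$ it runs from $M_2$ up to $M_{2u-4}$, producing blocks of sizes $3, 7, \ldots, 2u-3$. A dimension check against $\binom{u}{2}$ confirms the count in both cases. I expect the only mild obstacle to be the parity step in the second paragraph; once that is secured, everything else is purely formal, exactly as in the proof of Proposition~\ref{Jordan-tensor}.
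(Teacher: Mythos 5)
Your argument is correct and follows the same route as the paper: include $X$ in an $\fsl_2$-triple so that $U\simeq M_{u-1}$, decompose $\wedge^2 M_{u-1}$ into irreducibles, and read off one Jordan block of size $k+1$ per summand $M_k$. The only difference is that the paper simply cites $\wedge^2M_{u-1}=M_{2u-4}\oplus M_{2u-8}\oplus\cdots$ as a classical formula, whereas you also sketch its proof (Clebsch--Gordan plus the sign of the flip on each summand); that extra detail is sound but not needed for the published argument.
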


\proof As for the previous result this follows from the classical formula for the decomposition of 
$\wedge^2M_k$ into irreducible components: 
$$\wedge^2U=\wedge^2M_{u-1} = M_{2u-4}\oplus M_{2u-8}\oplus\cdots $$
Again each factor $M_k$ in this decomposition yields a Jordan block of size $k+1$ for $\wedge^2X$, 
hence the claim. 
\qed

\subsection{General strategy} 
\label{s:anciliary}
We shall proceed to a case by case study of the linear sections of the maximal Mukai varieties
$M_g$, for $g=7,8,9,10$. As we already explained, we will check that a general $L\subset V_g$, 
whose dimension $m$ belongs to the relevant range, cannot be
stabilized by any non-trivial unipotent or 
semisimple element in $\bar{G}$, except in the special
cases listed in Theorem~\ref{symetries}, for which our analysis will
show that there are no non-trivial unipotent elements in the
stabilizer, and provide a short list of possible semisimple elements
stabilizing $L$.
The discussion of these two cases will proceed along the
lines indicated in the following two paragraphs.

For each of the special cases of Theorem~\ref{symetries}, we provide
specific representation theoretic arguments to give a definitive
description of the stabilizer. These shall be introduced in due time.

\subsubsection{Unipotent elements}
\label{s:strategy-nilp}
Equivalently, we will check that a generic $L$ 
of dimension $m$ cannot be preserved by a non trivial nilpotent element in the Lie algebra $\fg$ 
of $G$. For this we will use the fact that $\fg$ has only finitely many nilpotent orbits $\cO$,
for each of which we can provide a representative $X$. Then $X$ acts on $V_g$ as a nilpotent 
operator, with a Jordan decomposition that we will determine; Proposition \ref{Jordan-tensor}
and Proposition \ref{Jordan-wedge} will be extremely useful for that. Using Proposition \ref{dim-stable},
we will then deduce the dimension $d_m(\mathcal{O})$ of the variety of $m$-dimensional subspaces 
of $V_g$ stabilized by $X$. The claim we are aiming for will then follow from the inequalities
\begin{equation}
\tag{$\bigstar$}\label{OKnilp}
\dim\mathcal{O}+d_m(\mathcal{O})<\dim \G(m,V_g) \qquad \forall
\mathcal{O}\ne\{0\}.
\end{equation}
In fact it is sufficient to prove the non-strict inequality in the
above condition: to see this we consider
the projection map $\pi: (L,X) \mapsto L$, 
defined on the incidence variety
$\mathcal I_{\O} \subset \Gr(m,V_g) \times \O$
parametrizing pairs $(L,X)$ such that $X.L=L$; 
our claim follows from the fact that the fibres of $\pi$ always have
dimension at least $1$,
which in turn comes from the observation that if $L$ is stabilized by
some $X \in \O$ then it is also stabilized by all multiples
$\lambda X$, $\lambda \in \CC^*$.

We shall use a Python script, described in more details below, 
to verify \eqref{OKnilp} for all nilpotent orbits $\O$:
for each $\O$, we exhaustively list all possible sequences
$(\gamma_i)$ in the notation of Proposition~\ref{dim-stable}, and thus
compute a bound for $d_m(\O)$.

In practice we proceed as follows:
we give the list of all nilpotent orbits, including their dimensions
and the Jordan decompositions for the actions of their members on the
representation $V_g$, and then we use the Python toolkit contained in
the file
\texttt{stab\_nilp.py} to verify the inequality \eqref{OKnilp} for
each 
of them~: our strategy is to exhaust all the possibilities listed in
Proposition~\ref{dim-stable}~(2) in order to compute the maximum. 
The data in our Python format together with the function
calls for the verification for genus $g$ is contained in the file
\texttt{g**.py}, where \texttt{**} is the value of $g$.
We have found that \eqref{OKnilp} always holds
(with equality in the sole case $g=8$ and $m=3$ or $12$, which is fine
as well as noted above), so that the generic $m$-plane $L$ in $V_g$
has no non-trivial unipotent element in its stabilizer if
$k_g \leq m \leq \dim(V_g)-k_g$.
We provide all the values computed by implementing
Proposition~\ref{dim-stable}~(2) in the output files
\texttt{g**\_output.txt}.
We emphasize however that reading through these output files may not
be the most convenient way of using our python tools, and that it is
arguably wiser to trust python on verifying \eqref{OKnilp} for each
case and eventually letting us know if everything was fine. This may
be done by setting the variable \texttt{synthetic} to \texttt{True} in
the files \texttt{g**.py}.


\subsubsection{Semisimple elements}
\label{strategy-smspl}

In order to check that a generic $L$ of dimension $m$ cannot be
preserved by a non trivial semisimple element of $G$, we will make a
similar dimension count. First observe that for $g\le 9$, the
representation $V_g$ has only weights of multiplicity one. This
implies that a generic semisimple element acts on $V_g$ with
multiplicity one eigenvalues; in particular, it stabilizes only
finitely many subspaces of $V_g$. These eigenvalues will be obtained
by including our semisimple element into some maximal torus $T\subset
G$ and making use of the weight decomposition of $V_g$ with respect to
this torus.

Positive dimensional families of stable subspaces will only occur when
some of the eigenvalues will coincide,
and we shall carefully classify the possible coincidences.
In effect we will consider the stratification of $G$ determined by
these coincidences, where each stratum parametrizes those elements of
$G$ for which a given set of coincidences happen, but no other.

Each type of coincidence amounts to some polynomial equations verified
by the values taken by the roots.
For a given set of coincidences, let $\mathcal{W}$ be the
locally closed subset of values solutions to the polynomial equations
characterizing our given coincidences, but to no other. Then the
corresponding stratum is the disjoint union 
$S = \coprod _{\mathbf w \in {\mathcal W}} \O_{\bf w}$
of the conjugacy classes in $G$ attached to the values $\mathbf w \in
{\mathcal W}$.

For each such stratum $S$
we will use Proposition~\ref{dim-stable} in order to compute
(or at least bound) the dimension $d_m(S)$ of the variety of
$m$-dimensional subspaces of $V_g$ stabilized by an element of $S$.
To show that a generic $m$-dimensional $L$ has trivial stabilizer, it
is sufficient to prove the inequalities
$$
\forall S\ne\{1\}: \quad
\dim (S)+d_m(S)<\dim \Gr(m,V_g).$$
In fact this criterion may be improved as follows.

\begin{lemma}\label{rk:cleverL}
Let $S = \coprod _{\mathbf w \in {\mathcal W}} \O_{\bf w}$ be a
stratum as above.
The stabilizer of a generic $m$-dimensional $L$ does not intersect $S$
as soon as
\begin{equation}\label{OK}
\tag{\leftthumbsup}
  \forall \mathbf w \in \mathcal W:\quad
  \dim (\O_{\mathbf w})+d_m(S)<\dim \Gr(m,V_g).
\end{equation}
\end{lemma}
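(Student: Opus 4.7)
The plan is to bound the dimension of the locus $\mathcal{L} \subset \Gr(m, V_g)$ of those $L$ whose stabilizer meets $S$. A priori $\mathcal{L}$ decomposes as a union $\mathcal{L} = \bigcup_{\mathbf{w} \in \mathcal{W}} \mathcal{L}_{\mathbf{w}}$ with $\mathcal{L}_{\mathbf{w}} := \{L : \Stab(L) \cap \mathcal{O}_{\mathbf{w}} \neq \emptyset\}$, and the crux will be to show that this union is no larger than any single term, so that the $\dim \mathcal{W}$ contribution disappears.

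The key observation I would prove first is that $\mathcal{L}_{\mathbf{w}}$ does not in fact depend on $\mathbf{w} \in \mathcal{W}$. Fix a maximal torus $T \subset G$; for any $t \in T \cap S$, the eigenspace decomposition of $V_g$ under $t$ amounts to partitioning the weights of $V_g$ according to which of them take the same value on $t$, and by definition of the stratum this partition depends only on $S$, not on $t$. Consequently the subset $U_T \subset \Gr(m,V_g)$ of subspaces stabilized by any (equivalently, every) element of $T \cap S$ depends only on $S$; and by conjugacy $\mathcal{L}_{\mathbf{w}} = G \cdot U_T$ regardless of the choice of $\mathbf{w}$.

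Having established this, for any fixed $\mathbf{w}$ I would bound the common dimension via the incidence variety $\mathcal{I}_{\mathbf{w}} = \{(L,g) \in \Gr(m,V_g) \times \mathcal{O}_{\mathbf{w}} : gL = L\}$. Its projection to $\mathcal{O}_{\mathbf{w}}$ has fibres $G_m(g)$ of dimension $d_m(S)$ for any $g \in \mathcal{O}_{\mathbf{w}}$, since up to $G$-conjugacy every such fibre is $U_T$. Thus $\dim \mathcal{I}_{\mathbf{w}} \leq \dim \mathcal{O}_{\mathbf{w}} + d_m(S)$, and the other projection gives $\dim \mathcal{L}_{\mathbf{w}} \leq \dim \mathcal{O}_{\mathbf{w}} + d_m(S)$. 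Applying the hypothesis \eqref{OK} one then concludes that $\dim \mathcal{L} < \dim \Gr(m,V_g)$, so that a generic $m$-plane avoids $\mathcal{L}$, as desired.

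The main obstacle is this first step: the independence of $\mathcal{L}_{\mathbf{w}}$ on $\mathbf{w}$ is precisely what saves the $\dim \mathcal{W}$ term that would otherwise contribute to the naive bound $\dim(S) + d_m(S)$ coming from the incidence variety over all of $S$. This saving is what makes the lemma sharper than the criterion $\dim(S) + d_m(S) < \dim \Gr(m,V_g)$ used in the purely nilpotent case, and is essential in the borderline applications treated elsewhere in the paper. Once the stability of $U_T$ within a stratum is in hand, the dimension count is routine.
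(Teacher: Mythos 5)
Your proof is correct and rests on the same key observation as the paper's: once $L$ is stabilized by some $\gamma$ in the stratum, varying the eigenvalues of $\gamma$ over $\mathcal W$ (keeping the eigenspaces fixed) produces a $\dim\mathcal W$-dimensional family of stabilizing elements, so that the ``$\dim\mathcal W$'' contribution cancels. You package this as ``$\mathcal L_{\mathbf w}$ is independent of $\mathbf w$,'' whereas the paper packages it as a lower bound $\dim\mathcal W$ on the generic fibre dimension of $\pi:\mathcal I_S\to\Gr(m,V_g)$; these are the same argument in different clothing. One small imprecision: the parenthetical ``(equivalently, every)'' in the definition of $U_T$ is not quite right, since $T\cap S$ also contains Weyl conjugates whose eigenvalues lie in a Weyl translate of $\mathcal W$ and hence have a different (though conjugate) weight partition; but this does not affect the conclusion, as $\mathcal L_{\mathbf w}=G\cdot U_T$ holds all the same because $N_G(T)\subset G$.
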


\begin{proof}
Consider the projection map $\pi: (L,\gamma) \mapsto L$ defined on the
incidence variety $\mathcal I_S \subset \Gr(m,V_g) \times S$
parametrizing pairs $(L,\gamma)$ such that $\gamma.L=L$.
For a pair $(L,\gamma)$ with $\gamma$ semisimple, the condition
$\gamma.L=L$ is equivalent to $L$ being a direct sum of subspaces of 
the eigenspaces of $\gamma$. But then the eigenvalues are irrelevant, so
for each pair $(L,\gamma) \in \mathcal I_S$
there is in fact a whole family of pairs $(L,\gamma_{\bf w}) \in
\mathcal I_S$ obtained by letting the eigenvalues of $\gamma$ move in
$\mathcal W(S)$.
Therefore the generic fibre of $\pi$ has dimension at least
$\dim \mathcal{W}$.
The claim follows since
$\dim (S) = \dim \mathcal{W} + \dim (\O_{\bf w})$
for any $\mathbf w \in \mathcal{W}$ (see Remark~\ref{l:dim-conjcl}
below).
\end{proof}

For each value of $\mathbf{w}$ the dimension of
$\O_{\bf w}$ may be computed by considering the adjoint action of
$G$ on its Lie algebra $\mathfrak{g}$, as follows.

\begin{remark}\label{l:dim-conjcl}
Let $\gamma \in G$. The conjugacy class of $\gamma$ has codimension
$\mathrm{rk}(G)+\delta$ in $G$, where $\delta$ is the number of roots
of $G$ taking the value $1$ on $\gamma$
(in particular, this codimension is constant along the strata as above).
Indeed, the tangent space at $1\in G$ to the stabilizer of
$\gamma$ for the adjoint action is
$\ker(\Ad(\gamma)-\mathrm{id}_{\mathfrak g})$.
\end{remark}

To proceed with this strategy, we have written an elementary piece of
Python code (included in the ancillary files)
to automatize the computation of the maximum in
Proposition~\ref{dim-stable} (which is done by trying all possible
cases), and the verification of the inequalities 
\eqref{OK} of Lemma~\ref{rk:cleverL}.
In practice we also reduce the cases to be checked by using the
following monotonicity property.

\begin{remark}\label{rk:monotone}
If an eigenspace decomposition is obtained from another one
by breaking the eigenspaces into smaller pieces, then the dimension
$d_m$ of the family of stable $m$-dimensional subspaces will be
larger for the decomposition with larger eigenspaces.
\end{remark}

It is also important to take into consideration the action
of the Weyl group on the roots of $G$ in order to reduce the various
cases to be checked.

To structure our analysis, we shall distinguish two kinds of
coincidences among the eigenvalues for the action of $\gamma \in G$ on
$V_g$, namely
(i) \emph{degenerations}, which are the relations gotten when a
root of $G$ takes the value $1$, and
(ii) \emph{collapsings} which are the
other coincidences between the weights of $V_g$. In particular
collapsings have no effect on the dimension of the conjugacy class.

We encode the decomposition of $V_g$ into eigenspaces as a partition of
$n=\dim(V_g)$, which we write as $[\mu_1^{a_1},\ldots,\mu_p^{a_p}]$ if
there are $a_i$ eigenspaces of dimension $\mu_i$ for $i=1,\ldots,p$.
When listing eigenvalues, we indicate the multiplicity between
parentheses. When we write ``\eqref{OK} holds for all
$m$'', we intend that it is so if $k_g \leq m \leq n-k_g$.
To help locate the exceptional cases, i.e., those which may give rise
to a non-trivial stabilizer for the general subspace, we indicate them
with a '\smallpencil' sign together with a label including the genus.

Our Python toolkit for the semi-simple case is in the file
\texttt{stab\_smspl.py}, and the specific data for the genus $g$ is in
the file \texttt{g**.py}. All values computed by the implementation of
Proposition~\ref{dim-stable}~(1) are provided in the output files 
\texttt{g**\_output.txt}.
However we advise again for the setting of the variable
\texttt{synthetic} in the files \texttt{g**.py} to \texttt{True} in
order to let python handle these outputs and only letting us know
those cases for which something noticeable has happened.

At some points in analyzing the
possible collapsings, it is convenient to use in addition
Macaulay2 to perform some elementary but tedious polynomial
manipulations: the relevant files in these cases are
\texttt{g**\_collapse.m2}.

\section{Genus $8$}
\setcounter{genus}{8}

We will start with the genus $8$ case, which is the easiest one since it only involves the projective linear group 
and its familiar action on the Grassmannian $M_8=\G(2,6)$, embedded in $\PP(V_8)=\PP(\wedge^2\CC^6)$ by the 
usual Pl{\"u}cker embedding. As outlined in our general strategy, we will analyse the possibility for a given 
unipotent or semisimple element of $G=\PSL_6$, to stabilize a generic subspace $L$ of $\wedge^2\CC^6$. Once this is done, 
we will conclude that the stabilizer $S_L$ of a generic $L$ of dimension $4$ to $11$ must be trivial. Moreover, 
if $L$ has dimension $3$ or $12$, its stabilizer can only contain  
involutions and order three elements of a very specific type, which will allow us to  determine completely 
the structure of $S_L$ and $\Aut_G(L)$. This will be the conclusion of a lengthy and laborious analysis that the 
reader may easily skip, in case she is ready to trust the authors.

\subsection{Unipotent elements}
There are eleven nilpotent orbits in $\fsl_6$, corresponding to the eleven partitions $\pi=(\pi_1\ge\cdots\ge \pi_m)$
of six. A representative of $\mathcal{O}_\pi$ is obtained by choosing a splitting $\CC^6=U_{1}\oplus
\cdots\oplus U_{m}$, where $U_i$ has dimension $\pi_i$, and letting $X_\pi=X_1+\cdots +X_m$, with $X_i$ a regular nilpotent element in 
$\fsl(U_{i})\subset\fsl_6$. The Jordan type of the action of $X_\pi$ on $\wedge^2\CC^6$ can then 
be obtained by decomposing 
$$\wedge^2\CC^6 = \Big(\bigoplus_{i=1}^m\wedge^2U_i\Big)\oplus \Big(\bigoplus_{j<k}U_j\otimes U_k\Big)$$
and applying Proposition \ref{Jordan-tensor} and Proposition \ref{Jordan-wedge}. The result is the 
following:
$$\begin{array}{lll}
Partition \qquad  & Dimension  \qquad 
& Jordan\; type\\
6  & 30 & 9,5,1\\ 5,1 & 28 & 7,5,3 \\ 4,2 & 26 & 5^2,3,1^2 \\ 4,1^2 & 24 & 5,4^2,1^2 \\ 
3^2 & 24 & 5,3^3,1 \\ 3,2,1 & 22 & 4,3^2,2^2,1 \\ 3,1^3 & 18 & 3^4,1^3 \\
2^3 & 18 & 3^3,1^6 \\ 2^2,1^2 & 16& 3,2^4,1^4 \\ 2,1^4 & 10 & 2^4,1^7 \\ 1^6  & 0& 1^{15}
\end{array}$$

Arguing as indicated in \S \ref{s:strategy-nilp}, we conclude that for
$m=3,\ldots,12$, the general $m$-dimensional linear subspace $L \subset
\wedge^2 \CC^6$ has no nilpotent element in its stabilizer.

\medskip\noindent {\it Remark}.
By semi-continuity, one could argue that it suffices to exhibit a single $L$ in each relevant dimension, 
whose stabilizer is made of semisimple elements only. 
An example of such an $L$ can be generated by decomposable tensors $e_p\wedge e_q$, for a set 
$I$ of pairs $(p,q)$. 
If $X$ stabilizes $L$, then for each $(p,q)$ in $I$, $X(e_p\wedge e_q)=Xe_p\wedge e_q+e_p\wedge Xe_q$ must be a combination of the $e_r\wedge e_s$, for $(r,s)$ in $I$. As a consequence, $Xe_p$ must be a linear combination of the $e_r$'s such that $(r,q)$ belongs to $I$. In other words, $X_{rp}=0$ as soon as  
there exists
a $q\ne r,p$ such that $(p,q)\in I$ but $(r,q)\notin I$. We can then look for configurations 
$I$ such that for any pair $p\ne r$, there exists $q$ satisfying this property; then any $X$ in the 
stabilizer of $L$ will have to be diagonal in our fixed basis.

A direct verification shows that we can choose $$I=\{(13),(16),(25),(34),(45)\},$$ and its
unions with $(26)$ and $(56)$. Since obviously the complements of these sets also satisfy 
the required property, we get a suitable $L$ for each dimension beteween $5$ and $10$.


\subsection{Semisimple elements}
Let $g$ be a semisimple element in $\GL_6$, with eigenvalues
$t_1,\ldots , t_6$. The codimension in $\GL_6$ of the orbit of $g$ is
\[
  \mathrm{codim} (\O_g) = \tsum_s n_s(g)^2,
\]
where the $n_s(g)$ are  the
multiplicities of the eigenvalues.
The Weyl group is the symmetric group $\Sn[6]$,  acting by
permutations on the $t_i$'s. 
The eigenvalues of the action of $g$ on $\wedge^2\CC^6$ are the
$t_it_j$ for $1\le i<j\le 6$, each with multiplicity $1$.  These
eigenvalues are not always distinct, and we shall discuss their
possible collapsings as explained in \S \ref{strategy-smspl}.

\subsubsection{Regular case} Assume $g$ is regular, i.e., its
eigenvalues $t_i$ are pairwise distinct. Then the conjugacy class of
$g$ has dimension $30$.
The eigenvalue $t_it_j$ can coincide with $t_kt_l$ only if the pairs
$(i,j)$ 
and $(k,l)$ are disjoint. As a consequence, each eigenspace $E_\mu$
for the action on $\wedge^2 \CC^6$ has dimension at most three.
We claim that then
\eqref{OK} holds unless $m=3$ or $12$ and there are at least three
$3$-dimensional $E_\mu$'s, as follows.
This is rather easy to see, but we may just as well use our python
arsenal: we (i) write down all partitions of $15$ as sums of integers
not larger than $3$, then (ii) select those elements in the list
maximal with respect to the partial order indicated in
Remark~\ref{rk:monotone}, and eventually (iii)
compute for all of them the maximum in
Proposition~\ref{dim-stable}~(1) for all $m=3,\ldots,12$.
We find out that \eqref{OK} holds unless $m=3$ or $12$, and in the
latter two cases we get the list of the possible cases in which it is
violated, 
from which we see that three $3$-dimensional eigenspaces are
needed. 
All this is transcripted in the output file
\texttt{g08\_smspl\_pyout.txt}.
(In practice one may skip stage (ii) without any trouble, but then the
output becomes artificially much longer and this is a little
unpleasant). 

Let us decide whether it is indeed possible to have three
$3$-dimensional eigenspaces. 
Up to acting with the Weyl group, we may assume that a first triple
collapsing involved is
\[
  12 = 34 = 56,
  \text{\ i.e.,\ }
  t_1t_2 = t_3t_4 = t_5t_6.
\]
Again up to the Weyl action, the second one is necessarily
either (i) $13 = 2* = **$,
or (ii) $13 = 5* = **$
(i.e., either $t_1t_3 = t_2t_a = t_bt_c$,
or $t_1t_3 = t_5t_a = t_bt_c$).
In case (i), we may end up with either
$13=24=**$ or $13=25=**$.
In case (ii), we may end up with 
$13=52=**$ which has already been found,
since $13=56=**$ is impossible as $56$ is already involved in the
first triple collapsing. The two possibilities left are thus
$13=24=56$ which in fact is impossible, and $13=25=46$.
The upshot is that the only possibility to have two triple collapsings
is
\[
  12 = 34 = 56
  \text{ and } 13=25=46.
\]
One finds (for instance using Macaulay2, see the ancillary files
listed in \S \ref{s:anciliary})
that the only possibility with all $t_i \neq 0$
and pairwise distinct is
\[
  (t_1,t_4,t_5)=(a,aj,aj^2)
  \text{ and }
  (t_2,t_3,t_6)=(b,bj^2,bj)
\]
with $a,b\in \CC^*$ and $j$ a primitive cubic root of $1$.
This involves exactly three triple collapsings, the third one being
$16=24=35$.
\gag\label{g8.1}

\subsubsection{Subregular case}
By this we mean that the eigenvalues of $g$ have multiplicities at
most equal to $2$.
We shall examine successively the three possibilities
$[2,1^4]$, $[2^2,1^2]$ and $[2^3]$.

\smallskip
A) $t_1\,(2),\ t_3,t_4,t_5,t_6\,(1)$.
By this we mean that $g$ has the double eigenvalue $t_1$, and the four
simple eigenvalues $t_3,t_4,t_5,t_6$, all five pairwise distinct.
Then the conjugacy class of $g$ has dimension $28$.
The eigenvalues for the representation are
\[
\begin{array}{rlrlrl}
  (i) &  t_1^2 \ (1) & (ii) & t_1t_3 \ (2)
                     & (iii) & t_3t_4, t_3t_5, t_3t_6 \ (1) \\
      &&& t_1t_4 \ (2) && t_4t_5, t_4t_6 \ (1) \\
      &&& t_1t_5 \ (2) && t_5t_6 \ (1) \\
      &&& t_1t_6 \ (2)
\end{array}
\]
The possible collapsings are the following. The eigenvalue $t_1^2$ is
necessarily distinct from those of type (ii), but may equal some of
type (iii). Eigenvalues of type (ii) are pairwise distinct, and each
may equal at most one of type (iii) (for instance, $t_1t_3$ may equal
only one among $t_4t_5, t_4t_6, t_5t_6$). Eigenvalues of type (iii)
may collapse at worst in pairs.

It follows that the eigenspaces in $\wedge^2\CC^6$ always have
dimension at most $3$, hence \eqref{OK} holds in all cases
(including $m=3$, since conjugacy classes now have dimension only
$28$)
by the analysis carried out in the regular case.

\smallskip
B) $t_1,t_3\,(2),\ t_5,t_6\,(1)$.
Then the conjugacy class of $g$ has dimension $26$ and the eigenvalues
in the representation are
\[
\begin{array}{rlrlrlrl}
  (i) & t_1^2 \ (1) & (ii) & t_1t_3 \ (4) & (iii) & t_1t_5 \ (2)
  & (iv) & t_5t_6 \ (1) \\
  & t_3^2 \ (1) &&&&  t_1t_6 \ (2) \\
  &&&&& t_3t_5 \ (2) \\
  &&&&& t_3t_6 \ (2) 
\end{array}
\]
The possible collapsings are the following. 
The eigenvalue $t_1^2$ may equal $t_3^2$, is different from $t_1t_3$,
may equal at most one type (iii) eigenvalue, or the type (iv)
eigenvalue. The eigenvalue $t_1t_3$ may only collapse with
$t_5t_6$. Type (iii) eigenvalues may at most collapse in pairs, and
all are distinct from $t_5t_6$.

Assume the type (iii) eigenvalues collapse in two pairs. Up to the
Weyl action, we may assume that $15=36$ and $16=35$.
Since $t_1,t_3,t_5,t_6$ are pairwise distinct and nonzero, one finds
that necessarily $t_3=-t_1$ and $t_6=-t_5$.
In this case we find the eigenvalues
\[
\begin{array}{rrrr}
  t_1^2 \ (2) & -t_1^2 \ (4) & t_1t_5 \ (4) & -t_5^2 \ (1) \\
              &&  -t_1t_5 \ (4)
\end{array}
\]
with the only possible further collapsing $t_1^2=-t_5^2$,
and \eqref{OK} always holds:
by monotonicity it suffices to consider the case with the largest
possible eigenspaces, which we have just found to be $[4^3,3]$, and
for which we verify \eqref{OK} for all $m=3,\ldots,12$ using our
python toolkit.

If $t_1t_3=t_5t_6$, there can be at most one collapsing among type
(iii) eigenvalues (if there are two, we must be in another regularity
stratum), say $t_1t_5=t_3t_6$, and then
$t_6=-t_1$ and $t_5=-t_3$,
so we find the eigenvalues
\[
\begin{array}{llrr}
  t_1^2 \ (1) & t_1t_3 \ (5) & -t_1t_3 \ (4) \\
  t_3^2 \ (1) &&  -t_1^2 \ (2) \\
  && -t_3^2 \ (2) 
\end{array}
\]
with the only possible further collapsing $t_1^2=-t_3^2$,
so that the case with the largest possible eigenspaces is
$[5,4,3^2]$. In this case \eqref{OK} holds for all $m=3,\ldots,12$,
and so does it for smaller eigenspaces by monotonicity.

In the other cases we have smaller eigenspaces, hence \eqref{OK} holds
in all these cases as well, again by monotonicity.

\smallskip
C) $t_1,t_2,t_3\,(2)$.
Then the conjugacy class of $g$ has dimension $24$ and the eigenvalues
in the representation are
\[
\begin{array}{rlrl}
  (i) & t_1^2 \ (1) & (ii) & t_1t_2 \ (4) \\
  & t_2^2 \ (1) && t_1t_3 \ (4) \\
  & t_3^2 \ (1) && t_2t_3 \ (4) \\
\end{array}
\]
The possible collapsings are the following.
There can be one simple collapsing between type (i) eigenvalues.
Type (ii) eigenvalues are always pairwise distinct. Each type (i)
eigenvalue may equal exactly one type (ii) eigenvalue.

If a collapsing between type (i) eigenvalues occurs, say
$t_1^2=t_2^2$, then the only other possible collapsing is
$t_3^2=t_1t_2$, in which case the eigenspaces give a partition
$[5,4^2,2]$, and \eqref{OK} holds for all $m$ in the relevant
range (again we consider the case $[5,4^2,2]$ using our python toolkit
and conclude by monotonicity).

If two collapsings between type (i) and (ii) eigenvalues occur,
say $t_1^2=t_2t_3$ and $t_2^2=t_1t_3$, then one finds that necessarily
\[
  (t_1,t_2,t_3)=(a, aj,aj^2)
\]
for some $a \in \CC^*$ and $j$ a primitive cubic root of unity,
so that the third relation $t_3^2=t_1t_2$ also holds, and we have
three $5$-dimensional eigenspaces.
\eqref{OK} holds only for $4 \leq m \leq 11$, whereas
for $m=3$ (resp.\ $12$) we find a $36$-dimensional family of stable
$3$-spaces by 
considering sums of three lines in the three $5$-dimensional
eigenspaces (resp.\ the dual configuration).
\gag\label{g8.2}

\subsubsection{Penultimate cases}
We now consider the cases in which $g$ has one eigenvalue of
multiplicity at least $3$, and in total at least three pairwise
distinct eigenvalues,
which amount to the following partitions of
$6$: $[3,1^3]$,
$[3,2,1]$,
and $[4,1,1]$.

\smallskip
A) $t_1\,(3),\ t_2,t_3,t_4\,(1)$.
Then the conjugacy class of $g$ has dimension $24$.
The eigenvalues for the representation are
\[
\begin{array}{rlrlrl}
  (i) & t_1^2 \ (3) & (ii) & t_1t_2 \ (3) & (iii) & t_2t_3 \ (1) \\
      &             &      & t_1t_3 \ (3) && t_2t_4 \ (1) \\
      &             &      & t_1t_4 \ (3) && t_3t_4 \ (1) \\
\end{array}
\]
Type (ii) eigenvalues are necessarily pairwise distinct, and so are
those of type (iii).
The eigenvalue $t_1^2$ must be distinct from those of type (ii), and
may equal at most one of type (iii).
Each eigenvalue of type (ii) may coincide with only one eigenvalue of
type (iii).
So at most the eigenspaces give the partition $[4^3,3]$ which has
already been considered in the subregular case, and
\eqref{OK} holds in all cases.

\smallskip
B) $t_1\,(3),\ t_2\,(2),\ t_3\,(1)$.
Then the conjugacy class of $g$ has dimension $22$.
The eigenvalues for the representation are
\[
\begin{array}{rlrl}
  (i) & t_1^2 \ (3) & (ii) & t_1t_2 \ (6) \\
      & t_2^2 \ (1) && t_1t_3 \ (3) \\
      &             && t_2t_3 \ (2)
\end{array}
\]
Type (ii) eigenvalues are necessarily pairwise distinct.
Each eigenvalue of type (i) may coincide with only one eigenvalue of
type (i), and $t_1^2$ may equal $t_2^2$.
If $t_1^2=t_2^2$, then no further collapsing is possible, and the
eigenspaces give the partition
$[6,4,3,2]$. In this case \eqref{OK} holds for all $m$.
Otherwise, it may happen at worst that $t_1^2=t_2t_3$ and
$t_2^2=t_1t_3$, in which case we would have the partition
$[6,5,4]$, and we find that \eqref{OK} holds in all cases.

\smallskip
C) $t_1\,(4),\ t_2,t_3\,(1)$.
Then the conjugacy class of $g$ has dimension $18$.
The eigenvalues for the representation are
\[
\begin{array}{rlrl}
  (i) & t_1^2 \ (6) & (ii) &  t_1t_2,t_1t_3 \ (4) \\
      &             &      & t_2t_3 \ (1) \\
\end{array}
\]
The only possible collapsing is $t_1^2=t_2t_3$, so at worst we get the
partition $[7,4^2]$, and \eqref{OK} holds in all cases.

\subsubsection{Remaining cases}
Eventually, we consider the cases in which $g$ has two distinct
eigenvalues $t_1,t_2$ of multiplicities $\mu_1,\mu_2$ respectively.
Then the eigenvalues for the representation are
\[
\begin{array}{lll}
  t_1^2 \ \binom {\mu_1} 2
  & t_2^2 \ \binom {\mu_2} 2
  & t_1t_2 \ (\mu_1\mu_2)
\end{array}
\]
with the only possible collapsing $t_1^2=t_2^2$.

\smallskip
A) If $(\mu_1,\mu_2)=(5,1)$, then the conjugacy class of $g$ has
dimension $10$, and the only possible partition is $[10,5]$.
\eqref{OK} holds for all $m$.

\smallskip
B) If $(\mu_1,\mu_2)=(4,2)$, then the conjugacy class of $g$ has
dimension $16$, and at worst we get the partition $[8,7]$.
\eqref{OK} holds for all $m$.

\smallskip
C) If $(\mu_1,\mu_2)=(3,3)$, then the conjugacy class of $g$ has
dimension $18$.

If $t_1^2 \neq t_2^2$ we get the partition
$[9,3^2]$. We obtain a $36$-dimensional family of pairs $(L,g)$ with
$g.L=L$ and $\dim (L)=3$ (resp.\ $12$) by considering those $L$
entirely contained 
in the $9$-dimensional eigenspace (resp.\ the dual configuration).
\gag\label{g8.3}

If  $t_1^2 = t_2^2$ we get the partition
$[9,6]$. We obtain a $37$-dimensional family of pairs $(L,g)$ with
$g.L=L$ and $\dim (L)=3$ (resp.\ $12$) by considering sums of a
$2$-plane in the 
$9$-dimensional eigenspace and a $1$-plane in the $6$-dimensional
one (resp.\ the dual configuration).
\gag\label{g8.4}
\\(We also get a $36$-dimensional family of pairs $(L,g)$ with $L$
entirely contained in the $9$-dimensional eigenspace (resp.\ in the
dual configuration, as a degenerate instance of case \ref{g8.3}
above). 

\subsubsection{Conclusion}
We have found that if $L$ is a generic $k$-plane with
$3<k<12$, then its stabilizer is trivial, whereas if $L$ is generic of
dimension $3$ or $12$, then its stabilizer may contain only elements as
described in the four cases
\ref{g8.1}, \ref{g8.2}, \ref{g8.3}, \ref{g8.4}
(see below for an explicit description).

\subsection{Codimension three} 

We consider in this section a general three-dimensional subspace
$L\subset \wedge^2\CC^6$
(later on we shall consider $L^\perp$ which has the dual size).
By the previous analysis, the stabilizer $S_L$ of $L$ in $\PSL_6$ (not $\SL_6$) is made of semisimple elements, and it can contain
\begin{enumerate}
\item at most a one dimensional family (not necessarily connected a
priori) of elements with two eigenvalues of multiplicity three, such
that if $A$ and $B$ denote the two eigenspaces, then $L\subset
A\otimes B$ (case \ref{g8.3});
\item at most a one dimensional family of involutions with two
eigenspaces $E,F$ of dimension three, such that $L$ is the sum of a
line $L_1\subset \wedge^2E\oplus \wedge^2F$ and a plane $L_2\subset
E\otimes F$
(case \ref{g8.4});
\item at most a one dimensional family of elements with eigenvalues
$a, ja, j^2a, a^{-1}, ja^{-1}, j^2a^{-1}$ for some $a\in \CC^*$, with
$a^6\ne 1$; then the induced action on $\wedge^2\CC^6$ has the
eigenvalues $1,j,j^2$ with multiplicity three, and $L$ is the sum of
three lines in those eigenspaces
(case \ref{g8.1});
\item a finite number of elements with eigenvalues $1,j,j^2$, of
multiplicity two; then the induced action on $\wedge^2\CC^6$ has the
eigenvalues $1,j,j^2$ with multiplicity five, and $L$ is the sum of
three lines in those eigenspaces
(case \ref{g8.2}).
\end{enumerate}
We will call these elements of type (1) to (4).

\medskip\noindent {\bf Type (1).} Let us first explain the origin of the first family. 

\begin{lemma} There exists a unique pair $(A,B)$ of transverse three-planes in $\CC^6$ such that 
$L\subset A\otimes B\subset \wedge^2\CC^6$.
\end{lemma}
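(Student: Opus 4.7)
The plan is to establish existence and uniqueness of the pair separately.

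For existence, I introduce the incidence variety
$$
\mathcal{I} = \{(L, \{A, B\}) : A, B \subset \CC^6 \text{ are transverse 3-planes},\ L \subset A \otimes B\},
$$
which is fibered over the 18-dimensional space of unordered transverse pairs $\{A, B\}$ with fibres $\Gr(3, A \otimes B) \cong \Gr(3, 9)$ of dimension $18$, so that $\dim \mathcal{I} = 36 = \dim \Gr(3, \wedge^2\CC^6)$. For the first projection $\pi_1 \colon \mathcal I \to \Gr(3, \wedge^2\CC^6)$ to be dominant, so that the generic $L$ admits at least one such pair, it is enough to exhibit a single $L$ whose fibre is finite and non-empty. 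One may take $L = \langle e_1\wedge e_4, e_2\wedge e_5, e_3\wedge e_6\rangle$: any $\{A, B\}$ with $L \subset A\otimes B$ must express each generator $e_i \wedge e_{i+3}$ as a decomposable element of $A\otimes B$, necessarily of the form $a \wedge b$ with $a \in A$ and $b \in B$; this forces each pair $\{e_i, e_{i+3}\}$ to be split between $A$ and $B$, yielding exactly four unordered pairs.

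For uniqueness at a generic $L$, my strategy is to reconstruct $\{A, B\}$ intrinsically from $L$. The Pfaffian on $\wedge^2\CC^6$ restricts to a cubic form on $L$ whose vanishing locus $\mathcal{C} \subset \PP(L)$ is a smooth plane cubic of genus one for generic $L$, parametrizing the $2$-forms in $L$ of rank exactly $4$. For $[\omega] \in \mathcal{C}$, the skew form $\omega$ has a canonical $2$-dimensional kernel $K_\omega \subset \CC^6$. If $L \subset A\otimes B$, then $K_\omega$ splits as a direct sum $L_A^\omega \oplus L_B^\omega$ with $L_A^\omega \subset A$ and $L_B^\omega \subset B$ both lines, which yields morphisms $\mathcal{C} \to \PP(A)$ and $\mathcal{C} \to \PP(B)$. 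The $3$-planes $A$ and $B$ are then recovered as the linear spans in $\CC^6$ of the image curves of these morphisms.

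The main obstacle is to justify that the image curves in $\PP(A)$ and $\PP(B)$ are linearly non-degenerate, i.e., they span the whole of $A$ and $B$ rather than proper subspaces. For generic $L$ this can be checked by writing $\omega \in L \subset A\otimes B$ as a $3\times 3$ matrix $C$ depending linearly on coordinates on $L$: the kernel map is then given by any non-zero column of $\mathrm{adj}(C)$, i.e., by quadratic sections on $\PP(L)$, and one verifies directly via a tangent-space calculation at a generic $[\omega] \in \mathcal{C}$ that no linear relation among these sections holds modulo the cubic defining $\mathcal{C}$. Combined with the dimension count, this establishes that $\pi_1$ is generically of degree one, yielding existence and uniqueness of $\{A, B\}$.
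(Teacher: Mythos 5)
Your proposal has the right overall shape (incidence variety of the right dimension, dominance, then an intrinsic recovery of $\{A,B\}$), but both halves have genuine gaps.

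\textbf{Existence.} Your chosen $L_0 = \langle e_1\wedge e_4, e_2\wedge e_5, e_3\wedge e_6\rangle$ does \emph{not} have a finite fibre. The condition $e_i\wedge e_{i+3}\in A\otimes B$ only forces the plane $\langle e_i, e_{i+3}\rangle$ to be the direct sum of a line in $A$ and a line in $B$; it does not force those lines to be the coordinate axes $\langle e_i\rangle$, $\langle e_{i+3}\rangle$. Thus for each $i$ you may choose any unordered pair of distinct lines in $\langle e_i, e_{i+3}\rangle$ (a $2$-parameter family), and then $A$ (resp.\ $B$) is the sum of one line from each pair; transversality of $A$ and $B$ is automatic. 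The fibre over $L_0$ is therefore a $4$-to-$1$ cover of a $6$-dimensional space, not a finite set of four pairs, and your dominance argument collapses. The paper instead shows that the differential of $\pi:Z\to\G(3,\wedge^2\CC^6)$ is an isomorphism at an explicit normal form $L$ (the $p,q,r$ of Lemma~\ref{normal}, under the condition that $x^3,y^3,z^3$ be pairwise distinct); this gives dominance and generic finiteness at once.

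\textbf{Uniqueness.} The reconstruction you describe is not intrinsic as written. The kernel $K_\omega$ is canonical, giving a morphism $\mathcal{C}\to\G(2,\CC^6)$; but the splitting $K_\omega = L_A^\omega\oplus L_B^\omega$ uses $A,B$ as input, so the morphisms $\mathcal{C}\to\PP(A)$ and $\mathcal{C}\to\PP(B)$ are only defined once a pair is fixed. To turn this into a uniqueness argument you would have to show that the ruled surface swept out by the $K_\omega$'s admits exactly one pair of planar directrices --- a further nontrivial claim, which your proposal does not address and which is exactly where a putative second pair $(A',B')$ could hide by producing a different splitting of each $K_\omega$. The paper's uniqueness argument is quite different and sidesteps this: the earlier semisimple analysis shows the connected stabilizer $S_L^0$ of a generic three-plane is at most one-dimensional, and once a valid pair $(A,B)$ exists, the one-parameter group $s\Id_A+s^{-1}\Id_B$ lies in $S_L^0$ and hence equals it; the pair $\{A,B\}$ is then recovered as the eigenspace decomposition of $S_L^0$, hence is unique.
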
 

\begin{proof} Observe that once we know that the pair $(A,B)$ does exist, the elements $g_s=s\Id_A+s^{-1}\Id_B$ belong to 
the connected component $S_L^0$ of $S_L$. Since we know that this connected component is at most one dimensional, it must 
coincide with the set of those elements. In particular the pair $(A,B)$ must be unique. 

\smallskip 
In order to prove the existence of the pair $(A,B)$, we use the following approach. Let $U\subset \G(3,6)\times \G(3,6)$ be the open subset 
of pairs of transverse planes $(A,B)$, and $ Z\rightarrow U$ be the relative Grassmannian with fiber $\G(3,A\otimes B)$.
The dimension of $Z$ is $36$. We need to prove that the natural map $\pi: Z\rightarrow \G(3, \wedge^2\CC^6)$ that forgets the pair
$(A,B)$ is dominant, hence generically finite since the dimensions coincide. For this we will prove that the differential 
of $\pi$ is an isomorphism at the general point $z=(A,B,L)$ of $Z$. Recall that the tangent space to a Grassmannian 
is the bundle of morphisms from the tautological to the quotient vector bundle. We readily deduce that 
the tangent space of $Z$ at $z$ fits into the relative exact sequence 
$$0\to \Hom(L,A\otimes B/L)\to  T_zZ\to  \Hom(A,B)\oplus \Hom(B,A)\to  0.$$
Moreover $\wedge^2\CC^6=\wedge^2A\oplus A\otimes B\oplus\wedge^2B$, so 
$$T_L\G(3, \wedge^2\CC^6)\simeq \Hom(L,\wedge^2A\oplus (A\otimes B/L)\oplus\wedge^2B).$$
We are therefore reduced to showing that the morphisms
$$\sigma : \Hom(A,B)\longrightarrow \Hom(L,\wedge^2B), \qquad \tau : \Hom(B,A)\longrightarrow \Hom(L,\wedge^2B)$$
are isomorphisms, where $\sigma$ is defined by sending $u\in
\Hom(A,B)$ to the composition
$$L\hookrightarrow A\otimes B\xrightarrow{u\otimes \Id_B}
B\otimes B\longrightarrow \wedge^2B,$$
and $\tau$ is defined similarly. The following Lemma therefore concludes the proof of the previous one. 
\end{proof}

\begin{lemma}\label{normal} Let $a_1,a_2,a_3$ be some basis of $A$, 
and $b_1,b_2,b_3$ some basis of $B$. Consider the $3$-space $L$ generated by 
\[
\begin{array}{rl}
p &= xa_1\otimes b_1+ya_2\otimes b_2+za_3\otimes b_3,  \\ 
q &= za_1\otimes b_2+xa_2\otimes b_3+ya_3\otimes b_1,  \\
r &= ya_1\otimes b_3+za_2\otimes b_1+xa_3\otimes b_2,
\end{array}
\]
for $[x,y,z]$ in $\PP^2$ such that $x^3, y^3, z^3$ are pairwise distinct. 
Then $\pi$ is {\'e}tale at $z=(A,B,L)$.
\end{lemma}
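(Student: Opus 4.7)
The plan is to reduce the étaleness of $\pi$ at $z=(A,B,L)$ to the invertibility of the maps $\sigma$ and $\tau$ introduced in the proof of the previous lemma, to observe that $\tau$ is equivalent to $\sigma$ via the involution $\iota$ of $\CC^6$ swapping the bases $(a_i)$ and $(b_j)$ --- this involution sends $p\mapsto p$, $q\mapsto r$, $r\mapsto q$ and therefore preserves $L$ while exchanging $\wedge^2 A \leftrightarrow \wedge^2 B$ and $\Hom(A,B) \leftrightarrow \Hom(B,A)$ by conjugation --- and then to compute the $9 \times 9$ matrix of $\sigma$ explicitly. Combining the exact sequence for $T_z Z$ with the decomposition
\[
T_L \Gr(3, \wedge^2\CC^6) \simeq \Hom\bigl(L,\; \wedge^2 A \oplus (A\otimes B/L) \oplus \wedge^2 B\bigr),
\]
the differential $d\pi_z$ is the identity on the $\Hom(L, A\otimes B/L)$ summand and has $\sigma$ and $\tau$ as its remaining components; since all source and target spaces of $\sigma, \tau$ have dimension $9$, it is enough to check that $\sigma$ is invertible.

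To compute $\sigma$, I would write $u \in \Hom(A,B)$ as a matrix $(u_{ij})$ with respect to $(a_i)$ and $(b_j)$, and expand $\sigma(u)(p)$, $\sigma(u)(q)$, $\sigma(u)(r)$ in the basis $\{b_i \wedge b_j\}_{i<j}$ of $\wedge^2 B$; this produces nine linear forms in the nine unknowns $u_{ij}$. The decisive observation is that the variables split into three cyclic triples
\[
(u_{11}, u_{23}, u_{32}),\quad (u_{22}, u_{13}, u_{31}),\quad (u_{33}, u_{12}, u_{21}),
\]
each of which enters exactly three of the nine output coordinates. The matrix of $\sigma$ is therefore block-diagonal with three $3 \times 3$ blocks, and a short computation gives their determinants as, up to sign, $y^3 - z^3$, $x^3 - z^3$ and $x^3 - y^3$.

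Under the assumption that $x^3, y^3, z^3$ are pairwise distinct, these three determinants are all nonzero, so $\sigma$ (and hence $\tau$) is invertible and $\pi$ is étale at $z$, as claimed. The one non-routine step is spotting the block structure; once the right grouping of the variables is made, the determinant computation is mechanical and the expected cyclic symmetry in $(x,y,z)$ is manifest.
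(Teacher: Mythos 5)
Your proof is correct and follows essentially the same route as the paper: reduce étaleness to the invertibility of $\sigma$ (and $\tau$), observe the block-diagonal structure with blocks indexed by the cyclic triples $(u_{11},u_{23},u_{32})$, $(u_{22},u_{13},u_{31})$, $(u_{33},u_{12},u_{21})$, and compute the determinants $y^3-z^3$, $x^3-z^3$, $x^3-y^3$. The one small refinement you add is the explicit involution $\iota$ swapping $(a_i)$ and $(b_i)$ (which sends $p\mapsto -p$, $q\mapsto -r$, $r\mapsto -q$, hence preserves $L$) to reduce the invertibility of $\tau$ to that of $\sigma$; the paper treats only $\sigma$ and leaves this symmetry implicit.
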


In particular, $L$ is generic in $\G(3,\wedge^2\CC^6)$. As a consequence it is also generic in $\G(3,A\otimes B)$. 

\begin{proof} We make an explicit computation.
The map that we must check to be an isomorphism sends $u\in  \Hom(A,B)$ to the morphism from $L$ to $\wedge^2B$ defined by 
$$\begin{array}{rcl}
p & \mapsto & xu(a_1)\wedge b_1+yu(a_2)\wedge b_2+zu(a_3)\wedge b_3,  \\ 
q & \mapsto & zu(a_1)\wedge b_2+xu(a_2)\wedge b_3+yu(a_3)\wedge b_1,  \\
r & \mapsto & yu(a_1)\wedge b_3+zu(a_2)\wedge b_1+xu(a_3)\wedge b_2.
\end{array}$$
Denote $u(a_i)=\sum u_{ij}b_j$ and suppose that $u$ is mapped to the zero morphism. Then we get nine equations on the $u_{ij}$'s, 
which split into three subsystems of size three. For instance, the three equations involving $u_{11}, u_{23}, u_{32}$ are 
$$yu_{23}-zu_{32}=zu_{11}-yu_{32}=yu_{11}-zu_{23}=0,$$
and this system is invertible if and only if $y^3-z^3\ne 0$. This implies the claim.
\end{proof}

\medskip\noindent {\bf Type (2).} 
Now consider an element $h$ of type (2) in $S_L$. Since any $g_s=s\Id_A+s^{-1}\Id_B$ belongs to the connected component of $S_L$, 
the product $g_sh$ must remain of type (2); in particular it must be an involution. We deduce that $h$ must exchange $A$ and $B$. 

We claim that such elements do exist. In particular the natural map $S_L\rightarrow\Sn[2]$, the permutation group 
of the pair $A,B$, is surjective. In order to see this, we may suppose that $L$ is given in the normal form of Lemma \ref{normal}.
Then we can exhibit the following 
type (2) transformations preserving $L$: just fix an integer $k$ (modulo $3$), choose  $\zeta$ some root of unity, 
and let $h(a_i)=\zeta^ib_{i+k}$ (where indices are computed modulo $3$).

\medskip\noindent {\bf Type (3).} 
Consider an element $g$ in $S_L$ whose eigenvalues are $a, ja, j^2a, a^{-1}, ja^{-1}, j^2a^{-1}$
for some $a\in \CC^*$. Denote by $e_1,e_2,e_3,f_1,f_2,f_3$ a basis of eigenvectors for these eigenvalues.   
The action of $g$ on $\wedge^2\CC^6$ admits the eigenvalues $1,j,j^2$, each with multiplicity $3$ (except for special
values of $a$), and $L$ must be a direct sum of lines $L_1$, $L'_1$, $L''_1$ contained in the associated eigenspaces,
that is 
$$L_1\subset \langle e_1\wedge f_1, e_2\wedge f_3, e_3\wedge f_2\rangle ,\qquad 
L'_1\subset \langle e_1\wedge f_2, e_2\wedge f_1, e_3\wedge f_3\rangle ,$$
$$L''_1\subset \langle e_1\wedge f_3, e_2\wedge f_2, e_3\wedge f_1\rangle .$$
In particular $L$ is contained in $\langle e_1,e_2,e_3\rangle\otimes  \langle f_1,f_2,f_3\rangle$, and by the previous Lemma
we may suppose that $A=\langle e_1,e_2,e_3\rangle$ and $B=  \langle f_1,f_2,f_3\rangle$. In particular the cube of $g$ 
acts on $A$ and $B$ by homotheties, and belongs to the connected component of $S_L$.

\medskip\noindent {\bf Type (4).} Finally, consider an element $h$ of type (4). Then $g_shg_s^{-1}$ is also of type (4). 
Since we have only finitely many such elements in $S_L$, and $s$ varies in a connected set, we conclude that necessarily, 
$g_shg_s^{-1}=h$. In particular the eigenspaces $C_1,C_2,C_3$ of $h$ are direct sums of their intersections with $A$ and $B$. 
We claim that each of them must be the sum of a line in $A$ and a line in $B$. Indeed, if this were not the case, we would 
be able to deduce that, up to some permutation of indices, $C_1$ is contained in $A$, $C_3$ is contained in $B$, and $C_2$ is the 
sum of a line $a\subset A$ with a line $b\subset B$. Recall that $L$ must be the direct sum of lines $L_1\subset \wedge^2C_1\oplus 
C_2\otimes C_3$, $L'_1\subset \wedge^2C_2\oplus C_1\otimes C_3$, $L''_1\subset \wedge^2C_3\oplus 
C_1\otimes C_2$. Since we also know that $L\subset A\otimes B$, we would deduce that in fact $L_1=a\otimes C_3$, $L''_1=C_1\otimes b$ 
and $L'_1\otimes a\otimes b\oplus C_1\otimes C_3$. Counting dimensions, we would conclude that $L$ cannot be general. 

So the conclusion is that $h$ preserves $A$ and $B$, and its
restriction to  each of these has eigenvalues $1,j,j^2$. In particular its cube 
has to belong to $S_L^0$.

\medskip\noindent {\bf Synthesis.} By the previous analysis, the
action of $S_L$ on $\PP (L)$ induces an injective morphism of
$S_L/S^0_L$ into $\PSL(L)$. The induced action on $\PP(L)$ preserves
the genus one curve $\mathcal{C}$ cut out on $\PP (L)$ by the Pfaffian
hypersurface.

Let $T_L\subset S_L$ denote the subgroup of elements sending $A$ and
$B$ to themselves, hence of type (1), (3) or (4).  We have seen that
the image of $T_L$ in $\PSL{}(L)$ consists of regular semisimple
elements whose cubes are all trivial; this implies that they act on
$\mathcal{C}$ (which is a general curve of genus $1$) by translation
by some $3$-torsion point.

Elements of type (2), that is, in $S_L-T_L$, induce involutions of
$\mathcal{C}$ that must be point reflections across an inflection
point. Indeed, recall that an element of type (2) in $S_L$ has
eigenspaces $E,F$ of dimension three, such that $L$ is the sum of a
line $L_1\subset \wedge^2E\oplus \wedge^2F$ and a plane $L_2\subset
E\otimes F$.  In this situation, $L_1$ is generated by a two-form
$\lambda$ of rank four, hence degenerate, and moreover
$L_1 \wedge L_2\wedge L_2=0$.
Therefore, if $\mu, \nu$ are
two-forms that generate $L_2$, the Pfaffian of a general
element of $L$ writes
$$\Pf(x\lambda+y\mu+z\nu)=
3x^2\lambda^2\wedge (y\mu+z\nu)
+\Pf(y\mu+z\nu).$$  
This shows that $p=[1,0,0]=[L_1]$ is an inflection point of
$\mathcal{C}$. 
Morever, the line $x=0$ cuts (in general) the curve $\mathcal{C}$ at
three points $q_1, q_2, q_3$, such that the tangent line to
$\mathcal{C}$ at each $q_i$ passes through $p$,
which means that the degree zero divisors $p-q_i$ are $2$-torsion.
Therefore $q_1, q_2, q_3$ are fixed points of the point reflection
across $p$.
The upshot is that the involution of $\PP(L)$
associated to the decomposition $L = L_1 \oplus L_2$,
once restricted to $\mathcal{C}$, is just the symmetry
with respect to the inflection point $p=[L_1]$.

So far, we have proved that the image $S_L/S_L^0$ injects in the
subgroup $(\ZZ/3\ZZ)^2\rtimes (\ZZ/2\ZZ)$ of automorphisms of
$\mathcal{C}$ of translations by an element of $3$-torsion and
point reflections across an inflection point. Let us show that this
injection is in fact an isomorphism.
In our analysis of elements of type (2) above, we have already seen
that the image of $S_L/S_L^0$ indeed contains elements of order
$2$. It will thus be sufficient to prove that all order $3$
translations are in the image.

To do so we may assume that $L$ is given in the normal form of
Lemma~\ref{normal}. Then the curve $\mathcal{C}$ is defined by the
cubic polynomial
\[
  \Pf(up+vq+wr)
  = \begin{vmatrix}
    ux & vz & wy \\
    wz & uy & vx \\
    vy & wx & uz
  \end{vmatrix}
  = xyz (u^3+v^3+w^3) -(x^3+y^3+z^3) uvw,
\]
and
its group $(\ZZ/3\ZZ)^2$ of translations by $3$-torsion points is
generated by the two transformations
\[
  s: (u:v:w) \mapsto (u:jv:jw)
  \quad \text{and} \quad
  t: (u:v:w) \mapsto (w:u:v)
\]
(we leave this as an entertaining exercise; hints may be found in
\cite[\S 3.1]{dolgachev}).
The translation $s$ is realized by the type (3) element $g$ acting
diagonally on the basis 
$(a_1,a_2,a_3,b_1,b_2,b_3)$ with eigenvalues
$\alpha, j\alpha, j^2\alpha, \alpha^{-1}, j\alpha^{-1}, j^2\alpha^{-1}$
for some $\alpha\in \CC^*$.
The translation $t$ is realized by the type (4) element $g$ sending
$a_i$ to $a_{i+1}$ and 
$b_i$ to $b_{i-1}$ for $i=1,2,3$, with indices computed modulo $3$.

We have thus proved that the image of $S_L/S_L^0$ contains a set of
generators of $(\ZZ/3\ZZ)^2\rtimes (\ZZ/2\ZZ)$, and
in conclusion we have proved the following.

\begin{prop} $\Aut_G(L)\simeq S_L/S_L^0\simeq  (\ZZ/3\ZZ)^2\rtimes (\ZZ/2\ZZ)$. \end{prop}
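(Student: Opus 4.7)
The plan is to organize the already-assembled material around the action on the Pfaffian cubic curve $\mathcal{C} \subset \PP(L)$, which is a smooth genus one curve for generic $L$, and show both inclusions of the isomorphism at once.

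First, I would identify $S_L^0$ precisely. The preceding type analysis shows that the only continuous family of stabilizers consists of type (1) elements $g_s = s\Id_A + s^{-1}\Id_B$ indexed by the unique pair $(A,B)$ of transverse 3-planes with $L \subset A \otimes B$; since $S_L^0$ is at most one-dimensional, it must coincide with this family. The crucial observation is that $g_s$ acts on the subspace $A \otimes B$ as a scalar, hence trivially on $\PP(L)$, so the natural representation $S_L \to \PGL(L)$ factors through an injection $S_L/S_L^0 \hookrightarrow \PGL(L)$, and its image preserves $\mathcal{C}$.

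Next, I would locate the image inside $\Aut(\mathcal{C})$. Writing $T_L \subset S_L$ for the subgroup preserving the pair $(A,B)$, the types (3) and (4) elements comprising $T_L/S_L^0$ have the property that their cubes lie in $S_L^0$ (for type (3) this is clear from the eigenvalue list; for type (4) it is the computation already done). Hence they act on the general genus one curve $\mathcal{C}$ as order-three automorphisms, and the only such automorphisms are translations by nonzero $3$-torsion points. Type (2) elements exchange $A$ and $B$; using the decomposition $L = L_1 \oplus L_2$ and the vanishing $L_1 \wedge L_2 \wedge L_2 = 0$, an expansion of the Pfaffian $\Pf(x\lambda + y\mu + z\nu)$ isolates $[L_1]$ as an inflection point of $\mathcal{C}$ and identifies the involution as the point reflection across it. Together these show the image sits inside the subgroup $(\ZZ/3\ZZ)^2 \rtimes \ZZ/2\ZZ \subset \Aut(\mathcal{C})$.

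The substantive step — and the part I expect to be the main obstacle — is surjectivity onto the full group $(\ZZ/3\ZZ)^2 \rtimes \ZZ/2\ZZ$: a priori, the geometric constraints could force the image into a proper subgroup. To overcome this I would put $L$ in the normal form of Lemma~\ref{normal}, where the Pfaffian becomes the Hesse cubic $xyz(u^3+v^3+w^3) - (x^3+y^3+z^3)uvw$, whose 3-torsion translations are generated by the two explicit automorphisms $s$ and $t$ recorded in the text. I would then exhibit $s$ as coming from the diagonal type (3) transformation with eigenvalues $\alpha, j\alpha, j^2\alpha, \alpha^{-1}, j\alpha^{-1}, j^2\alpha^{-1}$, and $t$ as coming from the type (4) cyclic permutation $a_i \mapsto a_{i+1}$, $b_i \mapsto b_{i-1}$. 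Combined with the type (2) involutions $a_i \mapsto \zeta^i b_{i+k}$ exhibited earlier, this produces a full set of generators, so the injection must be onto and we obtain $\Aut_G(L) \simeq S_L/S_L^0 \simeq (\ZZ/3\ZZ)^2 \rtimes \ZZ/2\ZZ$.
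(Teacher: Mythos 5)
Your proposal is correct and follows essentially the same route as the paper: deduce that $S_L^0$ acts trivially on $\PP(L)$ so that $S_L/S_L^0$ embeds in $\PGL(L)$ preserving the Pfaffian cubic $\mathcal{C}$; use the type analysis to show that $T_L/S_L^0$ lands among translations by $3$-torsion and that type~(2) elements give point reflections across the inflection point $[L_1]$ via the expansion of the Pfaffian; then pass to the normal form of Lemma~\ref{normal} to identify $\mathcal{C}$ with the Hesse cubic and exhibit explicit type~(3), type~(4) and type~(2) elements realizing a generating set of $(\ZZ/3\ZZ)^2\rtimes\ZZ/2\ZZ$. This is the paper's own synthesis, reorganized but with the same decompositions and the same key computations.
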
 

\smallskip
Note in particular that $\Aut_G(L)$ contains eight elements of order three and nine involutions.

\medskip On the dual size, observe that the connected component $S_L^0$ does not act trivially on $\PP (L^\perp)$. 
Indeed, $L^\perp$ is the sum of $\wedge^2A^\vee$,  $\wedge^2B^\vee$, and a dimension three subspace of 
$A^\vee\otimes B^\vee$, and a non trivial element $g_s=s\Id_A+s^{-1}\Id_B$ of $S_L^0$ acts on these 
pieces with distinct eigenvalues
(by abuse of notation, we denote by $A^\vee$ the subspace $B^\perp$ of
$(\CC^6)^\vee$, and similarly $A^\perp$ by $B^\vee$). We get
$$\Aut_G(L^\perp)\simeq S_L\simeq  \CC^*\rtimes ((\ZZ/3\ZZ)^2\rtimes
(\ZZ/2\ZZ)).$$
That this is indeed a semi-direct product comes from the fact that we
have from the above analysis an explicit splitting of the exact
sequence
\[
  0 \to
  \CC^* \to
  \Aut_G(L^\perp) \to
  ((\ZZ/3\ZZ)^2\rtimes (\ZZ/2\ZZ))
  \to 0.
\]

The corresponding  codimension three section $X$ of $\G(2,6)$  is a Fano fivefold of index $3$ 
with automorphism group $\Aut(X)=\Aut_G(L^\perp)$. 
The involutions in this automorphism group fix the intersection of 
$X$ with their eigenspaces, that is, the intersection of $\G(2,6)$ with a general hyperplane in $\PP(\wedge^2E\oplus \wedge^2F)^\vee$,
and a general codimension two subspace in $\PP(E\otimes F)^\vee$. The former is the union of two skew lines, and the latter 
is a del Pezzo surface of degree six. 

The order three elements of the automorphism group are of the form $\Id_P+j\Id_Q+j^2\Id_R$, where $P,Q,R$ are 
transverse planes in $\CC^6$. The eigenspaces of the induced action on $\wedge^2\CC^6$ are $\wedge^2P\oplus Q\otimes R$
and its two siblings. Each of them intersect $L^\perp$ along a generic hyperplane. Since 
$$\PP(\wedge^2P\oplus Q\otimes R)\cap G(2,6)=\PP(\wedge^2P)\cup \PP(Q)\times\PP(R),$$
a generic hyperplane section gives a conic. We conclude that the fixed loci of the order three automorphisms of $X$
are unions of three conics.

\medskip\noindent {\it Remark}. As we already mentioned in the Introduction, codimension three sections 
of $\G(2,6)$ were considered before by Piontkowski and Van de Ven \cite{pv}. Through direct computations 
they identified the connected component of the automorphism group, and they proved that the quotient 
embeds in the group of projective transformations of the associated plane cubic curve. Our result, which follows from  a 
completely different proof, is more precise since we completely identify this quotient, as well as the geometric
nature of its eighteen elements.

\section{Genus $10$} 
\setcounter{genus}{10}

We proceed with the case of genus $10$, which is rather straightforward because the relevant Lie group $G_2$ has only rank two. Indeed, 
this implies that the number of unipotent orbits, and the number of cases to be discussed for semisimple elements, are relatively small.

\subsection{Unipotent  elements}
Recall that the root system of $\fg_2$ is as follows, where we denote
by $(\alpha, \beta)$ a pair of simple roots, with $\alpha$ long and
$\beta$ short:
\\

\setlength{\unitlength}{2mm}
\begin{picture}(20,37)(2,2)
\small
\put(30,20){\vector(1,0){10}}
\put(30,20){\vector(1,2){4.5}}
\put(30,20){\vector(1,-2){4.5}}
\put(30,20){\vector(-1,0){10}}
\put(30,20){\vector(-1,2){4.5}}
\put(30,20){\vector(-1,-2){4.5}}

\thicklines
\put(30,20){\vector(0,1){17}}
\put(30,20){\vector(2,1){17}}
\put(30,20){\vector(-2,1){17}}
\put(30,20){\vector(0,-1){17}}
\put(30,20){\vector(2,-1){17}}
\put(30,20){\vector(-2,-1){17}}

\put(31,36.5){$2\alpha+3\beta$}

\put(10.6,28.5){$\alpha$}
\put(20,28.5){$\alpha+\beta$}
\put(35.5,28.5){$\alpha+2\beta$}
\put(47.5,28.5){$\alpha+3\beta$}

\put(16.5,19.7){$-\beta$}
\put(40.9,19.7){$\beta$}

\put(47.5,10){$-\alpha-3\beta$}
\put(35,10){$-\alpha-2\beta$}
\put(19,10){$-\alpha-\beta$}
\put(10,10){$-\alpha$}

\put(31,3){$-2\alpha-3\beta$}
\end{picture}

According to \cite[p.128]{cm}, there are four non trivial nilpotent orbits in $\fg_2$.
They admit  the following representatives, where as usual we have decomposed $\fg_2$ into a Cartan subalgebra 
and root spaces, and $X_\gamma$ denotes a generator of the root space $\fg_\gamma$. 
$$\begin{array}{llll}
\mathrm{Orbit} & \mathrm{Dimension} & \mathrm{Representative} & \mathrm{Jordan\; type} \\   
\mathcal{O}_{reg} & 12 & X_{\alpha} + X_{\beta} & 12,2 \\
\mathcal{O}_{subreg} & 10 & X_{\alpha+2\beta} + X_{\beta}  & 5,3^3\\
\mathcal{O}_{short} & 8 & X_{\beta} & 4^2,3,1^3 \\
\mathcal{O}_{min} & 6 & X_{\alpha} & 3,2^4,1^3
\end{array}$$
The determination of the Jordan types follows from explicit computations done with the help of \cite{wil}. 
(As observed by a referee, one could also include each nilpotent element into an $\fsl_2$-triple and compute
the eigenvalues of the semisimple element in the triple. Indeed these eigenvalues determine the $\fsl_2$-module structure, 
hence the Jordan type of the nilpotent element.) 
Using Proposition \ref{dim-stable} we can then exclude the possibility for a unipotent element of $G_2$ to stabilize a 
generic subspace $L$ of $\fg_2$ of dimension $2$ to $12$.

\subsection{Semisimple elements}

Suppose $g$ is a semisimple element of $G_2$, say an element of our 
fixed maximal torus. Then its eigenvalues in the adjoint
representation on $\mathfrak g_2$ are $1$ with multiplicity $2$, the
rank of $G_2$, and the values taken by the roots, in other words:
\[
  \begin{array}{ll}
  1 \ (2) & \alpha^2\beta^3 \ (1)  \\
         & \alpha, \alpha\beta, \alpha\beta^2, \alpha\beta^3 \ (1) \\
         & \beta^{-1}, \beta \ (1)\\
         &  \alpha^{-1}\beta^{-3}, \alpha^{-1}\beta^{-2},
           \alpha^{-1}\beta^{-1}, \alpha^{-1} \ (1) \\
         & \alpha^{-2}\beta^{-3} \ (1)  \\
  \end{array}
\]

By Remark~\ref{l:dim-conjcl}, the conjugacy class of $g$ has dimension
$\dim(G_2)-\delta$, with $\delta$ the multiplicity of the eigenvalue
$1$ (recall that $\dim (G_2)=14$).

The Weyl group is isomorphic to the dihedral group of order $12$,
generated by the rotation of order $\pi/3$ and the reflection across
the $x$-axis. It acts as such on the roots,  pictured as
above. 

Degenerations occur if one root takes the value $1$ on $g$,
and collapsings occur when two roots take the same value.
In the generic case, $\alpha,\beta \neq 1$ and the roots take pairwise
distinct values, there is only one double
eigenvalue (namely $1$), and the conjugacy class of $g$ has dimension
$12$, so that \eqref{OK} always holds in  dimensions $2$ to $12$:
this is easily seen, but for completeness we also examine the
eigenspace decomposition $[2,1^{12}]$ with our python procedure.

\subsubsection{Degenerate case}

In this case, we assume that some root takes the value $1$.
Up to the action of the Weyl group we may suppose
that this root is either $\alpha$ or $\beta$. We treat the two cases
separately. 

\smallskip
A) $\alpha=1$. Then we find the eigenvalues
\[
  \begin{array}{lllllll}
    \beta^{-3}\,(2)
    & \beta^{-2} \, (1)
    & \beta^{-1} \, (2)
    & 1 \, (4)
    & \beta \, (2)
    & \beta^{2} \, (1)
    & \beta^{3}\,(2)
  \end{array}
\]
In the generic case, the conjugacy class has dimension $10$.
Collapsings occur if
$\beta$ is a primitive root of $1$ of order $o=2,3,4,5,6$.
\begin{mycpctenum}[i)]
\item if $o=2$, the conjugacy class has dimension $8$,
  and the partition is $[8,6]$;
\item if $o=3$, the conjugacy class has dimension $6$,
  and the partition is $[8,3,3]$;
\item if $o=4,5,6$, the conjugacy class has the generic dimension
  $10$, and the partition is
  $[4^3,2]$,
  $[4,3^2,2^2]$,
  and $[4^2, 2^2,1^2]$
  respectively.
\end{mycpctenum}
We examine all these cases one by one with our python procedure:
\eqref{OK} holds for all relevant $m$ for all of them.

\smallskip
B) $\beta=1$. Then we find the eigenvalues
\[
  \begin{array}{lllll}
    \alpha^{-2}\, (1)
    & \alpha^{-1} \, (4)
    & 1\, (4)
    & \alpha \, (4)
    & \alpha^2 \, (1)
  \end{array}.
\]
In the generic case, the conjugacy class has dimension $10$.
Collapsings occur if
$\alpha$ is a primitive root of $1$ of order $o=2,3,4$.
\begin{mycpctenum}[i)]
\item if $o=2$, the conjugacy class has dimension $8$,
  and the partition is $[8,6]$;
\item if $o=3$, the conjugacy class has the generic dimension $10$,
  and the partition is $[5,5,4]$;
\item if $o=4$, the conjugacy class has the generic dimension
  $10$, and the partition is
  $[4^3,2]$.
\end{mycpctenum}
We examine all these cases one by one with our python procedure:
\eqref{OK} holds for all relevant $m$ for all of them.

\subsubsection{Nondegenerate cases}
It remains to consider those cases in which two roots coincide.
Up to the action of the Weyl group, this reduces to a short list of
possibilities.

If two short roots collapse, we may assume that $\beta$ collapses with
$\beta^{-1},\alpha\beta,\alpha\beta^2$.
In the latter two cases we have respectively $\alpha,\alpha\beta$
which take the value $1$, and these possibilities have already been
investigated. 

If a short root collapses with a long root, we may assume that $\beta$
collapses with $\alpha,\alpha^2\beta^3,\alpha\beta^3$.
We discard the latter possibility, which corresponds to
the degeneration $\alpha\beta^2=1$ given by a short root. We also discard the 
second one which is equivalent, up to the Weyl action, to
$\beta=\beta^{-1}$. 

If two long roots collapse, we may assume that $\alpha$ collapses with
$\alpha^{-1},\alpha^2\beta^3,\alpha\beta^3$. We discard the collapsing
with $\alpha^2\beta^3$ which coincides with a degeneration given by a long root. 
We are thus left with the following list:  a) $\beta=\beta^{-1}$; b) 
$\beta=\alpha$; c) $\alpha=\alpha^{-1}$; d) $\alpha=\alpha\beta^3$.

From now on we exclude  $\alpha=1$ or $\beta=1$, which have already been considered.
We record once and for all that for the partitions
$[5,5,4]$ and $[4^3,2]$, \eqref{OK} holds for all $m$ unconditionally
since conjugacy classes have dimension at most $12$:
these two cases are checked with our usual procedure using our python
toolkit. 

\smallskip
A) $\alpha^2=1$.
We thus have $\alpha = -1$, 
which gives the eigenvalues
\[
  \begin{array}{rrrrrrr}
    -\beta^{-3}\, (1)
    & -\beta^{-2} \, (1)
    & -\beta^{-1} \, (1)
    & -1\, (2)
    & -\beta \, (1)
    & -\beta^2 \, (1)
    & - \beta^3 \, (1)
    \\
    \beta^{-3}\, (1)
    && \beta^{-1} \, (1)
    & 1 \, (2)
    & \beta \, (1)
    && \beta^{3} \, (1)
  \end{array}.
\]
In the generic case, the conjugacy class of $g$ has dimension $12$.
Further collapsings occur if $\beta$ is a
$2,3,4,5,6$-th root
of $1$ or $-1$.
\begin{mycpctenum}[i)]
\item\label{g10--1-1}
  if $\beta=-1$, we find the two eigenvalues
  $1 \,(6)$ and $-1 \,(8)$, 
  and the conjugacy class has dimension $8$;
\item if $\beta^2=-1$, we find the eigenvalues
  \[
    \begin{array}{rr}
      -1 \, (2)
      & -\beta \, (4) \\
      1 \, (4)
      & \beta \, (4)
    \end{array}
  \]
  and the conjugacy class has dimension $10$.
\end{mycpctenum}
In the other cases, the collapsings don't go over
the latter partition $[4^3,2]$, hence
\eqref{OK} holds for all $m$ as we have pointed out above.

\smallskip
B) $\beta^2=1$, that is $\beta = -1$, which gives the eigenvalues
\[
  \begin{array}{rrrrr}
    & \alpha^{-1} \, (2)
    & 1 \, (2)
    & \alpha \, (2)
    & 
    \\
    -\alpha^{-2} \, (1)
    & -\alpha^{-1} \, (2)
    & -1\, (2)
    & -\alpha \, (2)
    & -\alpha^2 \, (1)
  \end{array}.
\]
Further collapsings occur if $\alpha$ is a $2,3,4$-th root of $1$
or a $2,3$-rd root of $-1$. The case $\alpha=-1$ has already been
considered. 
\begin{mycpctenum}[i)]
\item if $\alpha^2=-1$, we find the eigenvalues
  \[
  \begin{array}{rrrrr}
    &
    & 1 \, (4)
    & \alpha \, (4)
    & 
    \\
    &
    & -1\, (2)
    & -\alpha \, (4)
    &
  \end{array}.
  \]
  and the conjugacy class has dimension $10$.
\end{mycpctenum}
In the other cases, the collapsings don't go over the above partition
$[4^3,2]$, hence
\eqref{OK} always holds.

\smallskip
C) $\beta^3=1$, that is $\beta=j$, a primitive cubic root of $1$. We
get the eigenvalues
\[
  \begin{array}{rrrrr}
    \alpha^{-2} \, (1)
    & \alpha^{-1} \, (2)
    & 1 \, (2)
    & \alpha \, (2)
    & \alpha^2 \, (1)
    \\
    & j \alpha^{-1} \, (1)
    & j \, (1)
    & j\alpha \, (1)
    \\
    & j^2 \alpha^{-1} \, (1)
    & j^2 \, (1)
    & j^2 \alpha \, (1)
  \end{array}
\]
The condition giving the largest number of collapsings is
$\alpha^3=1$, in which case we obtain ($\alpha \neq 1$)
\[
  \begin{array}{lll}
    1 \, (4)
    & j \, (5)
    & j^2 \, (5)
  \end{array}.
\]
So we have at most $[5^2,4]$ and \eqref{OK} always holds,
as we have pointed out above.

\smallskip
D) $\alpha = \beta$.
We find the eigenvalues
\[
  \begin{array}{lllllllllll}
    \alpha^{-5}
    & \alpha^{-4}
    & \alpha^{-3}
    & \alpha^{-2}
    & \alpha^{-1}
    & 1
    & \alpha
    & \alpha^{2}
    & \alpha^{3}
    & \alpha^{4}
    & \alpha^{5}
    \\
    (1) & (1) & (1) & (1) 
    & (2) & (2) & (2) & 
    (1) & (1) & (1) & (1) 
  \end{array}      
\]
Collapsings occur for $\alpha$ a root of $1$ of order $o\leq 10$. The
cases $o=2,3$ have been considered already, and for $o \geq 4$  we get
at most the partition $[4^3,2]$ (for $o=4$), hence \eqref{OK}
holds in all cases.

\subsubsection{Conclusion} We conclude from this analysis that 
no non trivial semisimple element of $G_2$ can stabilize a generic
subspace of $\fg_2$ of any dimension from $2$ to $12$. Since this was also the case for unipotent elements, the 
stabilizer of a generic subspace must be trivial in this range of dimensions.

\section{Genus $9$}
\setcounter{genus}{9}


\subsection{Unipotent elements}
\label{g9:unip}
Nilpotent orbits in $\fsp_{2n}$ are parametrized by partitions $\pi=(\pi_1,\ldots , \pi_k)$ of $2n$ whose odd parts have even multiplicities. 
As is usual we denote by $\pi^*$ the dual partition, and by 
$n_-(\pi)$ the number of odd parts. The 
codimension of the corresponding orbit $\mathcal{O}_\pi$ is given by 
$$\mathrm{codim} (\mathcal{O}_\pi) = \frac{1}{2}\Big( \sum_i(\pi_i^*)^2 +n_-(\pi)\Big).$$

According to \cite[Recipe 5.2.2]{cm}, one obtains a representative of the corresponding orbit as 
follows. Observe that $\fsl_r$ embeds in $\fsp_{2r}$, so counting odd parts with half their multiplicities, we get embeddings 
$$(\prod_{\pi_i\,even}\fsp_{\pi_i})\times (\prod_{\pi_j\,odd}\fsl_{\pi_j}) \subset \fsp_{2n}.$$
Adding regular nilpotent elements of each factor, we get a representative $X_\pi$ of the
nilpotent orbit $\mathcal{O}_\pi$. 

In order to make concrete computations in $\fsp_6$, we first choose a 
basis $e_1,e_2,e_3,e_{-3},e_{-2},e_{-1}$  of $\CC^6$ in which the 
invariant skew-symmetric form writes $\omega = e_1^*\wedge e_{-1}^*+ e_2^*\wedge e_{-2}^*+ e_3^*\wedge e_{-3}^*$. 
The $14$-dimensional module 
$$\wedge^{\langle 3 \rangle}\CC^6 =
\ker(\wedge^3\CC^6\stackrel{\omega}{\rightarrow} \CC^6)$$
admits the basis consisting of the eight vectors 
$f_{\pm\pm\pm}=e_{\pm 1}\wedge e_{\pm 2}\wedge e_{\pm 3}$, 
and the six vectors $g_{\pm k}$, where $k=1,2,3$, given by 
$g_1 = e_1\wedge (e_2\wedge e_{-2}-e_3\wedge e_{-3})$, and so on. 
Our goal is to determine the Jordan type of the action on 
$\wedge^{\langle 3 \rangle}\CC^6$ of a member $X_\pi$ of
each of the eight nilpotent orbits $\mathcal{O}_\pi$ of $\fsp_6$.
Following the above mentioned rule, we provide below an explicit representative  $X_\pi$, in matrix form in our fixed basis, of 
$\mathcal{O}_\pi$. The induced action in our preferred basis of 
$\wedge^{\langle 3 \rangle}\CC^6$ is then easily computed, and 
in particular its Jordan type is readily obtained. 
\begin{alignat*}{2}
X_{[6]}&=\begin{pmatrix}
0 & 1& 0& 0& 0& 0\\  0& 0& 1&  0& 0& 0\\
 0& 0& 0& 1& 0& 0 \\ 0& 0& 0& 0& -1&  0\\  0& 0& 0& 0& 0& -1\\  0& 0& 0&  0& 0& 0
  \end{pmatrix},
& \qquad 
 X_{[42]}&=\begin{pmatrix}
0 & 1& 0& 0& 0& 0\\  0& 0& 0&  0& 1& 0\\
 0& 0& 0& 1& 0& 0 \\ 0& 0& 0& 0& 0&  0\\  0& 0& 0& 0& 0& -1 \\  0& 0& 0&  0& 0& 0
\end{pmatrix},
\\[2mm]
X_{[41^2]} &=\begin{pmatrix}
0 & 1& 0& 0& 0& 0\\  0& 0& 0&  0& 0& 0\\
 0& 0& 0& 0& 0& 0 \\ 0& 0& 0& 0& 0&  0\\  0& 0& 0& 0& 0& -1\\  0& 0& 0&  0& 0& 0
\end{pmatrix},
&
 X_{[3^2]}&=\begin{pmatrix}
0 & 1& 0& 0& 0& 0\\  0& 0& 1&  0& 0& 0\\
 0& 0& 0& 0& 0& 0 \\ 0& 0& 0& 0& -1&  0\\  0& 0& 0& 0& 0& -1 \\  0& 0& 0&  0& 0& 0
\end{pmatrix},
\\[2mm]
X_{[2^3]}&=\begin{pmatrix}
0 & 0& 0& 0& 0& 1\\  0& 0& 0&  0& 1& 0\\
 0& 0& 0& 1& 0& 0 \\ 0& 0& 0& 0& 0&  0\\  0& 0& 0& 0& 0& 0\\  0& 0& 0&  0& 0& 0
  \end{pmatrix},
&
X_{[2^21^2]}&=\begin{pmatrix}
0 & 0& 0& 0& 0& 0\\  0& 0& 0&  0& 1& 0\\
 0& 0& 0& 1& 0& 0 \\ 0& 0& 0& 0& 0&  0\\  0& 0& 0& 0& 0& 0\\  0& 0& 0&  0& 0& 0 
\end{pmatrix},
\\[2mm]
X_{[21^4]}&=\begin{pmatrix}
0 & 0& 0& 0& 0& 0\\  0& 0& 0&  0& 0& 0\\
 0& 0& 0& 1& 0& 0 \\ 0& 0& 0& 0& 0&  0\\  0& 0& 0& 0& 0& 0\\  0& 0& 0&  0& 0& 0
  \end{pmatrix},
&
X_{[1^6]}&=\begin{pmatrix}
0 & 0& 0& 0& 0& 0\\  0& 0& 0&  0& 0& 0\\
 0& 0& 0& 0& 0& 0 \\ 0& 0& 0& 0& 0&  0\\  0& 0& 0& 0& 0& 0\\  0& 0& 0&  0& 0& 0 
 \end{pmatrix}.
\end{alignat*}
 
We finally obtain the following Jordan types:
 $$\begin{array}{llll}
\mathrm{Orbit} & \mathrm{Dimension}  & \mathrm{Jordan\; type} \\   
\mathcal{O}_{[6]} & 18 & 10,4 \\
\mathcal{O}_{[42]} & 16   & 6,4^2 \\
\mathcal{O}_{[41^2]} & 14& 4,3^2,2^2 \\
\mathcal{O}_{[3^2]} &  14 & 5,3^2,1^3 \\
\mathcal{O}_{[2^3]} &  12 & 4,2^5 \\
\mathcal{O}_{[2^21^2]} & 10 & 3^2,2^2,1^4 \\
\mathcal{O}_{[21^4]} & 6 & 2^5,1^4 \\
\mathcal{O}_{[1^6]} & 0 & 1^{14} 
\end{array}$$

\subsection{Semisimple  elements}
\label{g9:smspl}
If $g$ is a semisimple element of $\Sp_6$, with eigenvalues
$t_1,t_2,t_3$ and their inverses,
then the eigenvalues of the induced action on
$\wedge^{\langle 3  \rangle}\CC^6$
are the same six eigenvalues (type I), 
plus the eight products $t_1^{\pm 1}t_2^{\pm 1}t_3^{\pm 1}$
(type II).
The action of the Weyl group is generated by the permutations of
$\{1,2,3\}$ and all possible exchanges between $t_i$ and its inverse. 

The roots are $t_i^{\pm 1}t_j ^{\pm 1}$ for all $1\leq i < j \leq
3$ and $t_i ^{\pm 2}$ for $1 \leq i \leq 3$, in the multiplicative
notation. 
This gives the following degeneration stratification:
\begin{mycpctenum}[A)]
\item $t_2 = t_3$;
\item $t_3^2=1$;
\item $t_1=t_2=t_3$;
\item $t_2 = t_3$ and $t_1^2=1$;
\item $t_2 = t_3$ and $t_3^2=1$;
\item $t_2^2=t_3^2=1;$
\item $t_1=t_2$ and $t_1^2=t_3^2=1$;
  
\end{mycpctenum}
(The case $t_1=t_2=t_3$ and $t_1^2=1$ is trivial, because then $g =
\pm 1$ belongs to the centre of $\Sp_6$).

\subsubsection{Regular case}
In this case we assume that no root takes the value $1$ on $g$, so
that
$t_1,t_2,t_3,t_1^{-1},t_2^{-1},t_3^{-1}$ are pairwise distinct,
and not equal to $\pm1$.
Then the conjugacy class of $g$ has dimension $18$.

Collapsings among type II are of the following
kinds:
\begin{mycpctenum}[a)]
\item $t_1t_2t_3 = t_1^{-1}t_2^{-1}t_3$, if
  $t_1^2t_2^2=1$;
\item $t_1t_2t_3 = t_1^{-1}t_2^{-1}t_3^{-1}$, provided
  $t_1^2t_2^2t_3^2=1$. 
\end{mycpctenum}
It follows that type II eigenvalues can at most collapse in pairs
in this regularity stratum.
Since on the other hand
type I eigenvalues are pairwise distinct, in the present case all
eigenspaces in the representation have dimension at most $3$.
Then \eqref{OK} holds in all cases:
as we have already done before, we use our python toolkit to write
down all partitions of $14$ as sums of integers not larger than $3$,
then select among them those maximal for the partial order of
Remark~\ref{rk:monotone}, and compute for each of those the maximum of
Proposition~\ref{dim-stable}~(1) for all $m=2,\ldots,12$.

\subsubsection{$t_2=t_3$}
In this case we have the eigenvalues
\[
  t_1^{\pm1}\, (3)
  \quad t_2^{\pm 1} \, (2)
  \quad t_1^{\pm1}t_2^{\pm2} \, (1).
\]
We assume that $t_1^{\pm1},t_2^{\pm 1}$ are $4$ pairwise distinct
values not equal to $\pm1$
(the case when they coincide is a further degeneration
class), hence the conjugacy class of $g$ has dimension $16$.

By our assumption, collapsings must involve the eigenvalues
$t_1^{\pm1}t_2^{\pm2}$, and can be of the following kinds
(as always, up to the Weyl action): a) $t_1^2=t_2^2$; b) $t_2^3=t_1$;
c) $t_2^4=1$; d) $t_1^2=t_2^4$. \smallskip


Assume $t_1^2=t_2^2$, hence $t_2=-t_1$. Then the eigenvalues are
\[
  t_1^{\pm1}\, (4)
  \quad -t_1^{\pm 1} \, (2)
  \quad t_1^{\pm3}\, (1).
\]
The only further collapsings in this degeneration class are if
$t_1^4=-1$ or $t_1^6=1$ (note that if $t_1=i$, then
$t_2=t_1^{-1}$),
and they give the partitions
$[4^2,3^2]$ and
$[4^2,2^3]$ respectively. \eqref{OK} holds in all cases: as usual by
now, it suffices by monotonicity to check the two latter cases, which
we do with our python toolkit.

Assume $t_1=t_2^3$. Then the eigenvalues are
\[
  t_2^{\pm3}\, (3)
  \quad t_2^{\pm 1} \, (3)
  \quad t_2^{\pm5} \, (1).
\]
It is excluded in this case that $t_2$ be a root of $1$ of order
$2,3,4,6$ so the possible collapsings happen when it is a root of
order $8$ or $10$. They give the partitions
$[4^2,3^2]$ and $[3^4,2]$, and \eqref{OK} holds in all cases
as follows from the checkings already carried out.

Assume $t_2=i$. Then we have the eigenvalues
\[
  t_1^{\pm1}\, (3)
  \quad \pm i \, (2)
  \quad -t_1^{\pm1} \, (2),
\]
and no further collapsing is possible. We thus have the partition
$[3^2,2^4]$, and \eqref{OK} holds in all cases.

Eventually, if  $t_1^2=t_2^4$
we get the eigenvalues
\[
  t_1^{\pm1}\, (3)
  \quad t_2^{\pm 1} \, (2)
  \quad t_1t_2^{-2} \, (2)
  \quad (t_1t_2^{2})^{\pm1} \, (1)
\]
and no new further collapsing is possible,
so that again \eqref{OK} holds in all cases.

\subsubsection{$t_3^2=1$}
In this case we have the eigenvalues
\[
  t_1^{\pm1},t_2^{\pm1}\, (1)
  \quad t_3 \, (2)
  \quad t_3t_1^{\pm1}t_2^{\pm1} \, (2)
\]
and the conjugacy class of $g$ has dimension $16$.

Type I eigenvalues are pairwise distinct.
Up to the Weyl action,
the only possible collapsing between types I and II is
$t_1^2 = t_3t_2$, 
and the only possible one in type  II is
$t_1^2 = t_2^2$.

Assume $t_1^2 = t_3t_2$. Then also
$t_1^{-2} = t_3t_2^{-1}$, and the eigenvalues are
\[
  t_1^{\pm1}\, (3), \quad
  t_2^{\pm1}\, (1)
  \quad t_3 \, (2)
  \quad t_3t_1t_2,\,
  t_3t_1^{-1}t_2^{-1}\, (2).
\]
A further collapsing happens only if
either $t_2=t_3t_1^{-1}t_2^{-1}$ ($\iff t_1^5=t_3$),
or $(t_1t_2)^2=1$ ($\iff t_1^6=1$).
They give the partitions
$[3^4,2]$ and $[4,3^2,2,1^2]$ respectively,
and \eqref{OK} holds in all cases.

Assume $t_1^2 = t_2^2$. Possible combinations with collapsings between
types I and II have been considered above, so the only new possibility
of a further collapsing is that simultaneously $t_1^2=t_2^{-2}$, which
is possible only if $t_2=\pm i$ and $t_1=-t_2$, which takes us to
another stratum.

\subsubsection{$t_1=t_2=t_3$}
We call $t$ the common value, which in this stratum is assumed to verify
$t^2 \neq 1$. In this case the conjugacy class of $g$ has dimension
$12$ because of the $6$ new relations $t_i/t_j=1$, $i\neq j$,
and the eigenvalues are
\[
  \begin{array}{lll}
    t^{\pm1}\, (6)
    & t^{\pm3}\, (1).
  \end{array}
\]
Collapsings occur if either $t^4=1$ or $t^6=1$, in which cases we get
the partitions $[7^2]$ and $[6^2,2]$ respectively.
In the latter case \eqref{OK} holds for all $m$, and for all $m>2$ in
the former.

With $t=\pm i$,
we obtain for $m=2$ (resp.\ $m=12$)
a $12$-dimensional family of pairs $(L,g)$ with $g.L=L$ by
considerings sums of two lines in the two eigenspaces
(resp.\ the dual configuration).
\gag
\label{gag9.1}

\subsubsection{$t_2 = t_3$ and $t_1^2=1$}
In this case we have the eigenvalues
\[
  \begin{array}{lll}
    t_1\, (6)
    & t_2^{\pm1} \, (2)
    & t_1 t_2^{\pm2} \, (2)
  \end{array}
\]
with $t_2 \neq \pm1$, and
the conjugacy class of $g$ has dimension $14$.

The possible collapsings are $t_2=t_1t_2^{-2}$
($\iff t_2^3=t_1$), and $t_2^4=1$, which cannot happen simultaneously.
They give the partitions
$[6,4^2]$ and $[6,4,2^2]$
respectively, and \eqref{OK} holds in all cases.

\subsubsection{$t_2 = t_3$ and $t_3^2=1$}
In this case we have the eigenvalues
\[
  \begin{array}{lll}
    t_1^{\pm1}\, (5)
    & t_2 \, (4)
  \end{array}
\]
with $t_1 \neq \pm1$, and
the conjugacy class of $g$ has dimension $12$
($6$ relations $t_2/t_3, t_2^2, t_3^2$ and their inverses).
No collapsing is possible, and we have the only partition
$[5^2,4]$, for which \eqref{OK} holds for all $m$.

\subsubsection{$t_2^2=t_3^2=1$}
We may assume that $t_2=1$, $t_3=-1$, and $t_1 \neq \pm1$.
We have the eigenvalues
\[
  \begin{array}{lll}
    t_1^{\pm1}\, (1)
    & \pm1 \, (2)
    & - t_1^{\pm1}\, (4)
  \end{array}
\]
and
the conjugacy class of $g$ has dimension $14$.

The only possible collapsing happens when $t_1^2=-1$, in which case we
have the partition $[5^2,2^2]$, and \eqref{OK} holds in all cases.

\subsubsection{$t_1=t_2$ and $t_1^2=t_3^2=1$}
In this case we may assume that $t_1=t_2=-1$ and $t_3=1$.
We have the four relations $t_1/t_2,t_1t_2,t_1^2,t_2^2,t_3^2$ and their
inverses, so the conjugacy class of $g$ has dimension $8$.
(For verification: all roots take the value $1$ except
$t_1/t_3,t_2/t_3,t_1t_3, t_2t_3$ and their 
inverses, so the conjugacy class of $g$ has dimension $8$ indeed).

The eigenvalues in the representation are
\[
  \begin{array}{lll}
    -1\, (4)
    & 1 \, (10).
  \end{array}
\]
\eqref{OK} holds for $m>2$, but we find a $16$-dimensional family of
$2$-planes fixed by $g$ by considering those $2$-planes inside the
$10$-dimensional eigenspace.
\\
\gag
\label{gag9.2}

\subsubsection{Conclusion}
We have found that if $P$ is a generic $k$-plane with
$2<k<12$, then its stabilizer is trivial, whereas if $P$ is generic of
dimension $2$ or $12$, then its stabilizer may contain only elements as
described in the two cases \ref{gag9.1} and \ref{gag9.2}, and only finitely many of them.

\subsection{Codimension two}
\label{s:g9.codim2}

Let $P\subset \wedge^{\langle 3\rangle}\CC^6\subset \wedge^3\CC^6$ be
a generic plane.
It follows from the analysis above
that the stabilizer of $P$ is finite,
and its non-trivial elements may only be of the two following kinds
(up to sign):
\begin{mycpctenum}[I)]
\item  involutions $\Id_A-\Id_{A^\perp}$ with $A$ a
  non-degenerate plane,
  provided $P \subset A \otimes \wedge ^{\langle 2 \rangle} A^\perp$
  (case \ref{gag9.2});
  following our usual notation,
  $\wedge ^{\langle 2 \rangle} A^\perp$ is the kernel of the
  contraction by $\omega$ on $\wedge^2 A^\perp$;
\item anti-involutions $i(\Id_E-\Id_F)$ with $E,F$ transverse
  Lagrangian subspaces, provided
  $P$ is the sum of two lines in $\wedge^3E\oplus (E\otimes\wedge^2F)$
  and $(\wedge^2E\otimes F)\oplus \wedge^3F$ respectively (case
  \ref{gag9.1}); 
  beware that the two latter spaces are not entirely contained in
  $\wedge^{\langle 3\rangle}\CC^6$.
\end{mycpctenum}
Proposition~\ref{reduction} below tells us that there are indeed
elements of type I, and that it is always possible to decompose
$A^\perp$ as the sum of two orthogonal non-degenerate planes $A_2,A_3$
in such a way that
$P \subset A \otimes A_2 \otimes A_3 \subset 
A \otimes \wedge ^{\langle 2 \rangle} A^\perp$.
Elements of type II are taken care of in
Proposition~\ref{reduction2}, and the stabilizer of $P$ is completely
described in Corollary~\ref{coro:stab9}.

\begin{prop} \label{reduction}
Let $P\subset \wedge^{\langle 3\rangle}\CC^6\subset \wedge^3\CC^6$ be a generic plane. 
\begin{enumerate}
\item There exists a unique triple $(A_1,A_2,A_3)$ of non-degenerate, pairwise orthogonal planes 
in $\CC^6$, such that $P\subset A_1\otimes A_2\otimes A_3\subset  \wedge^{\langle 3\rangle}\CC^6$.
\item
  The three planes $A_1,A_2,A_3$ are the only
  non-degenerate $2$-planes $A \subset \CC^6$ such that
  $P \subset A \otimes \wedge ^{\langle 2 \rangle} A^\perp$.
\end{enumerate}
\end{prop}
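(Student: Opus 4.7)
The plan is to mimic the argument of Lemma~\ref{normal} (the genus~$8$ codimension three case), adapting it to the symplectic setting. Both assertions will be proved together by comparing incidence varieties of dimension $\dim\Gr(2,\wedge^{\langle 3\rangle}\CC^6)=24$.

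First, $A_1\otimes A_2\otimes A_3\subset\wedge^{\langle 3\rangle}\CC^6$ for any orthogonal symplectic decomposition $\CC^6=A_1\oplus A_2\oplus A_3$: writing $\omega=\omega_1+\omega_2+\omega_3$ with $\omega_i\in\wedge^2 A_i^*$, each $\iota_{\omega_i}$ annihilates $A_1\otimes A_2\otimes A_3$ because any summand contains only one factor from $A_i$. I then introduce two incidence varieties over $\Gr(2,\wedge^{\langle 3\rangle}\CC^6)$: the set $Z$ of quadruples $(A_1,A_2,A_3,P)$ with pairwise orthogonal non-degenerate planes $A_i$ and $P\in\Gr(2,A_1\otimes A_2\otimes A_3)$, and the set $W$ of pairs $(A,P)$ with $A$ a non-degenerate plane and $P\subset A\otimes\wedge^{\langle 2\rangle}A^\perp$. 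Direct parameter counts give $\dim Z=12+12=24$ and $\dim W=8+16=24$.

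The key step is to prove the forgetful map $\pi_Z:Z\to\Gr(2,\wedge^{\langle 3\rangle}\CC^6)$ is dominant, hence generically finite, by exhibiting a point where $d\pi_Z$ is injective. Following the template of Lemma~\ref{normal}, I would choose $P$ in an explicit normal form on the basis $\{e_{\pm 1}\otimes e_{\pm 2}\otimes e_{\pm 3}\}$ of $A_1\otimes A_2\otimes A_3$, decompose $T_zZ$ via the short exact sequence relating the $\Hom(A_i,A_j)$ pieces to the tangent of the relative Grassmannian, and reduce injectivity to the non-vanishing of explicit small determinants in the coefficients of the normal form. This tangent space computation is the main technical obstacle.

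Once $\pi_Z$ is dominant, existence in~(1) is immediate, and existence in~(2) follows from the projection $(A_1,A_2,A_3,P)\mapsto(A_1,P)$ into $W$, valid because $A_2\otimes A_3\subset\wedge^{\langle 2\rangle}(A_2\oplus A_3)$. For the uniqueness statements, I would show that for each fixed non-degenerate plane $A$, the pairs consisting of an orthogonal splitting $A^\perp=A'\oplus A''$ together with $P'\in\Gr(2,A\otimes A'\otimes A'')$ form a $16$-dimensional variety matching $\dim\Gr(2,A\otimes\wedge^{\langle 2\rangle}A^\perp)$. Hence a generic pair $(A,P)\in W$ extends to a unique unordered splitting of $A^\perp$, so every $A$ in the fibre of $\pi_W:W\to\Gr$ above $P$ appears in a triple of $Z$ mapping to $P$. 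Comparing generic degrees via the $\mathfrak{S}_3$-symmetry of $\pi_Z$ then pins down this fibre as $\{A_1,A_2,A_3\}$ for a uniquely determined unordered triple, yielding simultaneously uniqueness in~(1) and the exhaustive description in~(2).
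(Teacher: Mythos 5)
Your proposal follows the template of Lemma~\ref{normal} (genus $8$) — set up incidence varieties of the right dimension, prove dominance by a tangent-space/\'etale computation, and conclude generic finiteness — but this is not the paper's route, and two genuine gaps remain.

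First, the dominance of $\pi_Z$ is not actually established: you flag the tangent-space calculation as ``the main technical obstacle'' without carrying it out, and it is genuinely harder than in Lemma~\ref{normal} because the ambient space is the kernel of $\iota_\omega$ rather than a full exterior power, so the complementary pieces do not split as cleanly. The paper sidesteps this entirely by invoking the \emph{one apparent double point} property of $\LG(3,6)\subset\PP(\wedge^{\langle 3\rangle}\CC^6)$: a generic $p\in P$ has a unique bisecant through $\LG(3,6)$, writing $p=u_1\wedge u_2\wedge u_3+v_1\wedge v_2\wedge v_3$, and a second generic $p'\in P$ together with a simultaneous diagonalization then \emph{constructs} the triple $(A_1,A_2,A_3)$ explicitly, giving dominance with no infinitesimal computation.

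Second, and more fundamentally, your uniqueness arguments do not close. Equality of dimensions for $Z$ (resp.\ for $W$, resp.\ for the fibrewise incidence over a fixed $A$) gives only generic finiteness, never degree one. The $\mathfrak{S}_3$-symmetry shows $6\mid\deg\pi_Z$, not $\deg\pi_Z=6$; and ``extends to a unique unordered splitting of $A^\perp$'' again needs degree one, which is unproven. The paper obtains uniqueness in~(1) precisely from the explicit construction: the triple built from generic $(p,p')$ is locally constant (the fibre of $\pi$ is finite), hence independent of $(p,p')$, and the apparent double point property forces any triple in the fibre over $P$ to coincide with it. For~(2) the paper again uses an apparent double point argument (now for $\PP^1\times\QQ^3\subset\PP^9$), combined with the one-parameter family $\sigma=s_1\Id_{A_1}+s_2\Id_{A_2}+s_3\Id_{A_3}$ and the finiteness of planes $A$ with $P\subset A\otimes\wedge^{\langle 2\rangle}A^\perp$ established in \S\ref{g9:smspl}. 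Without the apparent double point input, or some replacement for it, your degree comparisons cannot pin down $\deg\pi_Z$ and the argument does not yield uniqueness.
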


\begin{proof} The proof partly relies on the fact that the Lagrangian Grassmannian $\LG(3,6)\subset \PP( \wedge^{\langle 3\rangle}\CC^6)$ 
is a variety with {\it one apparent double point}. In other words, given a general point $x$ in $\PP( \wedge^{\langle 3\rangle}\CC^6)$,
there exists a unique bisecant to $\LG(3,6)$ passing through $x$ \cite[Example 2.9]{cr}. 

Let us apply this observation to $x=[p]$, for $p$ a general point of $P$. This means that we can write $p$ in the form
$p=u_1\wedge u_2\wedge u_3+ v_1\wedge v_2\wedge v_3$, where $U=\langle u_1, u_2, u_3\rangle $ and $V=\langle v_1, v_2, v_3\rangle $
are Lagrangian subspaces of $\CC^6$, in general position, and uniquely defined by $p$. Now consider another general point $p'$ of $P$,
again with its two associated  Lagrangian subspaces $U'$, $V'$ of $\CC^6$. Under the generality assumption, we can describe $U'$ and 
$V'$ as the graphs of two isomorphisms $\alpha$ and $\beta$ from $U$ to $V$. Moreover $\alpha\circ\beta^{-1}$ is in general semisimple; 
let $f_1, f_2, f_3$ be a basis of eigenvectors in $V$, with distinct eigenvalues $\lambda_1, \lambda_2, \lambda_3$. Let $e_i= \beta^{-1}(f_i)$.
Since $u_1\wedge u_2\wedge u_3$ is a multiple of $e_1\wedge e_2\wedge e_3$ and $v_1\wedge v_2\wedge v_3$ is a multiple of $f_1\wedge f_2\wedge f_3$,
we conclude that there exist scalars $a, b, a', b'$ such that 
$$\begin{array}{lcl}
p &=&ae_1\wedge e_2\wedge e_3+bf_1\wedge f_2\wedge f_3, \\
p'&=& a'(e_1+f_1)\wedge (e_2+f_2)\wedge (e_3+f_3)+ \\
 & & \hspace*{3cm} +b'(e_1+\lambda_1f_1)\wedge (e_2+\lambda_2f_2)\wedge (e_3+\lambda_3f_3).
\end{array}$$
Letting $A_i=\langle e_i,f_i\rangle$, we deduce that $P\subset A_1\otimes A_2\otimes A_3$. Note that the isotropy of $U, V, U', V'$ implies 
that $A_1, A_2, A_3$ are pairwise orthogonal. 

Now observe that the pairwise orthogonal triples $(A_1,A_2,A_3)$ are parametrized by a variety $X$ of dimension $12$, so that the 
relative Grassmannian $Y$ of planes in $A_1\otimes A_2\otimes A_3$ has dimension $12+12=24$. This is also the dimension of 
$\G(2, \wedge^{\langle 3\rangle}\CC^6)$, and since the previous observations imply that the natural map $\pi: Y\rightarrow \G(2, \wedge^{\langle 3\rangle}\CC^6)$ is dominant, it must be generically finite. As a consequence, the triple $(A_1, A_2, A_3)$ that we have constructed from two general points 
$p, p'$ of $P$ cannot change when we vary $p$ and $p'$, so it must be canonically defined by $P$. We conclude that $\pi$ is in fact birational, 
which implies assertion (1).

\medskip
Let us turn to assertion (2). We first make some points for
future use in the proof. Let
$A \subset \CC^6$ a non-degenerate $2$-plane.
Observe that
$\PP(A\otimes \wedge^{\langle 2\rangle}A^\perp)$ contains the 
Segre product $\PP(A)\times \QQ^3(A^\perp)$, where $\QQ^3(A^\perp)$ is
the hyperplane section of $\G(2,A^\perp)$ defined by $\omega$,
that is the intersection $\G(2,A^\perp) \cap \PP(\wedge^{\langle
  2\rangle}A^\perp)$ inside $\wedge^{2}A^\perp$.
Moreover, this Segre product is contained in $\LG(3,6)$. 
We shall use the fact that the Segre product
$\PP^1\times\QQ^3 \subset \PP^9$ is a variety with one apparent double
point as well (see e.g. \cite[Example 2.4]{cr}).

Let $C_P \subset \LG(3,6)$ be the curve described by the points $x,x'
\in \LG(3,6)$ such that $[p] \in \langle x,x' \rangle$ when $[p]$
varies in $\PP(P)$.
Note that $P \subset A\otimes \wedge^{\langle 2\rangle}A^\perp$ if and
only if $C_P \subset \PP(A\otimes \wedge^{\langle 2\rangle}A^\perp)$:
the if part follows from the fact that $\PP(P)$ is contained in the
span of $C_P$, and the only if part from the fact that
$\PP(A)\times \QQ^3(A^\perp)$ is a variety with one apparent double
point inside its span, so that for general $[p] \in \PP(P)$ the two
points $x,x' \in \LG(3,6)$ necessarily lie on $\PP(A)\times
\QQ^3(A^\perp)$.

Note that a point $x \in \LG(3,6)$ lies in
$\PP(A\otimes \wedge^{\langle 2\rangle}A^\perp)$
if and only if the corresponding Lagrangian
$3$-plane $\Pi_x \subset \CC^6$ intersects $A$ non-trivially:
the only if part is tautological, and conversely
if there exists a non-zero $a \in \Pi_x \cap A$, then
$\Pi_x \subset a^\perp$, hence
the intersection of $\Pi_x \cap A^\perp$ is a plane,
and therefore $x$ belongs to $\PP(A)\times \QQ^3(A^\perp)$.
Moreover, when this holds the intersection $\Pi_x \cap A$ is
necessarily a line, since $A$ is non-degenerate and $\Pi_x$ is
isotropic.



\medskip
Now let us eventually prove assertion (2).
Let $A \subset \CC^6$ a non-degenerate $2$-plane
such that
$P \subset A\otimes \wedge^{\langle 2\rangle}A^\perp$.
Let $x$ be a general point on $C_P$. We have seen in the proof of
assertion (1) that $C_P\subset \PP(A_1)\times\PP(A_2)\times \PP(A_3)$,
so the Lagrangian $3$-plane $\Pi_x$ is the direct sum of three lines
$L_1(x)\subset A_1$,  $L_2(x)\subset A_2$,  $L_3(x)\subset A_3$.
Our assumption that
$P \subset A\otimes \wedge^{\langle 2\rangle}A^\perp$ implies that
$\Pi_x$ and $A$ intersect along a line $L_A(x)$.

Consider the family of automorphisms
$\sigma = s_1 \Id_{A_1}+ s_2 \Id_{A_2}+ s_3 \Id_{A_3}$,
$s_i \in \CC^*$ for $i=1,2,3$. For all such $\sigma$, the plane
$\sigma(A)$ intersects $\Pi_x$ along the line
$\sigma(L_A(x))$ for all $x\in C_P$
(indeed $\sigma$ leaves the lines $L_i(x)$ fixed, hence also
$\Pi_x=L_1(x)+L_2(x)+L_3(x)$), and
therefore $C_P$ is contained in
$\PP(\sigma(A)\otimes \wedge^{\langle 2\rangle}\sigma(A)^\perp)$.
Considering general such $\sigma$'s, we thus obtain a family of
non-degenerate planes $\sigma(A)$ such that
$P \subset \sigma(A)\otimes \wedge^{\langle
  2\rangle}\sigma(A)^\perp$.
Such planes being only finitely many by our analysis in
\S \ref{g9:smspl}, we must have $\sigma(A)=A$ for all $\sigma$.
This implies that $A$ is the sum of two lines in $A_i$ and $A_j$
respectively. Since $A_i$ and $A_j$ are orthogonal whereas $A$ is
non-degenerate, we must have $i=j$ and assertion (2) is proved.
\end{proof}

\begin{prop} \label{reduction2}
Let $P\subset \wedge^{\langle 3\rangle}\CC^6\subset \wedge^3\CC^6$ be
a generic plane, and let $A_1,A_2,A_3$ be as in
Proposition~\ref{reduction}. 
Then there exist exactly twelve pairs $(E,F)$ of transverse
Lagrangian subspaces of $\CC^6$,
such that $E$ and $F$ both meet all three $A_i$'s non trivially,
and $P$ meets non trivially
$\wedge^3E\oplus (E\otimes\wedge^2F)$ and
$(\wedge^2E\otimes F)\oplus \wedge^3F$.
\end{prop}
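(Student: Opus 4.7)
My plan is to use the explicit decomposition $\CC^6 = A_1 \oplus A_2 \oplus A_3$ from Proposition~\ref{reduction} to reduce the question to a counting problem inside $A_1 \otimes A_2 \otimes A_3$. First, any three-dimensional Lagrangian $E \subset \CC^6$ meeting each non-degenerate plane $A_i$ non-trivially must split as $E = E_1 \oplus E_2 \oplus E_3$ with $E_i := E \cap A_i$ a line: since the $A_i$ are in direct sum the $E \cap A_i$ are independent summands, so their dimensions sum to at most $3$, forcing each to equal $1$. Choosing bases $u_i \in E_i$, $v_i \in F_i$, a direct calculation shows that $\wedge^3 E \oplus (E \otimes \wedge^2 F)$ meets $A_1 \otimes A_2 \otimes A_3$ in the $4$-dimensional subspace $W_1$ spanned by $u_1u_2u_3$, $u_1v_2v_3$, $v_1u_2v_3$, $v_1v_2u_3$ (the tensors with an even number of $v$'s), and $(\wedge^2 E \otimes F) \oplus \wedge^3 F$ meets it in the complementary $4$-dimensional $W_2$ spanned by the tensors with an odd number of $v$'s. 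These are precisely the $\pm 1$-eigenspaces of the product involution $\sigma := \sigma_1 \otimes \sigma_2 \otimes \sigma_3$ acting on $A_1\otimes A_2\otimes A_3$, where $\sigma_i \in \mathrm{End}(A_i)$ is the involution with eigenlines $E_i$ and $F_i$. Since $P \subset A_1 \otimes A_2 \otimes A_3$ by Proposition~\ref{reduction}(1), the condition of the proposition translates to: $\sigma$ stabilises $P$ and acts on it with two distinct eigenvalues, equivalently $P = (P \cap W_1) \oplus (P \cap W_2)$.

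Counting such $\sigma$'s up to sign is equivalent to counting the unordered decompositions $\{W_1, W_2\}$, since from such a decomposition one recovers the unordered pair $\{E_i, F_i\}$ of lines in each $A_i$, and the $2^3 = 8$ labellings of ``which is $E_i$'' versus ``which is $F_i$'' are identified in pairs by the global swap $E \leftrightarrow F$, yielding exactly $4$ unordered pairs $\{E, F\}$ per decomposition. To count the decompositions, I would exploit the geometry of $P$ via the Segre threefold $S := \PP(A_1) \times \PP(A_2) \times \PP(A_3) \subset \PP^7 = \PP(A_1 \otimes A_2 \otimes A_3)$. By the classical fact that $S$ has one apparent double point, a generic $[p] \in \PP(P)$ has a unique unordered decomposition as a sum of two Segre points, defining a $2{:}1$ cover $\tilde L \to \PP(P) \simeq \PP^1$ ramified at the $4$ zeros of the $2 \times 2 \times 2$ hyperdeterminant restricted to $P$; for generic $P$, $\tilde L$ is a smooth elliptic curve whose hyperelliptic involution $\iota$ exchanges the two summands. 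A stabilising product involution $\sigma$ acts on $S$ coordinatewise, hence on $\tilde L$ commuting with $\iota$, and descends to an involution $\bar\sigma$ of $\PP(P)$ which must be non-trivial (otherwise $P \subset W_1$ or $W_2$, violating the distinct-eigenvalue condition). Such a non-trivial $\bar\sigma$ necessarily permutes the $4$ branch points in pairs, and there are exactly $3$ involutions of $\PP^1$ doing so, namely the $3$ partitions of $4$ points into $2$ pairs.

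The main obstacle in the plan is then to show that each of these $3$ involutions $\bar\sigma$ of $\PP(P)$ admits a unique lift to a product involution $\sigma_1 \otimes \sigma_2 \otimes \sigma_3$ of $A_1\otimes A_2 \otimes A_3$ stabilising $P$. For uniqueness, one should use that the image curve $C_P \subset S$ of $\tilde L$ dominates each factor $\PP(A_i) = \PP^1$ under projection, so that $\sigma_i$ is forced on infinitely many points of $\PP(A_i)$ and hence uniquely determined as an involution of the line. For existence, one must show that the lifted involution of $C_P$ is coordinatewise on $S$; this should follow from the uniqueness of the rank-$2$ decomposition together with the $\bar\sigma$-equivariance of the $4$ branch points, which encode exactly the product-decomposition structure. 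Granted these two ingredients, one obtains the $3$ unordered decompositions $\{W_1, W_2\}$, hence the $3 \times 4 = 12$ unordered pairs $\{E, F\}$ claimed in the statement.
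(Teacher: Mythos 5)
Your proposal takes a genuinely different, more geometric route from the paper. The paper works purely algebraically: it first shows the existence of at least one decomposition $A_i = E_i \oplus F_i$ compatible with $P$ by a dimension count and an explicit computation of the differential of the natural map into $\G(2, A_1\otimes A_2 \otimes A_3)$; it then pins down all other compatible pairs $(E,F)$ by observing that the corresponding anti-involution $t = i(\Id_E - \Id_F)$ must commute or anticommute with the first one $s$, computes explicit normal forms in the anticommuting case (yielding exactly the two extra decompositions $(E',F')$ and $(E'',F'')$), and verifies their existence by reducing $P$ to a normal form. You instead use the Segre threefold $\PP(A_1)\times\PP(A_2)\times\PP(A_3)$, its one-apparent-double-point property, the resulting double cover $\tilde L \to \PP(P)$ branched over the four zeros of the hyperdeterminant, and count involutions of $\PP^1$ permuting four generic points in pairs. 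This is a pretty reformulation (and connects to the observation at the very end of the paper's genus $9$ section, where the elliptic curve $C_P$ appears), and the first part of your argument — reduction to product involutions $\sigma = \sigma_1\otimes\sigma_2\otimes\sigma_3$, and the identification of $W_1, W_2$ with the even/odd pieces — is correct and matches the paper in spirit.

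However, there is a genuine gap, which you partially identify yourself: the final step, that each of the three involutions $\bar\sigma$ of $\PP(P)$ lifts to a unique product involution $\sigma$ of $A_1\otimes A_2\otimes A_3$ stabilising $P$. Your argument so far only establishes that the map from (decompositions $\{W_1,W_2\}$) to (involutions of $\PP(P)$ permuting the four branch points in pairs) is well-defined with target of cardinality three; but you have not shown this map is injective (two distinct decompositions could a priori induce the same $\bar\sigma$, since $\bar\sigma$ only sees $W_i \cap P$, not $W_i$), nor surjective (you have not shown that even one such $\sigma$ exists, nor that all three $\bar\sigma$'s are realized). Your sketch of uniqueness — that $\sigma_i$ is forced on infinitely many points of $\PP(A_i)$ — presupposes that a lift exists \emph{as a product involution}, which is exactly what must be proved; and your existence heuristic (``should follow from the uniqueness of the rank-2 decomposition together with the $\bar\sigma$-equivariance of the branch points'') is not an argument but a hope. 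The paper's proof fills exactly this hole with the dominance computation and the explicit normal forms; to complete your route you would need an independent argument (e.g.\ showing the double cover $\tilde L$ carries a natural abelian surface or theta-group structure from which the lifts can be constructed), and this is not a small step.
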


\begin{proof}
Suppose that we have decomposed each
$A_i$ into the direct sum of two lines,
$A_i=E_i\oplus F_i$. There is an
induced decomposition
$A_1\otimes A_2\otimes A_3=A_E \oplus A_F$, with\\
\resizebox{\linewidth}{!}{%
\parbox{\textwidth}{%
\begin{align*}
  A_E &= (E_1\otimes E_2\otimes E_3) \oplus
           (E_1\otimes F_2\otimes F_3)
           \oplus (F_1\otimes E_2\otimes F_3)
           \oplus (F_1\otimes F_2\otimes E_3), \\
    A_F &= (F_1\otimes F_2\otimes F_3)
           \oplus (F_1\otimes E_2\otimes E_3)
           \oplus (E_1\otimes F_2\otimes E_3)
           \oplus (E_1\otimes E_2\otimes F_3).
\end{align*}
}}
There are $6$ parameters for the six lines $E_i,F_i$, and then $3+3$ parameters for choosing a line in $A_E$ and a line in $A_F$; taking their
direct sum, this gives a family of planes in $A_1\otimes A_2\otimes A_3$ with twelve parameters; since $12$ is also the dimension of
$\G(2,A_1\otimes A_2\otimes A_3)$, we can expect that a generic plane $P$ can be obtained in this way. In order to check that this guess
is correct, we compute the generic rank of the differential of the following map $\eta$. Let $Q_i$ denote the complement of the diagonal
in $\PP (A_i)\times \PP(A_i)$. Over $Q= Q_1\times Q_2\times Q_3$, there are two rank four vector bundles $\mathcal{A}_E$ and  $\mathcal{A}_F$
defined by the formulas above; they are both sub-bundles of the trivial vector bundle with fiber $A_1\otimes A_2\otimes A_3$, and 
the direct sum map induces the morphism
$$\eta : Z=\PP(\mathcal{A}_E)\times_Q \PP(\mathcal{A}_F) \longrightarrow \G(2,A_1\otimes A_2\otimes A_3)$$ 
that we claim is dominant.
To check this, we fix a basis $\alpha_i, \alpha_{-i}$ of $A_i$. Local coordinates on an open subset of $Q$ are 
obtained by considering in $A_i$ the lines $E_i$ and $F_i$ generated
by $e_i=\alpha_i+x_i\alpha_{-i}$ and $f_{i}=\alpha_{-i}+y_i\alpha_i$.
We then get local relative coordinates on $Z$ by considering lines
generated by
$d=e_1e_2e_3+p_1e_1f_2f_3+p_2f_1e_2f_3
+p_3f_1f_2e_3$ and
$d'=f_1f_2f_3+q_1f_1e_2e_{3}+q_2e_1f_2e_{3}+q_3e_1e_{2}f_3$
(for brevity we omit the tensor product signs).  
At first order, 
we compute that 
$$\begin{array}{lcl}
d & = & \alpha_1\alpha_2\alpha_3+x_1\alpha_{-1}\alpha_2\alpha_3+x_2\alpha_1\alpha_{-2}\alpha_3+x_3\alpha_1\alpha_2\alpha_{-3}+ \\
 & & \hspace*{3cm}+p_1\alpha_1\alpha_{-2}\alpha_{-3}+p_2\alpha_{-1}\alpha_2\alpha_{-3}+p_3\alpha_{-1}\alpha_{-2}\alpha_3, \\
d' & = & \alpha_{-1}\alpha_{-2}\alpha_{-3}+y_1\alpha_1\alpha_{-2}\alpha_{-3}+y_2\alpha_{-1}\alpha_2\alpha_{-3}+y_3\alpha_{-1}\alpha_{-2}\alpha_3+\\
 & & \hspace*{3cm}+q_1\alpha_{-1}\alpha_2\alpha_{3}+q_2\alpha_1\alpha_{-2}\alpha_{3}+q_3\alpha_1\alpha_{2}\alpha_{-3},
\end{array}$$
which implies our claim. 
Note that $E=E_1\oplus E_2\oplus E_3$ and $F=F_1\oplus F_2\oplus F_3$ are Lagrangian subspaces of $\CC^6$ and that the symplectic 
automorphism $s=i(\Id_E-\Id_F)$ leaves $P$ invariant.

\medskip
Now suppose that there is another decomposition $A_i=E'_i\oplus F'_i$
compatible with $P$, hence two other Lagrangian subspaces $E'$ and
$F'$ of $\CC^6$ such that the symplectic automorphism
$t=i(\Id_{E'}-\Id_{F'})$ also leaves $P$ invariant.
Then also $u=st$ leaves $P$ invariant, hence we must have 
$u^2 = \pm \Id$ by the analysis of \S \ref{g9:unip} and
\ref{g9:smspl}. 

If $u^2=\Id$, then $st=ts$ and therefore, the decompositions
$A_i=E_i\oplus F_i=E'_i\oplus F'_i$, which are given by the
eigenspaces of $s$ and $t$, must be the same. This only leaves the
possibility to exchange $E_i$ with $F_i$.
The eight possible permutations give four pairs
of Lagrangian spaces $(E,F)$.

If $u^2=-\Id$, then $st=-ts$, so that for all $j=1,2,3$ the action of
$t$ on $A_j$ exchanges the eigenspaces of $s$. We can thus find
generators $e_j$ of $E_j$ and $f_j$ of $F_j$ such that $t(e_j)=f_j$
and $t(f_j)=-e_j$. The eigenspaces of $t$ and $u$ acting on $A_j$
are then the lines generated by $e_j \pm im_j$ and $e_j \pm
m_j$, respectively, so that the associated pairs of Lagrangian spaces
are
$$\begin{array}{ll} 
E'=\langle e_1 + if_1, e_2 + if_2, e_3 + if_3\rangle, &  F'=\langle e_1 - if_1, e_2 - if_2, e_3 - if_3\rangle, \\
E''=\langle e_1 + f_1, e_2 + f_2, e_3 + f_3\rangle, &  F''=\langle e_1 - f_1, e_2 - f_2, e_3 - f_3\rangle .
\end{array}$$
Moreover, in this case the plane $P$ is generated by 
two vectors of the form
$$\begin{array}{lcl} 
p & = & xe_1\wedge e_2\wedge e_3+y_1e_1\wedge f_2\wedge f_3+y_2f_1\wedge e_2\wedge f_3+y_3f_1\wedge f_2\wedge e_3, \\
p' & = & xf_1\wedge f_2\wedge f_3+y_1f_1\wedge e_2\wedge e_3+y_2e_1\wedge f_2\wedge e_3+y_3e_1\wedge e_2\wedge f_3.
\end{array}$$
Indeed, $P$ contains a vector as $p$ above, and since it is stable
under $t$ it also contains $t(p)=p'$, which is linearly independent
from $p$. Similarly, $u(p)=ip'$.

Eventually, note that there can be no other decomposition of the  $A_i$'s compatible with $P$, since $t$ and $u$ are the only anti-involutions
up to sign, that exchange the eigenspaces of $s$.

\medskip
Conversely, we claim that the pairs $(E',F')$ and $(E'',F'')$ as
above, and correspondingly $t$ and $u$ indeed exist.  To
see this, recall that we can always generate $P$, up to sign, by two
vectors
$$\begin{array}{lcl} 
p & = & xe_1\wedge e_2\wedge e_3+y_1e_1\wedge f_2\wedge f_3+y_2f_1\wedge e_2\wedge f_3+y_3f_1\wedge f_2\wedge e_3, \\
p' & = & x'f_1\wedge f_2\wedge f_3+y'_1f_1\wedge e_2\wedge e_3+y'_2e_1\wedge f_2\wedge e_3+y'_3e_1\wedge e_2\wedge f_3.
\end{array}$$
Multiplying $f_i$ by some scalar $\tau_i$, we can always reduce to the
case where $x'=x$ and $y'_i=y_i$ for each $i$, if the coefficients
$x,x',y_i,y'_i$ are non zero, a condition which holds by genericity of
$P$. 
Then $P$ is preserved by $t$ and $u$, and the rest follows.
\end{proof}

\begin{coro}\label{coro:stab9}
The stabilizer in $\PSp_6$ of a generic plane $P\subset \wedge^{\langle 3\rangle}\CC^6$ is isomorphic to $(\ZZ/2\ZZ)^4$. 
\end{coro}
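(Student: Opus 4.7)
The plan is to combine the two preceding propositions with the case analysis of \S\ref{g9:unip} and \S\ref{g9:smspl} to produce an explicit list of 16 elements in $S_P = \Stab_{\PSp_6}(P)$, and then to verify that this list is exhaustive and forms a group isomorphic to $(\ZZ/2\ZZ)^4$.

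First, I would compile the candidate list. The analyses of \S\ref{g9:unip} and \S\ref{g9:smspl} allow only the identity and two kinds of non-trivial elements: type I involutions $s_i = \Id_{A_i} - \Id_{A_i^\perp}$, indexed by the unique orthogonal triple of non-degenerate planes $(A_1,A_2,A_3)$ furnished by Proposition~\ref{reduction}, and type II anti-involutions $t_{E,F} = i(\Id_E - \Id_F)$, indexed by the twelve pairs $(E,F)$ of transverse Lagrangians furnished by Proposition~\ref{reduction2}. This gives $1+3+12=16$ candidates. The $s_i$ are already involutions in $\Sp_6$, while $t_{E,F}^2 = -\Id$, so both kinds descend to involutions in $\PSp_6$; they are pairwise distinct there, since $\pm s_i$ and $\pm t_{E,F}$ can be distinguished by their squares and by their $\pm 1$- (resp.\ $\pm i$-) eigenspaces in $\Sp_6$.

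Next, I would verify the group structure. The relation $s_1 s_2 s_3 = \Id$ holds already in $\Sp_6$ (direct check on the decomposition $\CC^6 = A_1 \oplus A_2 \oplus A_3$), so the three $s_i$ together with the identity form a Klein four-subgroup $V \simeq (\ZZ/2\ZZ)^2$. A short computation in a basis adapted to a decomposition $A_i = E_i \oplus F_i$ shows that $s_j \cdot t_{E,F}$ is again a type II element $t_{E',F'}$, where $(E',F')$ is obtained from $(E,F)$ by swapping $E_i \leftrightarrow F_i$ for the two indices $i \neq j$; hence $V$ acts freely on the twelve $t_{E,F}$'s by left multiplication, partitioning them into three $V$-orbits of size four. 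Combined with the observation from the proof of Proposition~\ref{reduction2} that any two type II elements $t,t'$ either commute or anti-commute in $\Sp_6$, this implies that the 16 candidates pairwise commute in $\PSp_6$, are closed under multiplication, and all have order dividing $2$; therefore they form a subgroup isomorphic to $(\ZZ/2\ZZ)^4$.

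The main obstacle is the bookkeeping in the second step: one must ensure that no product of our generators produces a new element outside the list of sixteen. The cleanest way is to observe that passage to $\PSp_6$ turns $\Sp_6$-anticommutation into $\PSp_6$-commutation, so that the subgroup generated by our elements is abelian of exponent~$2$; its order is then at most $16$ by the candidate count, and exactly $16$ by the distinctness argument above.
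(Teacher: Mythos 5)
Your construction of the sixteen candidates and the verification that they form a $(\ZZ/2\ZZ)^4$ are correct and close to the paper's; the relation $s_1s_2s_3=-\Id$, the free $V$-action on the twelve type~II elements by swapping $E_i\leftrightarrow F_i$, and the observation that $\Sp_6$-anticommutation becomes $\PSp_6$-commutation are all fine. But there is a genuine gap at the very first step, when you assert that the type~II elements of the stabilizer are ``indexed by the twelve pairs $(E,F)$ of transverse Lagrangians furnished by Proposition~\ref{reduction2}.''

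Proposition~\ref{reduction2} only counts pairs $(E,F)$ subject to the extra hypothesis that \emph{both $E$ and $F$ meet all three $A_i$'s non-trivially}, and its proof never considers a pair that fails this condition. The analyses of \S\ref{g9:unip} and \S\ref{g9:smspl} tell you that a type~II element of the stabilizer is an anti-involution $i(\Id_E-\Id_F)$ with $P$ split as a sum of two lines in the two summands of $\Delta$; they say nothing about the position of $(E,F)$ relative to $(A_1,A_2,A_3)$. So you cannot conclude directly that the candidate list of sixteen is exhaustive, and the ``at most $16$ by the candidate count'' step is unjustified. The paper closes precisely this gap by showing that any type~II element $r$ of the stabilizer \emph{commutes} with each type~I involution $a$: the alternative $ra=-ar$ would force $r$ to exchange the two eigenspaces of $a$, which is impossible since they have different dimensions ($2$ and $4$). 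Commutation forces $r$ to preserve each $A_i$, and since $A_i$ is non-degenerate of dimension $2$ while the eigenspaces $E,F$ of $r$ are isotropic, $r$ must act on $A_i$ with both eigenvalues, i.e.\ $E$ and $F$ each meet $A_i$ in a line. Only at that point does Proposition~\ref{reduction2} apply and bound the number of type~II elements by twelve. You should insert this commutation argument before invoking Proposition~\ref{reduction2}; once this is done the rest of your proposal goes through.
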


\proof We have established in \S \ref{g9:unip} and
\ref{g9:smspl} that the elements of  $\Sp_6$ that stabilize such a
generic plane $P$ are involutions 
of two possible types I and II as described at the beginning of \S
\ref{s:g9.codim2}.
If $(A_1,A_2,A_3)$ is the unique triple of non degenerate, pairwise orthogonal planes in $\CC^6$ such that $P\subset A_1\otimes A_2\otimes A_3$, the stabilizer contains the involutions 
$\pm \Id_{A_1}\pm \Id_{A_2}\pm \Id_{A_3}$. These generate in $\PSp_6$ a
copy of $(\ZZ/2\ZZ)^2$.
Moreover, with the previous notations, the three involutions defined
in $\PSp_6$ by $s$, $t$, $u=st$, generate another copy of $(\ZZ/2\ZZ)^2$.
Note that $s$ and $t$ are only defined up to sign, this ambiguity
being absorbed by the first three involutions.
All these elements in $\PSp_6$ commute one with another, although
their representatives in $\Sp_6$ may anticommute.
We thus get a copy of $(\ZZ/2\ZZ)^4$ inside the stabilizer of $P$.

Let us prove that this $(\ZZ/2\ZZ)^4$ is indeed the whole stabilizer
of $P$. Consider an element $r$ of the latter.  
If $r$ is of type I, i.e., an involution $\pm(\Id_A-\Id_{A^\perp})$
for some non degenerate plane $A$ of $\CC^6$, then by
Proposition~\ref{reduction} $r$ is one of
$\pm \Id_{A_1}\pm \Id_{A_2}\pm \Id_{A_3}$ and belongs to our
$(\ZZ/2\ZZ)^4$.
So suppose that $r$ is of type II, i.e., an anti-involution
$i(\Id_E-\Id_F)$ associated to a pair $(E,F)$ of tranverse  
Lagrangian subspaces of $\CC^6$.
Let $a \neq \pm \Id$ be one of the involutions $\pm\Id_{A_1}\pm
\Id_{A_2}\pm \Id_{A_3}$ that stabilize $P$. 
Consider $u=ra$. We know that $u^2=\pm \Id$, so $ra=ar$ or
$ra=-ar$. In the latter case, $r$ permutes the eigenspaces of $a$,
which are two of dimensions $2$ and $4$, a contradiction.
So $a$ and $r$ commute, and therefore $E,F$ are as in
Proposition~\ref{reduction2}. 
\qed

\medskip
Note that the first copy of $(\ZZ/2\ZZ)^2$ considered in this proof acts trivially on $\PP(P)$ since 
$P\subset A_1\otimes A_2\otimes A_3$, so $\Aut_G(P)\simeq
(\ZZ/2\ZZ)^2$ is generated by the three involutions 
$s,t,u$ only. 

On the dual side, we get $\Aut_G(P^\perp)=(\ZZ/2\ZZ)^4$, and this is
the automorphism group of the general codimension two linear section
$X$ of $\LG(3,6)$. This completes the proof of our main Theorem in
genus $9$. 
We have shown more precisely that $\Aut(X)$ consists of two different
types of involutions: three of type I (coming  
from elements of $\Sp_6$ with eigenspaces of dimensions $2$ and $4$),
and $12$ of type II (with two eigenspaces of dimensions $3$).

\begin{prop}
The fixed locus in $X$ of an involution of type I is a Del Pezzo
surface of degree four (not in its anticanonical embedding). The fixed
locus of an involution of type II is the disjoint union of two
Veronese surfaces.   
\end{prop}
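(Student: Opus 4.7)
The plan is to analyze each type of involution by first computing its fixed locus on the ambient $\LG(3,6)$ and then intersecting with $\PP(P^\perp)\subset\PP(V_9)$.

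For type I, with $g = \Id_A - \Id_{A^\perp}$ ($A$ a non-degenerate $2$-plane, $A^\perp$ symplectic of dimension $4$), I would first decompose $V_9 = \wedge^{\langle 3\rangle}\CC^6$ into the $(+1)$-eigenspace $A\otimes\wedge^{\langle 2\rangle}A^\perp$ of dimension $10$ and the $(-1)$-eigenspace of dimension $4$.  A stable Lagrangian $\Pi$ decomposes as $\Pi = (\Pi\cap A)\oplus(\Pi\cap A^\perp)$; since $A$ is non-degenerate, isotropy forces $\dim(\Pi\cap A)=1$, and $\Pi\cap A^\perp$ is Lagrangian in the symplectic $4$-space $A^\perp$.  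Thus the fixed locus on $\LG(3,6)$ is the scroll $\PP(A)\times\LG(2,A^\perp)\cong\PP^1\times\QQ^3$, Plücker-embedded in $\PP^9 = \PP(A\otimes\wedge^{\langle 2\rangle}A^\perp)$.  By Proposition~\ref{reduction}~(2), $P$ lies in the $(+1)$-eigenspace, so $\PP(P^\perp)$ contains the entire $\PP^3$ of the $(-1)$-eigenspace and cuts the $\PP^9$ of the $(+1)$-eigenspace in a codimension-$2$ linear $\PP^7$; since $\LG(3,6)$ has no points in the $(-1)$-eigenspace, the fixed locus in $X$ is the codimension-$2$ linear section $S := (\PP^1\times\QQ^3)\cap\PP^7$.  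Projecting $S$ to $\QQ^3$ identifies it with the intersection of $\QQ^3\subset\PP^4$ with a quadric hypersurface, i.e.\ with the standard anticanonical model of a degree-$4$ del Pezzo surface.  Finally, adjunction on $\PP^1\times\QQ^3$ (with $K = -2H_1 - 3H_2$) gives $-K_S = H_2|_S$, so the embedding line bundle $(H_1+H_2)|_S$ equals $-K_S + L$ with $L = H_1|_S$ a conic-fibration class coming from the projection $S\to\PP^1$, hence not the anticanonical class.

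For type II, with $s = i(\Id_E - \Id_F)$ for transverse Lagrangians $E,F$, the $\pm i$-eigenspaces of $s$ on $V_9$ each have dimension $7$ and split as $\wedge^3 F\oplus K_+$ and $\wedge^3 E\oplus K_-$, where $K_\pm$ are the six-dimensional kernels of the $\omega$-contractions on $\wedge^2 E\otimes F$ and $E\otimes\wedge^2 F$ respectively.  Fixed Lagrangians satisfy $\Pi = (\Pi\cap E)\oplus(\Pi\cap F)$, giving four components of the fixed locus on $\LG(3,6)$: the isolated points $\{E\},\{F\}$ (corresponding to $\wedge^3 E,\wedge^3 F$), and two Veronese surfaces $V_1\cong\PP(E)$ and $V_2\cong\PP(F)$, each embedded via $\ell\mapsto\wedge^3(\ell\oplus\ell^\circ)$ as classical degree-$2$ Veroneses in $\PP(K_-)\cong\PP^5$ and $\PP(K_+)\cong\PP^5$.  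Intersecting with $\PP(P^\perp)$, the isolated points are easily excluded by genericity of $P$, so the main task is to identify the contributions of $V_1$ and $V_2$: they should be analyzed via the block structure of the skew form on $V_9$ (which, relative to the eigenspace decomposition, pairs $\wedge^3 E$ with $\wedge^3 F$ and $K_+$ with $K_-$ independently) combined with the refined structural description of $P$ inside $A_1\otimes A_2\otimes A_3$ from Proposition~\ref{reduction2}.

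The hardest part is the last step for type II: showing that $V_1,V_2$ sit in $\PP(P^\perp)$ in their entirety, equivalently that $K_\pm\subset P^\perp$.  This requires exploiting the very specific position of the generators $p_\pm$ of $P$ inside the four-dimensional $(\pm i)$-parts of the eight-dimensional subspace $A_1\otimes A_2\otimes A_3$ of Proposition~\ref{reduction2}: one must track how the mixed components involved in the contractions and pairings cancel thanks to the pairwise orthogonality of the $A_i$, and how the twelvefold combinatorial symmetry of the type II pairs $(E,F)$ forces the relevant orthogonality uniformly.  Once this containment is granted, the fixed locus in $X$ is exactly $V_1\sqcup V_2$, which is the claim.
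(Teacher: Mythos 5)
Your Type~I argument is correct and takes essentially the same route as the paper's: one checks that the $(-1)$-eigenspace part of the fixed locus on $\LG(3,6)$ is empty (no isotropic $3$-plane can contain $A$ or lie inside $A^\perp$), identifies the $(+1)$-part with $\PP^1\times\QQ^3\subset\PP(A\otimes \wedge^{\langle 2\rangle}A^\perp)$, and observes that a general codimension-two linear section of it is a degree-four del Pezzo surface. Your adjunction computation and the discussion of the projection to $\QQ^3$ are a useful supplement to what the paper leaves implicit.

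For Type~II the plan has a genuine gap, and one that does not appear closable as stated. Your description of the $s$-stable Lagrangians is right: the fixed locus of $s$ on $\LG(3,6)$ is the disjoint union of the two isolated points $[E],[F]$ and the two Veronese surfaces $V_1\subset\PP(K_-)$, $V_2\subset\PP(K_+)$. But the step you defer to last --- that $K_\pm\subset P^\perp$, so that $V_1,V_2$ lie entirely inside $\PP(P^\perp)$ --- is not merely unproven; it fails for generic $P$. Indeed, the $\Sp_6$-invariant symplectic form $\Omega$ on $V_9$ restricts to a perfect pairing $K_+\times K_-\to\CC$ (and, separately, $\wedge^3E\times\wedge^3F\to\CC$), because the two $s$-eigenspaces $U_{\pm i}$ are $\Omega$-isotropic and $K_\pm\wedge\wedge^3E = K_\pm\wedge\wedge^3F = 0$. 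Writing $P = L_1\oplus L_2$ with $L_1\subset \wedge^3E\oplus K_-$ and $L_2\subset\wedge^3F\oplus K_+$, the containment $K_-\subset P^\perp$ is equivalent to $L_2\perp K_-$, i.e.\ to the $K_+$-component of $L_2$ vanishing, i.e.\ to $L_2=\wedge^3 F$ --- a codimension-six condition on the line $L_2$, never satisfied for generic $P$. With the gap unfilled, your route only shows that the fixed locus in $X$ meets each Veronese in a hyperplane section, i.e.\ in a conic.

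You should also be aware that the paper's own proof does something different at precisely this step: it asserts that $\LG(3,6)\cap\PP(U_+)$ is a three-dimensional \emph{cone} over a Veronese surface, with vertex $[\wedge^3E]$, so that a general hyperplane section recovers the Veronese. This is not compatible with the two-dimensional, two-component, reduced description you obtained from classifying $s$-stable Lagrangians (and which is forced by the smoothness of fixed loci of finite-order automorphisms). Resolving this tension --- either by justifying the paper's ``cone'' assertion, or by revisiting the claimed answer --- is necessary before the proof can be considered complete; as it stands, your proposal neither reproduces the paper's argument nor establishes the statement on its own.
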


\proof Suppose that $X$ is defined by $P^\perp$, for $P$ a generic plane of $\wedge^{\langle 3\rangle}\CC^6$. If $s$ is an 
involution of type I, there exists a non isotropic plane $A\subset \CC^6$ such that $P\subset A\otimes\wedge^{\langle 2\rangle}A^\perp$, and 
$s=\Id_A-\Id_{A^\perp}$. We have seen that the eigenspace decomposition of the induced action is 
$$\wedge^{\langle 3\rangle}\CC^6=A\otimes\wedge^{\langle 2\rangle}A^\perp \oplus K,$$
where $K\subset \wedge^2A\otimes A^\perp\oplus \wedge^3A^\perp$ is  the kernel of the contraction map to $A^\perp$ by $\omega$. 
We deduce the eigenspace decomposition of $P^\perp$ as $P_+\oplus P_{-}$, where $P_+$ is the orthogonal to $P$ in 
$A\otimes\wedge^{\langle 2\rangle}A^\perp$ and $P_{-}\simeq K^*$. The fixed locus of $s$ is then the union of the intersections 
of $\LG(3,6)$ with $\PP (P_+)$ and $\PP(P_{-})$. It is easy to check that the latter is empty: the intersection of $\PP(\wedge^2A\otimes A^\perp\oplus \wedge^3A^\perp)$ with $\G(3,6)$ consists in the three-planes that either contain $A$, or are contained in $A^\perp$, and none of those 
is isotropic. The former is a general condimension two linear section of the intersection of $\PP(A\otimes\wedge^{\langle 2\rangle}A^\perp)$
with $\G(3,6)$, which consists in those three planes that are generated by a line of $A$ and an isotropic plane in $A^\perp$. 
Geometrically, this is a codimension two linear section of $\PP^1\times \QQ^3$, hence a Del Pezzo surface of degree four. 

\smallskip 
Now consider the case where $s$ is an involution of type II; then there exist two transverse Lagrangian subspaces $E$ and $F$ 
of $\CC^6$ such that $s=i(\Id_E-\Id_F)$. The induced eigenspace decomposition of  $\wedge^{\langle 3\rangle}\CC^6$ is into 
seven-dimensional isotropic spaces 
$$U_+=\wedge^3E\oplus K_+, \qquad \mathrm{and} \qquad  U_-=\wedge^3F\oplus K_-,$$
where $K_+\subset E\otimes \wedge^2F$ (resp. $K_-\subset F\otimes \wedge^2E$) is the kernel of the contraction map to $F$ (resp. $E$) 
by $\omega$. Moreover, $P$ is generated by two generic lines in $U_+$ and $U_-$, and dually, $P^\perp$ is the direct sum of a general 
hyperplane in $U_+$ with  a general hyperplane in $U_-$. Note that $E$ and $F$ are in duality through $\omega$, and that once we have 
identified $F$ with $E^*$, we get $K_+$ as the image of the natural (Koszul type) map from $S^2E\otimes \wedge^3 E^*$ to $E\otimes \wedge^2 E^*$. 
It is then easy to check that the intersection of $\PP (U_+)\simeq \PP(\CC\oplus S^2E)$ with $\LG(3,6)$ is a cone over a Veronese 
surface. Cutting with a general hyperplane we get the Veronese surface back, and our second claim follows. \qed 

\medskip 
The intersection of $\PP(U_+)$ with $\PP(A_1)\times \PP(A_2)\times \PP(A_3)$ consists of the four points 
$\langle L_1, L_2, L_3\rangle$ plus $\langle L_1, M_2, M_3\rangle $ and its permutations. In general these 
points do not belong to the curve $C_P$, which must be preserved by any automorphism stabilizing $P$. 
Note that $C_P$ is by definition a double cover of $\PP(P)$, the branch locus being the intersection of $\PP(P)$ 
with the tangent variety to $\LG(3,6)$. This tangent variety being a quartic hypersurface (see, e.g., \cite[Proposition 6.4]{lm}), $C_P$ is in fact an elliptic curve. We conclude that the type II involutions must restrict on $C_P$ to translations by two-torsion 
points. This is very similar to what we observed in genus $8$.

\section{Genus $7$}
\setcounter{genus}{7}

Now comes the hardest case. A first mild difficulty is that it involves a spin module for a Lie group of rank
five, with $16$ unipotent orbits and an important number of cases to consider for semisimple elements. From our 
analysis of unipotent and semisimple elements, we will conclude that in the critical codimension four we need to consider 
certain special involutions, related to splitting of spin modules under restriction to smaller Lie groups. 
These splittings are well known as subrepresentations, but we will need to be very specific about the corresponding 
subspaces of the spin module in order to be able to understand the generic automorphism groups. 

\subsection{A brief reminder on spin modules}
\label{s:spin}
Suppose that $V=V_{2n}$ is a complex vector space of dimension $2n$, endowed with a non degenerate 
quadratic form $Q$. Chose a decomposition $V=E\oplus F$ into maximal isotropic subspaces. So
$Q$ vanishes on $E$ and $F$, and defines a perfect duality between $E$ and $F$. 

Let $\Delta:=\wedge^*E$ denote the exterior algebra of $E$. It admits a natural action of $E$ 
defined by the wedge product, and also a natural action by contraction with $Q$. Explicitely, 
if $x=a+\alpha$ with $a\in E$ and $\alpha\in F$, we have for example in degree one 
$$x.e=a\wedge e+Q(\alpha, e)1.$$
If $x'=a'+\alpha'$ is another vector, its action on $x.e$ is given by 
$$x'.(x.e)=a'\wedge a\wedge e+Q(\alpha', a)e-Q(\alpha', e)a+Q(\alpha, e)a'$$
since $\alpha'.1=0$. This is only partially skew-symmetric in $x$ an $x'$, and 
$$x'.(x.e)+x.(x'.e)=\Big( Q(\alpha', a)+Q(\alpha, a')\Big) e = Q(x,x')e.$$
This formula extends if we replace $e$ by any element in $\Delta$. Thus it extends to 
an algebra action of the Clifford algebra $Cl(V,Q)$ on $\Delta$, and then of the spin group $\Spin(V,Q)$ 
by restriction (see \cite{chevalley}). In the sequel we will only need the infinitesimal action of the Lie 
algebra of the spin group, which is simply $\wedge^2V$. This action is readily deduced
from the previous formulas, through the identifications
$$x\wedge x'=\frac{1}{2}(x.x'-x'.x)=x.x'-\frac{1}{2}Q(x,x')1$$
as operators on $\Delta$. Since the action of such an operator on $\Delta$ preserves its 
$\ZZ/2\ZZ$-grading, the spin module $\Delta$ actually splits into the direct sum of the 
two half-spin representations 
$$\Delta_+=\wedge^+E, \qquad \Delta_-=\wedge^-E,$$
which are both irreducible, of the same dimension $2^{n-1}$. 

Note that this construction is not canonical, in the sense that it relies on the initial choice 
of the decomposition $V=E\oplus F$, while the half-spin representations don't (at least up to isomorphism; they are, in 
particular, indistinguishable). This is a major source of
complications, as we will see in the sequel.

In the rest of this section, we shall write $\Delta$ for either one of
the half-spin representations $\Delta_\pm$ of $\Spin_{10}$, and
reserve the notation $\Delta_\pm$ for certain subpieces of our
half-spin representation of $\Spin_{10}$. These subpieces will be defined by
restricting the representation to a copy of
$\Spin_4\times\Spin_6$. Suppose for instance that 
$\CC^{10}$ has been split into the direct sum $U\oplus U^\perp$ for some non degenerate four-plane $U$, 
and that 
$E=E'\oplus E''$ has been chosen to be the sum of an isotropic plane $E'\subset U$ and 
an isotropic three-plane $E''\subset U^\perp$. Then the formulas 
\begin{equation}\label{decspin}
\wedge^+E=\wedge^+E'\otimes \wedge^+E''\oplus \wedge^-E'\otimes \wedge^-E'', 
\end{equation}
\begin{equation}
\wedge^-E=\wedge^+E'\otimes \wedge^-E''\oplus \wedge^-E'\otimes \wedge^+E''
\end{equation}
show that the spin representations indeed split into pieces of the same dimension.
This is actually a general fact, independent of the possibility to
split $E$ compatibly with $U$, as we shall see later on.

\subsection{Unipotent elements}
\label{s:g10.nilpotent}
There are $16$ unipotent orbits in $\Spin_{10}$, corresponding to the $16$ nilpotent orbits 
in $\mathfrak{so}_{10}$. Classically, they are indexed by partitions of $10$ in which even parts
have even multiplicities. Recall that if $\pi$ is such a partition, $\pi^*$ is the dual partition, and 
$n_-(\pi)$ denotes the number of odd parts. The 
codimension of the corresponding orbit $\mathcal{O}_\pi$ is given by 
$$\mathrm{codim} (\mathcal{O}_\pi) = \frac{1}{2}\Big( \sum_i(\pi_i^*)^2 -n_-(\pi)\Big).$$
We deduce that the dimensions are the following:
$$\begin{array}{lll}
Partition \qquad  & Dimension  \qquad 
& Jordan\; type\\
9,1  & 40 & 5,11\\ 7,3 & 38 & 2,6,8 \\ 5,5 & 36 & 1,3,5,7 
\\ 7,1^3 & 36 & 1^2,7^2 \\ 
5,3,1^2 & 34 & 3^2,5^2 
\\ 5,2^2,1 & 32 & 3,4^2,5 
\\ 4^2,1^2 & 32 & 1^3,4^2,5 \\
3^3,1 & 30 & 2^4,4^2 \\ 5,1^5 & 28& 4^4 
\\ 3^2,2^2 & 28 &1^2,2^2,3^2,4 \\ 3^2,1^4 & 26& 1^4,3^4 \\ 3,2^2,1^3 & 24& 1^2,2^4,3^2\\ 2^4,1^2 & 20 & 1^5,2^4,3\\ 3,1^7 & 16 & 2^8\\ 2^2,1^6 & 14& 1^8,2^4
\\ 1^{10} & 0& 1^{16}
\end{array}$$
A representative $X_\pi$ of each nilpotent orbit $\mathcal{O}_\pi$ can be obtained as follows
(see \cite[Recipe 5.2.6]{cm}).
Denote by $(\mu_1,\ldots, \mu_k)$ the odd parts of $\pi$. Denote its even parts by $(\nu_1,\ldots, \nu_k)$,
counted with half their multiplicities (always even). Consider the natural embeddings
$$\mathfrak{so}_{\mu_1}\times\cdots\times\mathfrak{so}_{\mu_k}\times 
\mathfrak{sl}_{\nu_1}\times\cdots\times\mathfrak{sl}_{\nu_l}\subset 
\mathfrak{so}_{\mu_1}\times\cdots\times\mathfrak{so}_{\mu_k}\times 
\mathfrak{so}_{2\nu_1}\times\cdots\times\mathfrak{so}_{2\nu_l}\subset \mathfrak{so}_{10}.$$
Choose regular nilpotent elements $X_i$ in $\mathfrak{so}_{\mu_i}$, $Y_j$ in $\mathfrak{sl}_{\nu_j}$.
Then one can let
$$X_\pi =X_1+\cdots + X_k+Y_1+\cdots +Y_l\in \mathcal{O}_\pi.$$
The restriction of a half-spin representation of $\mathfrak{so}_{10}$ to 
$\mathfrak{so}_{\mu_1}\times\cdots\times\mathfrak{so}_{\mu_k}\times 
\mathfrak{so}_{2\nu_1}\times\cdots\times\mathfrak{so}_{2\nu_l}$ will split into a sum
of tensor products of spin representations for the different factors; this decomposition can easily be obtained inductively, since a half-spin representation of $\mathfrak{so}_{2m}$ will restrict to 
$\Delta_a\otimes \Delta_b$ on $\mathfrak{so}_{2a+1}\times \mathfrak{so}_{2b+1}$ for $m=a+b+1$, and 
to $\Delta_a^+\otimes \Delta_b^+\oplus \Delta_a^-\otimes \Delta_b^-$ on $\mathfrak{so}_{2a}\times \mathfrak{so}_{2b}$ for $m=a+b$;  while a spin representation of $\mathfrak{so}_{2m+1}$ will restrict 
to  $\Delta_a^+\otimes \Delta_b\oplus \Delta_a^-\otimes \Delta_b$ on $\mathfrak{so}_{2a}\times \mathfrak{so}_{2b+1}$ for $m=a+b$. Moreover, restricting a half-spin representation of 
$\mathfrak{so}_{2a}$ to $\mathfrak{sl}_{a}$ yields the even (or odd) exterior algebra of the 
natural representation. 

In order to compute the Jordan type of the action of each $X_\pi$ on a half-spin representation of 
$\mathfrak{so}_{10}$, it is therefore enough to know the Jordan type of the action on a spin
representation of a regular nilpotent element of $\mathfrak{so}_{2m+1}$ for $m\le 4$, and the Jordan
type of the action of a regular nilpotent element of $\mathfrak{sl}_{n}$ on the exterior algebra
of the natural representation, for $n\le 4$. These are given as follows: 
$$\begin{array}{rccrcc}
\mathfrak{so}_{3} & 2 & &\mathfrak{sl}_{2} & 1^2 & 2 \\
\mathfrak{so}_{5} & 4&&\mathfrak{sl}_{3}&1,3& 1,3\\
\mathfrak{so}_{7} &1,7&&\mathfrak{sl}_{4}&1^3,5&4^2 \\
\mathfrak{so}_{9} &5,11&&&& 
\end{array}$$
One finally deduces the Jordan type of  $X_\pi$ on a half-spin representation by using 
Proposition \ref{Jordan-tensor}.  The results are given in the table above. 

Arguing as indicated in \S \ref{s:strategy-nilp}, we conclude that for
$m=4,\ldots,12$, the general $m$-dimensional linear subspace $L \subset
\Delta_+$ has no unipotent element in its stabilizer.


\subsection{Semisimple elements}
Now let $g$ be a semisimple element in $\SO_{10}$, that we may suppose to belong to a 
standard maximal torus. If $\epsilon_1, \ldots , \epsilon_5$ are the diagonal characters 
of this torus, the characters of the half-spin representation are the 
$$\frac{1}{2}(\pm \epsilon_1 \pm \cdots \pm \epsilon_5),$$
with an even number of plus signs. In other words, let
$t_1=s_1^2,\ldots, t_5=s_5^2$ and their inverses 
denote the eigenvalues 
of $g$, with $V_t\subset\CC^{10}$ the eigenspace corresponding 
to the eigenvalue $t$. Then the eigenvalues of the induced action on $\PP(\Delta)$ must be (up to some scalar)
the $s_1^{\pm 1}\cdots s_5^{\pm 1}$, again with an even number of plus signs; up to some scalar, 
these are the same as $1$, the ten products $t_it_j$ (that we call eigenvalues of type I), and the five products $t_pt_qt_rt_s$ (type II).  


The roots are $t_i^{\pm 1}t_j ^{\pm 1}$ for all $1\leq i < j \leq
5$.
The Weyl action is generated by the permutations of the $t_i$'s, and
the exchanges of two $t_i$'s with their inverses.
In particular, a relation of the form
$t_it_j=1$ can always be replaced by $t_i=t_j$ since we have the 
freedom of exchanging $t_j$ with $t_j^{-1}$; such a relation will
therefore take us to another type.

\subsubsection{Regular case}
This means that we consider those semisimple $g\in \SO_{10}$ such that
the $t_i$'s are pairwise distinct.
Then the conjugacy class of $g$ has dimension $40$.

We may assume that no eigenvalue of type I equals $1$, because a
relation $t_1t_2=1$ may be replaced by $t_1=t_2$ by acting with the
Weyl group, and this takes us to another regularity class.
The ten eigenvalues of type I can at worst collapse in pairs, because
of the regularity assumption on $g$. Type II eigenvalues are pairwise
distinct. It is possible to pair a type I eigenvalue with one of type
II, but then these two eigenvalues cannot be paired with anything
else.
It is also possible to pair a type II eigenvalue with $1$, but up to
the Weyl action this is equivalent to a collapsing between two type I
eigenvalues. 

The upshot is that the eigenvalues for the action on $\Delta$ can
collapse at most in pairs, and we can make at most $7$ pairs (by
pairing each type II with one type I, and making two pairs of pure
type I). One checks that \eqref{OK} holds in all cases:
by monotonicity it suffices to check the case of the eigenspace
decomposition $[2^7,1^2]$, which is done in our python verification
package.

\subsubsection{Subregular case}
This means that only two eigenvalues coincide; 
suppose this is $t_4=t_5$. For the action on $\Delta$, the eigenvalues
are 
\[
\begin{array}{rrr}
  1 \ (1) & t_1t_2, t_1t_3, t_2t_3 \ (1) & t_1t_2t_3t_4 \ (2)
  \\
          & t_1t_4, t_2t_4, t_3t_4 \ (2) & t_1t_2t_4^2 \ (1)
  \\
          & t_4^2 \ (1) & t_1t_3t_4^2 \ (1)
  \\
          && t_2t_3t_4^2 \ (1)
\end{array}
\]
For generic values of the $t_i$'s, the conjugacy class of $g$ has
dimension $38$.

Among type I, $t_1t_2, t_1t_3, t_2t_3$ are pairwise distinct, and so
are $t_1t_4, t_2t_4, t_3t_4, t_4^2$.
The type II eigenvalues are pairwise distinct, 
as always in a given degeneration stratum.
Moreover $t_1t_2t_3t_4$ may be paired only with $1$ and $t_4^2$.
It follows that the eigenspaces have dimension at most
$4$ at any rate.
It follows that \eqref{OK} holds in all
cases, except if we have a partition containing either $[4^2,3^2]$ or
$[4^3]$:
to prove this, we (i) enumerate all possible partitions of $16$ with
pieces of size not greater than $4$,
(ii) remove from the list all those that contain either $[4^2,3^2]$ or 
$[4^3]$,
(iii) select maximal partitions (with respect to the partial order
relation described in Remark~\ref{rk:monotone}),
and (iv) apply Proposition~\ref{dim-stable} (1) to all
of them; this is carried out with our python script.



It is not possible that
$t_1t_2, t_1t_3, t_2t_3$ collapse all at the same time with
$t_3t_4, t_2t_4, t_1t_4$ respectively  without
violating our subregularity assumption,
but it is possible that
$t_1t_2, t_1t_3, t_1t_2t_3t_4$ collapse with
$t_3t_4, t_2t_4, t_4^2$, iff
$t_2,t_3,t_4$ equal $-1,1,-t_1$ respectively (up to exchanging $t_2$
and $t_3$).
This may conveniently be checked using Macaulay2, see the ancillary
files listed in \S \ref{s:anciliary}. 
In the latter case, the eigenvalues are (writing $t$ for $t_1$)
\[
\begin{array}{rrr}
  \pm 1 \ (1) & \pm t, \pm t^2 \ (3) & \pm t^3 \ (1)
\end{array}
\]
(note that we automatically have the fourth collapsing
$t_1t_4=t_2t_3t_4^2$).
The special values $t=\pm i$ are forbidden by the subregularity
assumption (otherwise $t_1t_4=1$), so we obtain at most the partition
$[3^4,2^2]$, and it is not possible to end up with a partition
containing either $[4^2,3^2]$ or $[4^3]$, hence \eqref{OK} always
holds by what we have said above.

The other possibilities amount to those investigated above
by taking the Weyl action into account.
For instance, if $t_1t_4, t_2t_4$ collapse with
$t_2t_3t_4^2, t_1t_3t_4^2$, we have $t_1=t_2t_3t_4$
and $t_2=t_1t_3t_4$, and those relations may be changed into
$t_1t_4=t_2t_3$ and $t_2t_4=t_1t_3$ by exchanging $t_4,t_5$ with their
inverses.

\subsubsection{$t_3=t_4=t_5$}
In this case we assume that no root other than
$t_3/t_4,t_3/t_5,t_4/t_5$ and their inverses take the value $1$, so
the conjugacy class of $g$ has dimension $34$.
The eigenvalues of the action of $g$ on $\Delta$
are
\[
\begin{array}{rrr}
  1 \ (1) & t_1t_2 \ (1) & t_1t_2t_3^2 \ (3)
  \\
        & t_1t_3 \ (3) & t_1t_3^3 \ (1)
  \\
        & t_2t_3 \ (3) & t_2t_3^3 \ (1)
  \\
        & t_3^2 \ (3)
\end{array}
\]

The eigenvalues with multiplicity $3$ may collapse with at most one
other, the latter necessarily with multiplicity $1$.
Thus we cannot get any eigenspace of dimension larger than $4$, and
this is enough for \eqref{OK} to hold in all cases
(we verify this in the same way as in the subregular case, using our 
python script).

\subsubsection{$t_2=t_3$ and $t_4=t_5$}
The conjugacy class of $g$ has dimension at most $36$, and may be
strictly smaller if $t_2^2$ or $t_4^2$ take the value $1$.
The eigenvalues of the action on
$\Delta$ are
\[
\begin{array}{rrr}
  1 \ (1) & t_1t_2 \ (2) & t_1t_2^2t_4 \ (2)
  \\
        & t_1t_4 \ (2) & t_1t_2t_4^2 \ (2)
  \\
        & t_2t_4 \ (4) & t_2^2t_4^2 \ (1)
  \\
        & t_2^2 \ (1)
  \\
  & t_4^2 \ (1)
\end{array}
\]
We make the following observations:
\begin{mycpctenum}[i)]
\item $t_1t_2$ may collapse only with
  a) $t_4^2$, b) $t_1t_2t_4^2$, c) $t_2^2t_4^2$;
  b) happens iff $t_4^2=1$ and excludes a) and c);
  a) and c) may happen at the same time, iff $t_2^2=1$.
  A strictly analogous observation holds for all the eigenvalues with
  multiplicity $2$.
\item $t_2t_4$ may not collapse with any other eigenvalue.
\end{mycpctenum}
It follows that the eigenspaces in $\Delta$ have at most dimension
$4$. Then \eqref{OK} holds for all cases, except for
$m=4$ if the partition is $[4^4]$ and the conjugacy class has
dimension $36$, as one verifies as in the previous cases (again this
is included in our python verifications).
But if the partition is $[4^4]$, then necessarily either
$t_2^2$ or $t_4^2$ equals $1$, hence the conjugacy class of $g$ has
dimension at most $34$ so that \eqref{OK} holds in this case as well,
as has been verified in the previous case.

\subsubsection{$t_1=t_2$ and $t_3=t_4=t_5$}
For generic values of $t_1$ and $t_3$, exactly $8$ roots take the
value $1$, hence the dimension of the conjugacy class is at most
$32$.
The eigenvalues of the action on $\Delta$ are
\[
\begin{array}{rrr}
  1 \ (1) & t_1^2 \ (1) & t_1^2t_3^2 \ (3)
  \\
        & t_1t_3 \ (6) & t_1t_3^3 \ (2)
  \\
        & t_3^2 \ (3) 
\end{array}
\]
We observe that:
\begin{mycpctenum}[i)]
\item $t_1t_3$ may only collapse with $t_1t_3^3$, iff
  $t_3^2=1$;
\item $t_3^2$ may only collapse with $1,t_1^2,t_1^2t_3^2$, and a
  similar statement holds for $t_1^2t_3^2$.
\end{mycpctenum}

Let us first consider the main degeneration stratum for this case,
i.e., we assume that neither $t_1^2$ nor $t_3^2$ equals $1$. Then
$t_1t_3$ cannot collapse, $t_3^2,t_1^2t_3^2$ may only collapse with
$t_1^2,1$ respectively, and the two latter are mutually exclusive, and
$t_1t_3^3$ may only collapse with $1$ or $t_1^2$. So at most we have
the partition $[6,4,3^2]$, and \eqref{OK} holds in all cases
(we verify this case with our python script, and the other follow
by monotonicity).

Now assume that $t_1^2=1$. Then the conjugacy class of $g$ has
dimension at most $30$ (we have the two extra relations
$t_1t_2=t_1^{-1}t_2^{-1}=1$). In this case the eigenvalues become
\[
\begin{array}{rrrr}
  1 \ (2) & t_1t_3 \ (6) & t_3^2 \ (6) & t_1t_3^3 \ (2)
\end{array}
\]
If $t_3^2\neq 1$, the only possible further collapsing is
$1=t_1t_3^3$, which gives the partition $[6^2,4]$, and then \eqref{OK}
holds in all cases, as we verify following our usual method.
If $t_3^2= 1$, we obtain the partition $[8^2]$ and the conjugacy class
has dimension $24$ ($6$ new relations $t_3t_4,t_3t_5,t_3t_4$ and their
inverses).
In this case we find a $48$-dimensional family of pairs $(L,g)$ with
$g.L=L$ by considering those $L$ that are the sums of two $2$-planes
in each of the two $8$-dimensional eigenspaces.
Note that in this case, letting $U$ be the $4$-plane sum of the
four eigenlines of respective weights $t_1,t_2,t_1^{-1},t_2^{-1}$
(which all take the same value $\pm 1$ on $g$), one
has $g = \pm(\Id_U-\Id_{U^\perp})$.
\gag
\label{gag7}

Eventually, let us check the case when $t_3^2=1$ but $t_1^2\neq 1$.
Then the conjugacy class has dimension $26$ and the eigenvalues are
\[
\begin{array}{rrr}
  1 \ (4) & t_1t_3 \ (8) & t_1^2 \ (4).
\end{array}
\]
There is no possible further collapsing, and
\eqref{OK} holds in all cases.

\subsubsection{$t_2=t_3=t_4=t_5$}
The eigenvalues on $\Delta$ are
\[
\begin{array}[c]{rrrr}
  1 \ (1) & t_1t_2 \ (4) & t_1t_2^3 \ (4)
  \\
        & t_2^2 \ (6)&  t_2^4 \ (1) 
\end{array}
\] 
Let us first consider the case $t_2^2\neq 1$.
Then the only possible collapsings are $t_1t_2=t_2^4$, iff
$t_1=t_2^3$, and $t_1t_2^3=1$, iff $t_1^{-1}=t_2^3$. So at most we
have the partition $[6,5^2]$, and the conjugacy class of $g$ has
dimension $28$, so that \eqref{OK} holds in all cases.

If $t_2^2=1$ the eigenvalues become
\[
\begin{array}[c]{rrrr}
  1 \ (8) & t_1t_2 \ (8)
\end{array}
\]
and there is no further possible collapsing. In this case
the conjugacy class has dimension $16$, and \eqref{OK} holds in all
cases.

\subsubsection{$t_1=t_2=t_3=t_4=t_5$}
The eigenvalues on $\Delta$ are
\[
\begin{array}[c]{rrrr}
  1 \ (1) & t^2 \ (10) & t^4 \ (5)
\end{array}
\]
If $t^2=1$ the action of $g$ on $\Delta$ is trivial so we discard this
case. Thus $t^2\neq 1$, the conjugacy class of $g$ has dimension $20$,
and the only possible collapsing is $t^4=1$, which gives the partition
$[10,6]$. \eqref{OK} holds in all cases.

\subsubsection{Conclusion}\label{concl:Spin_10}
The stabilizer of a generic subspace  $P \subset \Delta$ of 
dimension $5$ to $11$ is trivial.
If $P$ has dimension $4$ or $12$, non-trivial elements in its
stabilizer must be of the kind described in case \ref{gag7} above.

\subsection{Codimension four}

By the previous study,  a general $4$-plane $P\subset\Delta$ may only
be stabilized by a finite number of involutions $\pm t_U$, with
$t_U=\Id_{U}-\Id_{U^\perp}$ in $\SO_{10}$ for some 
non-degenerate four-plane $U \subset \CC^{10}$.
The restriction of the half-spin representation $\Delta$
to $\fso(U)\times\fso(U^\perp)$ decomposes into
the direct sum of two eight-dimensional sub-representations $\Delta_+$
and $\Delta_-$
(recall our notation convention at the end of
subsection~\ref{s:spin}),
and $P$ is stabilized by the induced action of $t_U$ 
if and only if it is the direct sum of two $2$-planes
$P_+\subset \Delta_+$ and $P_-\subset \Delta_-$.

We shall prove (see Theorem~\ref{t:g7.concl}) that there exist exactly 
three non degenerate four-planes $U,V,W\subset\CC^{10}$ satisfying the
above conditions, hence $P$ is only stabilized by the three
corresponding involutions $t_U,t_V,t_W$. 
We will see that $U, V, W$ must be in 
very special relative position: their pairwise intersections will be 
non degenerate planes $A,B,C$. 

This will be the conclusion of a  detour, along which 
we will need to understand spin modules and their splittings 
under restrictions to such subalgebras of $\fso_{10}$ as 
$\fso(U)\times\fso(U^\perp)$.

\subsubsection{How to split a spin module in eight dimensions}
\label{s:split8}
As a warm-up, 
let $V_4\subset\CC^{8}$ be a non degenerate four-dimensional subspace, and $V'_4=V_4^\perp$ its orthogonal 
with respect to the quadratic form $Q$. The restriction of the spin 
representation $\Delta$ to $\fso(V_4)\times \fso(V'_4)$ splits into two four-dimensional submodules. 
How can we identify them
concretely? 

\smallskip
Recall that the half-spin representations can be defined as $\Delta_+=\wedge^+E$ and $\Delta_-=\wedge^-E$, once a splitting 
$\CC^{8}=E\oplus F$ into transverse Lagrangian subspaces has been fixed. This clearly implies that there is a natural 
equivariant map from $\CC^{8}\otimes\Delta_\pm$ to $\Delta_\mp$, defined by wedge products and contractions by vectors 
of $E$ and $F$ in $\CC^8$. Iterating, one obtains natural morphisms from $\wedge^{2p}\CC^{8}\otimes\Delta_\pm$ to $\Delta_\pm$, for any $p$. 

In particular, in $\wedge^{4}\CC^{8}$ we can consider the Pl{\"u}cker line associated to 
$V_4$. This line induces an endomorphism $\psi_{V_4}$ of $\Delta_\pm$, well defined up to scalars. 
Being canonically defined by $V_4$, the eigenspace decomposition of this endomorphism must be compatible with 
the structure of $\Delta_\pm$ as a module over $\fso(V_4)\times \fso(V'_4)$. So if $\psi_{V_4}$ is not a homothety, 
this endomorphism has no other choice than to admit two four dimensional eigenspaces: the two submodules of the 
decomposition. This is indeed what will happen, and this yields an efficient method in order to locate concretely 
these
submodules (the existence of which we know a priori only by abstract arguments).

\smallskip
Now suppose that $V_4$ is transverse to $F$, so that it can be defined as the graph of a morphism $\Gamma\in \Hom(E,F)$. If $V_4$ is 
also transverse to $E$, this morphism is an isomorphism. In this case, a crucial observation is that 
one can define a canonical element of $\wedge^4E$, up to sign, by letting 
$$\gamma =\frac{v_1\wedge v_2\wedge v_3\wedge v_4}{\det
  Q(v_i,\Gamma(v_j))^{1/2}}$$ 
with $(v_1,v_2,v_3,v_4)$ any basis of $E_4$.

\begin{prop}
\label{p:spin4}
As representations of $\fso(V_4)\times \fso(V'_4)$, the half-spin representations $\Delta_+$ and $\Delta_-$ split 
into $\Delta_+=\delta_+\oplus\delta_-$ and $\Delta_-=\delta'_+\oplus\delta'_-$, where 
$$\delta_{\pm} = \langle 1\pm\gamma, \;
\theta\pm\Gamma(\theta).\gamma, \;\theta\in\wedge^2E\rangle, \qquad  
\delta'_{\pm}=\langle e\pm\Gamma(e).\gamma, \; e\in E\rangle.$$
\end{prop}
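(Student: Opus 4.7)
My plan is to identify $\delta_\pm$ and $\delta'_\pm$ with the $\pm 1$-eigenspaces of the canonical endomorphism $\psi = \psi_{V_4}$ of $\Delta$ introduced just before the statement. Since $\psi$ is built from the Pl\"ucker line $\wedge^4 V_4 \subset \wedge^4 \CC^8$, it commutes with the subalgebra $\fso(V_4) \oplus \fso(V'_4) \subset \fso(\CC^8)$: the elements of this subalgebra preserve $V_4$ and act on $\wedge^4 V_4$ with vanishing trace. Hence its eigenspaces are automatically sub-representations of $\fso(V_4) \times \fso(V'_4)$, and it only remains to check that they are spanned by the vectors written in the statement.

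To do so I would realize $\psi$ as Clifford multiplication by the antisymmetrization of $v_1 v_2 v_3 v_4 \in \mathrm{Cl}(\CC^8, Q)$, for any basis $(v_i)$ of $V_4$. A direct computation in the Clifford algebra shows that $\psi^2$ is a scalar multiple of the identity, and the normalization $\det Q(v_i, \Gamma(v_j))^{1/2}$ built into the definition of $\gamma$ is precisely the factor making $\psi$ an involution. To identify its eigenspaces, one expands the Clifford product $(e_1 + \Gamma(e_1)) \cdots (e_4 + \Gamma(e_4))$ and sorts the sixteen resulting monomials by bidegree in $(E,F)$. On each graded piece $\wedge^k E$ of $\Delta$, only a few of them contribute: on $1 \in \wedge^0 E$ only the pure wedge produces a non-zero output, namely a multiple of $\gamma$; symmetrically on $\gamma \in \wedge^4 E$ only the pure quadruple contraction survives, giving back $1$; on $\theta \in \wedge^2 E$ and on $e \in \wedge^1 E$, only the terms combining wedgings and contractions in the complementary amounts survive, and one recognizes in them respectively $\Gamma(\theta) \cdot \gamma \in \wedge^2 E$ and $\Gamma(e) \cdot \gamma \in \wedge^3 E$, up to the normalizations chosen.

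It follows that the displayed vectors in the statement are indeed $\pm 1$-eigenvectors of $\psi$, hence span the two submodules. The dimensions $\dim \delta_\pm = \dim \delta'_\pm = 4$ follow from the facts that the induced involution $\theta \mapsto \Gamma(\theta) \cdot \gamma$ on the six-dimensional $\wedge^2 E$ has two three-dimensional $\pm 1$-eigenspaces (it is, up to the identification induced by $\Gamma$, the Hodge star for the non-degenerate pairing $\wedge^2 E \otimes \wedge^2 E \to \wedge^4 E \simeq \CC$), and that the involution $e \leftrightarrow \Gamma(e) \cdot \gamma$ exchanging $E$ and $\wedge^3 E$ has two four-dimensional $\pm 1$-eigenspaces. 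The main technical hurdle will be the bookkeeping of signs and scalar factors in the Clifford expansion, in particular verifying that the specific normalization of $\gamma$ chosen in the statement does produce $\psi^2 = \mathrm{id}$ rather than some other scalar.
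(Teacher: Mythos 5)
Your proposal is correct and follows the same route as the paper: identify $\delta_\pm$, $\delta'_\pm$ with the eigenspaces of the endomorphism $\psi_{V_4}$ and verify this by an explicit Clifford-algebra computation. The paper's proof is just the phrase ``an explicit computation shows that $\psi_{V_4}$ acts, as expected, as homotheties on each of these subspaces, with opposite factors,'' together with the remark that by $\Spin_8$-equivariance one may check a single $V_4$; your more detailed sketch (orthogonalizing the basis of $V_4$, sorting the bidegree-$(k,4-k)$ pieces, identifying the induced involutions on $\wedge^2 E$ and $E\oplus\wedge^3 E$ as a Hodge-type star and a swap) is exactly the content of that ``explicit computation,'' and the sign/normalization bookkeeping you flag at the end is real but unproblematic once one reduces, as the paper does by equivariance, to a basis in which $\Gamma$ is the identity.
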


\begin{proof} An explicit computation shows that $\psi_{V_4}$ acts, as expected, as homotheties on 
each of these subspaces, with opposite factors. (And by equivariance, this computation needs to 
be done only for one specific $V_4$.)
\end{proof}

Observe that since $\gamma$ is only defined up to sign, the two modules $\delta_+$ and $\delta_-$, as well as  $\delta'_+$ 
and $\delta'_-$, are in fact indistinguishable, as must be the rule for half-spin representations.

\subsubsection{How to split a spin module in ten dimensions}
Let now $V_4\subset\CC^{10}$ be a non degenerate four-dimensional subspace, and $V_6=V_4^\perp$ its orthogonal with respect to the quadratic form $Q$. The restriction of a half-spin 
representation $\Delta$ to $\fso(V_4)\times \fso(V_6)$ splits into two eight-dimensional submodules. We want to identify them concretely. 

Suppose that $\Delta$ has been constructed as $\wedge^+E$, where $\CC^{10}=E\oplus F$ is a fixed splitting into Lagrangian spaces. 
When $V_4$ is transverse to both $E$ and $F$, its two projections are isomorphisms onto subspaces $E_4\subset E$ and $F_4\subset F$, 
and $V_4$ can be defined as the graph of an isomorphism $\Gamma\in \Hom(E_4,F_4)$. Moreover, since $E$ and $F$ are in perfect duality through 
the quadratic form $Q$, the hyperplanes $E_4$ and $F_4$ are orthogonal to lines $F_1\subset F$ and $E_1\subset E$, such that in 
general, $E=E_1\oplus E_4$ and $F=F_1\oplus F_4$. Observe that this yields a splitting 
$$\Delta=\wedge^+E= \wedge^+E_4\oplus E_1\otimes\wedge^-E_4.$$

\begin{prop}\label{8dec}
As a representation of  $\fso(V_4)\times \fso(V_6)$, the half-spin representation $\Delta$  splits
into $\Delta=\delta^8_+\oplus\delta^8_-$, where 
$$\delta^8_+=\delta_+\oplus E_1\otimes \delta'_-, \qquad
\delta^8_-=\delta_-\oplus E_1\otimes \delta'_+$$
with $\delta_\pm^{\pprime}$ defined as in Proposition~\ref{p:spin4}.
\end{prop}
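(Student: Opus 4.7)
The plan is to extend the Plücker-line argument from Proposition~\ref{p:spin4} to the present ten-dimensional setting. Define an endomorphism $\psi_{V_4} \in \Hom(\Delta)$, well defined up to scalar, as the action on $\Delta$ of a generator of the Plücker line $\wedge^4 V_4 \subset \wedge^4 \CC^{10}$, where $\wedge^4 \CC^{10}$ acts via the even part of the Clifford algebra. Since $\psi_{V_4}$ is canonically attached to $V_4$, it commutes with the action of $\fso(V_4) \times \fso(V_6)$, and therefore its eigenspaces are automatically submodules for this subalgebra. It thus suffices to show that $\psi_{V_4}$ has two eight-dimensional eigenspaces which coincide with $\delta^8_+$ and $\delta^8_-$.

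Next, I would exploit the inclusion $V_4 \subset E_4 \oplus F_4$, which gives $\wedge^4 V_4 \subset \wedge^4(E_4 \oplus F_4)$, so that the Clifford action defining $\psi_{V_4}$ only involves $E_4$ and $F_4$. In particular $\psi_{V_4}$ commutes with the $E_1$-grading on $\Delta = \wedge^+ E_4 \oplus (E_1 \otimes \wedge^- E_4)$, and hence preserves that splitting. On each summand, the restriction of $\psi_{V_4}$ coincides with the eight-dimensional operator considered in Proposition~\ref{p:spin4} (tensored with the identity on $E_1$ for the second summand). Applying that proposition, the eigenspaces on $\wedge^+ E_4$ are $\delta_+$ and $\delta_-$, while those on $\wedge^- E_4$ are $\delta'_+$ and $\delta'_-$.

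The remaining task is to match signs, that is, to determine which eigenvalue of $\psi_{V_4}$ on $\wedge^+ E_4$ corresponds to which on $\wedge^- E_4$. This amounts to a short explicit Clifford computation on two representative vectors, for instance $1 \in \wedge^0 E_4$ and any $e \in E_4$, carefully tracking the normalization $\gamma$ used to define $\delta_\pm$ and $\delta'_\pm$. The outcome we expect is that the eigenvalues on $\delta_\pm$ and those on $\delta'_\pm$ are related by an overall sign flip, so that the $+$-eigenspace of $\psi_{V_4}$ on $\Delta$ is $\delta_+ \oplus E_1 \otimes \delta'_-$ and the $-$-eigenspace is $\delta_- \oplus E_1 \otimes \delta'_+$.

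The main obstacle, just as in Proposition~\ref{p:spin4}, is this final sign-bookkeeping step; the rest of the argument is essentially formal, relying only on the canonical nature of $\psi_{V_4}$ and its compatibility with the $E_1$-grading. One could alternatively bypass $\psi_{V_4}$ altogether and verify the $\fso(V_6)$-invariance of $\delta^8_\pm$ directly by computing the action of generators of $\fso(V_6)$ lying outside the obvious subalgebra $\fso(V_4) \times \fso\bigl(V_6 \cap (E_4 \oplus F_4)\bigr)$, but this appears noticeably more laborious.
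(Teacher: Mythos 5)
Your proposal matches the paper's proof, which likewise reduces the claim to computing the action of the endomorphism $\psi_{V_4}$ defined by the Pl\"ucker line of $V_4$ (as in Proposition~\ref{p:spin4}) and explicitly leaves that computation to the reader. Your extra observation that $\psi_{V_4}$ preserves the $E_1$-grading, reducing everything to the eight-dimensional operator of Proposition~\ref{p:spin4} plus a final sign-matching, is a helpful way of organizing that same computation.
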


\begin{proof} Exactly as for Proposition~\ref{p:spin4}, this only
  requires the computation of how $\psi_{V_4}$ acts, and only for one
  specific $V_4$. We leave this to the reader.
\end{proof}

\subsubsection{How to split a spin module from a triple of four
  planes}
\label{s:Delta_i}
Now consider the following situation: three orthogonal, non degenerate planes $A, B, C\subset \CC^{10}$ are given, and 
we want to describe a simultaneous splitting with respect to the three four planes $U, V, W$ that are sums of two of those planes. 

\begin{lemma}\label{4dec} There exists a unique decomposition of $\Delta$ into the direct sum of four-dimensional subspaces 
$\Delta_1, \Delta_2, \Delta_3, \Delta_4$ such that the decompositions of $\Delta$ as sums of submodules are given by 
$$\begin{array}{lcl}
\Delta_{|\fso(U)\times \fso(U^\perp)} & = & (\Delta_1\oplus \Delta_2)\oplus (\Delta_3\oplus \Delta_4), \\
\Delta_{|\fso(V)\times \fso(V^\perp)} & = & (\Delta_1\oplus \Delta_3)\oplus (\Delta_2\oplus \Delta_4), \\
\Delta_{|\fso(W)\times \fso(W^\perp)} & = & (\Delta_1\oplus \Delta_4)\oplus (\Delta_2\oplus \Delta_3).
\end{array}$$
\end{lemma}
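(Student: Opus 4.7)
The plan is to exhibit the decomposition by further restricting $\Delta$ to the smaller subalgebra $\fso(A) \times \fso(B) \times \fso(C) \times \fso(D)$, where $D := (A \oplus B \oplus C)^\perp$ is a non-degenerate $4$-plane. Since each of the first three factors is isomorphic to $\fso_2$ with one-dimensional half-spin modules $\Delta_A^\pm, \Delta_B^\pm, \Delta_C^\pm$, and $\fso(D) \cong \fso_4$ has two-dimensional half-spins $\Delta_D^\pm$, iterated application of the $\fso_{2a}\times\fso_{2b}$ branching rule recalled above in the analysis of the unipotent case shows that, as a module over this subalgebra, $\Delta$ splits into eight $2$-dimensional pieces
\[
  \Delta_A^{\epsilon_A} \otimes \Delta_B^{\epsilon_B} \otimes \Delta_C^{\epsilon_C} \otimes \Delta_D^{\epsilon_D},
\]
indexed by sign sequences $(\epsilon_A, \epsilon_B, \epsilon_C, \epsilon_D) \in \{\pm\}^4$ subject to the parity constraint $\epsilon_A\epsilon_B\epsilon_C\epsilon_D = +$ that picks out the chosen half-spin representation of $\fso_{10}$.

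The key observation is that the three splittings of interest are controlled respectively by the three composite signs
\[
  u := \epsilon_A\epsilon_B, \quad v := \epsilon_A\epsilon_C, \quad w := \epsilon_B\epsilon_C,
\]
which automatically satisfy $uvw = +$. Indeed, applying the same branching rule to $\fso(U)\times\fso(U^\perp)$ with $U=A\oplus B$ identifies $\Delta_U^+\otimes\Delta_{U^\perp}^+$ with the sum of the pieces above for which $u = +$, and similarly for the splittings induced by $V$ and $W$. Since the triple $(u,v,w)$ takes exactly four values on the eight pieces, I define $\Delta_i$ for $i=1,2,3,4$ to be the sum of the (two) $2$-dimensional pieces sharing a common value of $(u,v,w)$; each $\Delta_i$ is then $4$-dimensional, and a direct inspection verifies the three asserted simultaneous decompositions.

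Uniqueness is straightforward: any decomposition satisfying the conditions of the lemma must simultaneously refine the three $8$-dimensional splittings, and this forces each $\Delta_i$ to be an intersection of three prescribed halves, which we have just identified with the $(u,v,w)$-homogeneous subspaces. I expect the main subtlety to lie in keeping track of the parity conventions in the iterated branching, and in confirming that the natural identifications among the various $\Delta_X^{\pm}$ introduced by different orderings of the factors do not alter the final combinatorics; but this reduces to a bookkeeping exercise that can be made fully explicit using the elementary model of $\wedge^\bullet E$ recalled in Section~\ref{s:spin}.
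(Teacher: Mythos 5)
Your argument is correct and reaches the same decomposition as the paper, but it is phrased differently. The paper introduces operators $\psi_A,\psi_B,\psi_C$ on $\Delta$ given by Clifford multiplication by the Plücker lines of $A,B,C$ in $\wedge^2\CC^{10}$, normalizes them so that each squares to the identity, observes that (up to scalar) $\psi_U=\psi_A\psi_B$, $\psi_V=\psi_A\psi_C$, $\psi_W=\psi_B\psi_C$, and deduces at once $\psi_W=\psi_U\psi_V$; the $\Delta_i$ are then the simultaneous eigenspaces of $\psi_U,\psi_V$. Your route instead branches $\Delta$ all the way down to $\fso(A)\times\fso(B)\times\fso(C)\times\fso(D)$ and keeps track of the eight sign vectors $(\epsilon_A,\epsilon_B,\epsilon_C,\epsilon_D)$. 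The two viewpoints are equivalent: your signs $\epsilon_A,\epsilon_B,\epsilon_C$ are precisely the eigenvalues of $\psi_A,\psi_B,\psi_C$ on each branching piece, and your constraint $uvw=+$ is the combinatorial shadow of the operator identity $\psi_W=\psi_U\psi_V$. The paper's version is a bit more compact because the multiplicativity of the $\psi$'s does the sign bookkeeping for you; your version is more elementary and makes the two-dimensional fine pieces explicit. The ``subtlety'' you flag about the ambiguity of $\Delta^\pm_X$ labels is real but benign: flipping $\epsilon_A$ flips $u$ and $v$ but not $w$, which permutes the four $\Delta_i$ in pairs, consistent with the remark after the Lemma that the $\Delta_i$ are only defined up to pairwise permutation. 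Your uniqueness argument (each $\Delta_i$ is the intersection of three prescribed $8$-dimensional halves) agrees with what is implicit in the paper.
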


Note that $\Delta_1, \Delta_2, \Delta_3, \Delta_4$ are well defined only up to permutations by pairs. 

\proof The four dimensional space $U$ defines a line in $\wedge^4\CC^{10}$, hence an endomorphism $\psi_U$ of $\Delta$, up to a scalar. 
This operator has two eigenspaces of dimension eight, corresponding to two opposite eigenvalues, which are nothing else than the 
two components of the restriction of $\Delta$ to $\fso(U)\times \fso(U^\perp)$. 

If $U=A\oplus B$, we also have two associated operators $\psi_A$ and
$\psi_B$ defined by the Pl{\"u}cker lines of $A$ and
$B$ in $\wedge^2 \CC^{10}$.
The orthogonality of $A$ and $B$ implies that  $\psi_U$ 
is proportional to $\psi_A\psi_B=\psi_B\psi_A$.  Moreover $\psi_A$ and $\psi_B$ also have two eigenspaces of dimension eight, corresponding to two 
opposite eigenvalues, which are the two components of the restriction of $\Delta$ to $\fso(A)\times \fso(A^\perp)$ and 
 $\fso(B)\times \fso(B^\perp)$. We can normalize them, up to a sign, so that the two eigenvalues are $\pm 1$, and then normalize
 $\psi_U$ as $\psi_A\psi_B$. 
 
 If $V=A\oplus C$ and $W=B\oplus C$, with the same normalizations we get that $\psi_V=\psi_A\psi_C$ and $\psi_W=\psi_B\psi_C$. 
 But then, since $\psi_C^2=1$ we deduce that $\psi_W=\psi_U\psi_V$. This implies the claim after simultaneous diagonalization
 of $\psi_U$ and $\psi_V$. \qed

\subsubsection{Conclusion of the proof}
We are now set to deduce the following result from the above analysis. 

\begin{theorem}\label{t:g7.concl}
Let $P\subset\Delta$ be a general $4$-dimensional subspace.
There exists a triple $(A,B,C)$ of mutually orthogonal non-degenerate 
planes in $\CC^{10}$ such that $P$ is stabilized by the three
involutions $t_U,t_V,t_W$ associated to the four-planes $U=A\oplus B$, $V=A\oplus
C$, $W=B\oplus C$, and only by those involutions (and their opposites).
\end{theorem}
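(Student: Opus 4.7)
The plan is to construct the triple $(A,B,C)$ for a generic $P$ via a dimension count on a suitable parameter space, and then verify that the three associated involutions $t_U, t_V, t_W$ exhaust the stabilizer.

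\textbf{Step 1 (Existence of a triple):} Consider the parameter space
\[
\mathcal{T} = \bigl\{ \bigl((A,B,C), (\ell_1, \ell_2, \ell_3, \ell_4)\bigr) : A,B,C \text{ mutually orthogonal non-degenerate 2-planes in } \CC^{10},\ \ell_i \subset \Delta_i \bigr\},
\]
where $\Delta = \Delta_1 \oplus \Delta_2 \oplus \Delta_3 \oplus \Delta_4$ denotes the decomposition provided by Lemma~\ref{4dec}. The space of orthogonal triples has dimension $16+12+8 = 36$ (by choosing successively $A$ in $\Gr(2, \CC^{10})$, $B$ in $\Gr(2, A^\perp)$, $C$ in $\Gr(2, (A\oplus B)^\perp)$), and the line choices contribute another $4\cdot 3 = 12$, so $\dim \mathcal{T} = 48 = \dim \Gr(4, \Delta)$. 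The natural map
\[
\Phi: \mathcal{T} \to \Gr(4,\Delta), \qquad \bigl((A,B,C), (\ell_i)\bigr) \mapsto \bigoplus_{i=1}^{4} \ell_i,
\]
factors through the locus of $P$'s stabilized by the three involutions $t_U, t_V, t_W$ attached to $U = A \oplus B$, $V = A \oplus C$, $W = B \oplus C$. To conclude that $\Phi$ is dominant, we will verify that its differential is surjective at a conveniently chosen base point, in the spirit of the tangent-space computations in Lemma~\ref{normal} and Proposition~\ref{reduction}(1); here the $\Delta_i$'s are described explicitly by iterating Propositions~\ref{p:spin4} and~\ref{8dec} to unwind the spin module structure under the subalgebra $\fso_2 \times \fso_2 \times \fso_2 \times \fso_4 \subset \fso_{10}$ adapted to the decomposition $\CC^{10} = A \oplus B \oplus C \oplus R$ with $R = (A\oplus B\oplus C)^\perp$.

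\textbf{Step 2 (Completeness of the stabilizer):} From the analysis of Section~\ref{concl:Spin_10}, the stabilizer $H \subset \mathrm{PSO}_{10}$ of a generic $P$ is a finite group whose non-trivial elements are images of involutions $\pm t_{U'}$ for non-degenerate 4-planes $U' \subset \CC^{10}$. Two distinct stabilizing involutions must commute, for otherwise their product would have order greater than $2$, contradicting that $H$ consists of involutions and the identity only. Hence any hypothetical fourth stabilizer $t_{U'}$ commutes with $t_U, t_V, t_W$ and therefore preserves the orthogonal decomposition $\CC^{10} = A \oplus B \oplus C \oplus R$. The only non-degenerate 4-planes compatible with this decomposition are $U, V, W$ and $R$ itself, so it remains to show that $\pm t_R$ does not stabilize a generic $P$. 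But $t_R$ commutes with each $t_U, t_V, t_W$ and hence preserves each $\Delta_i$; for a generic triple $(A,B,C)$ a direct computation shows that $t_R$ acts non-trivially on every $\Delta_i$, so requiring each line $\ell_i$ to be an eigenline of $t_R|_{\Delta_i}$ cuts out a proper subvariety of $\mathcal{T}$.

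The main obstacle will be the tangent-space computation in Step~1. While conceptually analogous to the arguments in the genus~8 and~9 cases (notably Lemma~\ref{normal} and Propositions~\ref{reduction}(1) and~\ref{reduction2}), the iterated construction of the decomposition $\Delta = \bigoplus \Delta_i$ via successive spin module splittings makes the explicit differential calculation the technical heart of the proof.
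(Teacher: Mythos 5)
The student's Step 1 follows the same dimension count as the paper ($36+12=48=\dim\Gr(4,\Delta)$) but proposes to prove dominance by verifying surjectivity of the differential of $\Phi$ at a special point, whereas the paper instead proves injectivity of the map $\{A,B,C\}\mapsto(\Delta_1,\ldots,\Delta_4)$ (Lemma~\ref{injectivity}) by recovering the four-plane $U$ from the spinor-variety geometry of $\SS_{10}\cap\PP(\delta_\pm^8)\simeq\PP^1\times\PP^3$. The tangent-space route is in principle viable, but you explicitly defer the computation (``The main obstacle will be the tangent-space computation''), so Step 1 is a plan, not a proof; in practice it is nontrivial precisely because the $\Delta_i$ have no simple closed-form description.

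The substantive error is in Step 2. You assert that once $t_{U'}$ commutes with $t_U,t_V,t_W$, hence preserves the decomposition $\CC^{10}=A\oplus B\oplus C\oplus R$, ``the only non-degenerate $4$-planes compatible with this decomposition are $U,V,W$ and $R$ itself.'' This is false: $t_{U'}$ preserving each of the blocks $A,B,C,R$ only forces $U'$ to be a \emph{direct sum of subspaces} of $A,B,C,R$, not a union of whole blocks. For example, $U'$ could be $A$ plus a non-degenerate $2$-plane inside $R$, or the sum of three non-isotropic lines in $A,B,C$ plus a line in $R$, and so on. The paper's actual argument diagonalizes the extra involution compatibly with $A\oplus B\oplus C\oplus D$ ($D$ the orthogonal complement), imposes the trace conditions $\mathrm{tr}(t_{U'})=\pm2$, $\mathrm{tr}(t_Ut_{U'})=\pm2$, $\mathrm{tr}(t_Vt_{U'})=\pm2$, finds exactly two families of admissible eigenvalue patterns, each depending on $6$ parameters, and then shows that the extra invariance condition on $P$ reduces the $12$ free line-parameters to $4$; since $6+4<12$, the locus of such $P$'s has dimension at most $46<48$. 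Your sketch (``requiring each $\ell_i$ to be an eigenline cuts out a proper subvariety'') gestures at this parameter count, but it is applied only to $t_R$ with $R=(A\oplus B\oplus C)^\perp$, which is a single involution; to finish you must run the count over the full $6$-dimensional family of candidate four-planes and show the total $36+6+4$ stays below $48$. As written, Step 2 leaves the bulk of the potential extra involutions unexamined.
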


As a direct corollary we obtain that the stabilizer of $P$ in
$\Spin_{10}$ is isomorphic to $(\ZZ/2\ZZ)^2$,
which proves our main Theorem~\ref{symetries} in genus $7$.
Recall that $t_U=\Id_{U}-\Id_{U^\perp}$ and that all elements in the
stabilizer of $P$ are necessarily of this type, see \ref{concl:Spin_10}.

\begin{proof}
The fact that a triple $(A,B,C)$ as in the statement should exist is
indicated by the following dimension count.
Let $\mathcal{U}\subset \G(2,10)^3$ be the variety of unordered orthogonal
triples $\{A,B,C\}$ of non degenerate planes;
it has dimension $36$ 
($16$ parameters for a non isotropic plane $A$, then $12$ for a non
isotropic plane $B$ orthogonal to $A$, finally $8$ for a non isotropic
plane $C$ orthogonal to both $A$ and $B$).
Each $\{A,B,C\} \in \mathcal U$ decomposes 
the half-spin representation $\Delta$ into four $4$-dimensional
subspaces
$\Delta_1\oplus \Delta_2\oplus \Delta_3\oplus \Delta_4$, and by
generality $P$ is stabilized by the three involutions associated to
$U,V,W$ if and only if
it is the direct 
sums of four lines contained in these four subspaces.
This gives
$4\times 3$ additional parameters,
hence in total $36+12=48$ parameters, which is exactly the
dimension of $\G(4,\Delta_+)$.  

To confirm this dimension count, let us consider the map 
$$\varphi :
\{A,B,C\} \in \mathcal{U}
\longmapsto (\Delta_1,\Delta_2,\Delta_3,\Delta_4) \in \G(4,\Delta_+)^4/\Sn[4]$$
(where we need to mod out by the symmetric group
$\Sn[4]$ since the four components of $\Delta_+$ are not
well-defined individually). The folllowing statement will conclude  the proof of the Theorem. 

\begin{lemma}\label{injectivity}
The map
$\varphi$ is injective. 
\end{lemma}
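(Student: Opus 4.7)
The plan is to reconstruct the triple $\{A,B,C\}$ from the unordered four-tuple $\{\Delta_1,\Delta_2,\Delta_3,\Delta_4\}$, passing through the intermediate step of recovering the three four-planes $\{U,V,W\}$. The decomposition of $\Delta$ carries three natural involutions, one for each pair-partition of $\{1,2,3,4\}$: each is defined as $+1$ on the sum of two of the $\Delta_i$'s and $-1$ on the sum of the other two. By construction these involutions are precisely $\pm\psi_U,\pm\psi_V,\pm\psi_W$ of Lemma~\ref{4dec}, and the sign ambiguity will not matter below.

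The key step is to show that the pair $\pm\psi_U$ determines the four-plane $U$. Recall that $\psi_U$ is by construction the action on $\Delta$ of the Pl\"ucker line of $U$, viewed inside $\wedge^4\CC^{10}$ and acting through Clifford multiplication as explained in \S\ref{s:spin}. This defines a $\Spin_{10}$-equivariant linear map $\wedge^4\CC^{10}\to \Hom(\Delta)$. The source is an irreducible $\Spin_{10}$-module, and the map is nonzero since $\psi_U$ has been shown to act non-trivially on $\Delta$; hence the map is injective. It follows that the Pl\"ucker line of $U$ in $\wedge^4\CC^{10}$, and therefore $U$ itself, is recovered from $\psi_U$.

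Applying the same reasoning to $V$ and $W$ yields the unordered triple $\{U,V,W\}$ from the unordered four-tuple of $\Delta_i$'s. One finally recovers $\{A,B,C\}$ as the set of pairwise intersections $\{U\cap V,\, U\cap W,\, V\cap W\}$: because $A,B,C$ are pairwise orthogonal and non-degenerate, each such intersection is honestly two-dimensional and equals one of $A,B,C$.

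The only delicate point is the irreducibility of $\wedge^4\CC^{10}$ as an $SO_{10}$-module, which is a classical fact on which the whole argument hinges. If a self-contained verification is preferred, one can check directly that as a $\Spin_{10}$-module $\Hom(\Delta)\cong\CC\oplus\wedge^2\CC^{10}\oplus\wedge^4\CC^{10}$, so that $\wedge^4\CC^{10}$ occurs exactly once and the Clifford map is injective by Schur's lemma.
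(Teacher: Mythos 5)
Your proof is correct, but it proceeds by a genuinely different argument from the paper's. The paper recovers $U$ from $\delta^8_\pm$ geometrically: it identifies the intersection $\SS_{10}\cap\PP(\delta^8_+)$ with $\PP^1\times\PP^3$ and uses the explicit parametrization of isotropic $5$-planes by points of this Segre to reconstruct $U$ as a span of intersections. You instead argue purely representation-theoretically: the Clifford map $\wedge^4\CC^{10}\to\Hom(\Delta)$ is nonzero, equivariant, and has irreducible source, so Schur's lemma gives injectivity, and hence the Pl\"ucker line of $U$ (a decomposable line in $\wedge^4\CC^{10}$, which determines $U$ uniquely) is recovered from the line $\CC\psi_U$, which in turn is read off from the three pair-partitions of $\{\Delta_1,\ldots,\Delta_4\}$ per Lemma~\ref{4dec}. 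Both routes are sound; yours is shorter and more conceptual, trading the paper's explicit geometry of the spinor variety for a standard irreducibility fact (which, as you note, can also be seen from the multiplicity-one occurrence of $\wedge^4\CC^{10}$ in $\Delta_-\otimes\Delta_+\cong\CC\oplus\wedge^2\CC^{10}\oplus\wedge^4\CC^{10}$). One small point worth making explicit: from the unordered decomposition one only obtains $\psi_U$ up to sign, hence the line $\CC\psi_U$ rather than $\psi_U$ itself; but this is exactly what is needed, since one is recovering the Pl\"ucker \emph{line} of $U$, not a distinguished vector. The final step, recovering $\{A,B,C\}$ as $\{U\cap V, U\cap W, V\cap W\}$, matches the paper's reduction from the injectivity of $\{U,V,W\}\mapsto\{\Delta_1,\ldots,\Delta_4\}$ to that of $\varphi$.
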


\noindent {\it Proof of the Lemma}. Suppose that, like at the end of 
section $6.1$, 
$\CC^{10}$ has been split into the direct sum $U\oplus U^\perp$ for some non degenerate four-plane $U$, 
and that 
$E=E'\oplus E''$ has been chosen to be the sum of an isotropic plane $E'\subset U$ and 
an isotropic three-plane $E''\subset U^\perp$. Then the spin module splits accordingly as in 
equation (\ref{decspin}), that we rewrite as 
$$\Delta_+=\Delta^4_-\otimes \Delta^6_+\oplus \Delta^4_-\otimes \Delta^6_-,$$
where $\Delta^n_\pm$ are copies of the half-spin representations of $\Spin_n$. 

The intersection of the spinor variety $\SS_{10}$ with $\PP(\delta_+^8)=\PP(\Delta^4_+\otimes \Delta^6_+)$ 
is then isomorphic to $\PP^1\times\PP^3$, since the spinor varieties of $\Spin_4=\SL_2\times\SL_2$ and $\Spin_6=\SL_4$
are copies of $\PP^1$ and $\PP^3$, respectively. In particular each $x\in\PP^1$ defines a two-dimensional isotropic subspace 
$U'(x)\subset U$,  and each $y\in\PP^3$ defines a three-dimensional isotropic subspace 
$U''(y)\subset U^\perp$. Hence an explicit isomorphism of $\PP^1\times\PP^3$ with $\SS_{10}\cap\PP(\delta_+^8)$
defined by sending $(x,y)$ to $U(x,y)=U'(x)\oplus U''(y)$. 

In particular we can recover $V'$ from  $\delta_+^8=\Delta^4_+\otimes \Delta^6_+$ through the formula
$$V'=\bigg\langle \bigcap_{y\in\PP^3}U(x,y)\bigg\rangle_{x\in\PP^1}.$$
Of course the same formula holds true if we replace $\delta_+^8$ by $\delta_-^8$. 

Thus the map $U\mapsto \{\delta_+^8, \delta_-^8\}$ of Proposition
\ref{8dec} is injective, and also the map $\{U,V,W\}\mapsto
\{\Delta_1,\Delta_2,\Delta_3,\Delta_4\}$ of Lemma \ref{4dec}, and
therefore $\varphi$ is injective as well. \qed

\medskip\noindent {\it Conclusion of the proof}.
At this point we know that there exists one suitable triple $\{A,B,C\}$, 
and possibly only finitely many others.
There remains to check that there is no other automorphism stabilizing
$P$ than the three involutions provided by the triple $\{A,B,C\}$. Recall that by \ref{concl:Spin_10},
any such automorphism must be an involution of the same type. 
So suppose that $R\subset\CC^{10}$ is another four-plane such that
$t_R$ stabilizes $P$.  Then $t_Ut_R$ must be of the same type (up to
scalar), in particular it must be an involution (up to scalar) and
there must exist $\kappa$ such that $t_Rt_U=\kappa t_Ut_R$. But then
$t_R$ induces an isomorphism between the eigenspaces of $t_U$ with
eigenvalues $\lambda$ and $\kappa\lambda$. Since $t_U$ has only two
eigenspaces and these have different dimensions, this implies that
$\kappa=1$, and $t_R$ commutes with $t_U$. Similarly, it commutes with
$t_V$ and $t_W$. 

Recall that the common eigenspaces decomposition of  $t_U, t_V, t_W$ is $A\oplus B\oplus C\oplus D$, where $D$ is the 
orthogonal to $A\oplus B\oplus C$. The involution $t_R$ can be diagonalized accordingly. Let us denote by $e_0,\ldots ,e_9$ a compatible basis, and by $\epsilon_0,\ldots ,\epsilon_9$ the corresponding eigenvalues of $t_R$. We can express the fact
that $t_R$ has eigenspaces of dimensions $4$ and $6$ by imposing that $tr(t_R)=2\theta$, with $\theta=\pm 1$. 
Similarly, we need $tr(t_Ut_R)=2\theta'$, with $\theta'=\pm 1$. But then $\epsilon_0+\cdots +\epsilon_3=\theta+\theta'$
and  $\epsilon_4+\cdots +\epsilon_9=\theta-\theta'$. So up to replacing $t_R$ by $-t_R$, and reordering, we must have 
$(\epsilon_0,\ldots ,\epsilon_3)=(1,1,1,-1)$ and $(\epsilon_4,\ldots ,\epsilon_9)=(1,1,1,-1,-1,-1)$, or 
$(\epsilon_0,\ldots ,\epsilon_3)=(1,1,-1,-1)$ and $(\epsilon_4,\ldots ,\epsilon_9)=(1,1,1,1,-1,-1)$. 
Performing the same analysis with $V$ and $W$, we conclude that there are (up to sign) only two possibilities for 
$(\epsilon_0,\ldots ,\epsilon_9)$, namely $(1,1,1,-1,1,-1,1,1,-1,-1)$ or $(1,-1,1,-1,1,-1,1,1,1,-1)$. 
Note that in the first case, we have split $B$ and $C$ into the sum of two orthogonal lines, and $D$ into the sum of two orthogonal planes; there are $1+1+4=6$ parameters for such splittings. In the second case, we split $A$, $B$ and $C$ into the sum of two orthogonal lines, and $D$ into the sum of a line and its  orthogonal hyperplane; there are $1+1+1+3=6$ parameters 
for such splittings. 

Then the choice of $P$ is more restricted:
the action of $t_R$ splits each $\Delta_i$ into two $2$-planes,
and $P$ is stabilized by $t_R$ as well if and only if it is the sum of
four lines chosen inside four of the resulting eight two-planes.
The number of parameters therefore drops from $12$ to $4$, 
and since $4+6<12$, we can conclude that $P$ cannot be generic if it
is stabilized by our extra $t_R$.  This ends the proof of
Theorem~\ref{t:g7.concl}.
\end{proof}

\begin{prop}
The fixed locus of any of the three involutions in the automorphism
group of a general six-dimensional Mukai variety
$X=\SS_{10}\cap P^{\perp}$, is the disjoint union of two surface rational
quartic scrolls. 
\end{prop}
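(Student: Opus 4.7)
The plan is to study the fixed locus of one of the three involutions, say $t_U$, by using its action on $\Delta$; the other two cases are symmetric.

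First, I invoke Proposition~\ref{8dec} to get the splitting $\Delta=\delta^8_+\oplus\delta^8_-$ into the two $t_U$-eigenspaces of dimension $8$. Since $t_U$ stabilises $P$, this descends to $P=P_+\oplus P_-$ with $P_\pm\subset\delta^8_\pm$ of dimension $2$, and dually $P^\perp=P_+^\perp\oplus P_-^\perp$ with each $P_\pm^\perp$ of codimension $2$ in $\delta^8_\pm$. The fixed locus of $t_U$ on $\PP(\Delta)$ being the disjoint union $\PP(\delta^8_+)\sqcup\PP(\delta^8_-)$, I obtain
\[
  \mathrm{Fix}(t_U)\cap X
  =\bigsqcup_{\epsilon=\pm}\bigl(\SS_{10}\cap\PP(\delta^8_\epsilon)\cap\PP(P_\epsilon^\perp)\bigr),
\]
so it will suffice to identify each of these two pieces as a smooth rational quartic scroll.

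Next, I will appeal to the identification already used in the proof of Lemma~\ref{injectivity}: under the branching $\fso_{10}\supset\fso_4\times\fso_6$, one has $\delta^8_\epsilon\cong\Delta^4_+\otimes\Delta^6_+$ as spin modules, and correspondingly
\[
  \SS_{10}\cap\PP(\delta^8_\epsilon)\cong\SS_4\times\SS_6\cong\PP^1\times\PP^3
\]
sits inside $\PP(\delta^8_\epsilon)\cong\PP^7$ via the Segre embedding. Each piece of $\mathrm{Fix}(t_U)\cap X$ is therefore a codimension-$2$ linear section of this Segre variety. Projecting to the $\PP^1$-factor, a general such section meets every $\PP^3$-fibre in a $\PP^1$, yielding a $\PP^1$-bundle over $\PP^1$ of degree $4$ inside $\PP^5$, that is, a smooth rational normal scroll of type $S(2,2)$, which is precisely a rational quartic scroll.

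The main obstacle will be to justify that $P_\epsilon^\perp$ is generic enough in $\delta^8_\epsilon$ for the linear section to indeed be a smooth scroll of the expected type rather than a degeneration. This should follow from the description of $P$ obtained in the proof of Theorem~\ref{t:g7.concl}: the simultaneous stability of $P$ under all three involutions $t_U,t_V,t_W$ forces $P=\ell_1\oplus\ell_2\oplus\ell_3\oplus\ell_4$ with $\ell_i\in\PP(\Delta_i)$ generic in the canonical decomposition of Lemma~\ref{4dec}, whence $P_+=\ell_1\oplus\ell_2$ is a general plane in $\Delta_1\oplus\Delta_2=\delta^8_+$, an open dense condition amply sufficient to enforce smoothness of the scroll.
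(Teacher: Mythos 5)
Your proof is correct and follows essentially the same route as the paper's: both reduce the statement to the identification of $\SS_{10}\cap\PP(\delta^8_\epsilon)$ with a Segre $\PP^1\times\PP^3$ and then take a codimension-two linear section. The only meaningful difference is in how you establish this identification: you cite it directly from the argument given in the proof of Lemma~\ref{injectivity}, whereas the paper re-derives it in this proof via the exceptional isomorphisms $\fso(U)\simeq\fsl(S)\times\fsl(T)$, $\fso(U^{\perp})\simeq\fsl(R)$ and the explicit description of the quadrics cutting out $\SS_{10}$ (parametrized by $\CC^{10}$, with those coming from $v\in U$ vanishing identically on $\delta^8_+\cong R\otimes S$ and those from $v\in U^{\perp}\cong\wedge^2 R$ cutting out $\PP(R)\times\PP(S)$). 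The paper's detour gives a self-contained verification that the intersection is \emph{exactly} the Segre variety and not merely contains it; your citation of Lemma~\ref{injectivity} is a legitimate shortcut since that claim is already asserted there. One small caveat: when you argue genericity, note that $P_+=\ell_1\oplus\ell_2$ with $\ell_i\subset\Delta_i$ is \emph{not} a general plane in the $12$-dimensional Grassmannian $\Gr(2,\delta^8_+)$ but only general in a $6$-dimensional subfamily of decomposable planes; the paper also glosses over this point, but strictly speaking one should check that a generic member of this subfamily cuts a smooth scroll (e.g.\ by exhibiting one, or by a dimension count against the tangency locus), rather than invoking plain Bertini.
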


\begin{proof}
Each automorphism is an involution 
$t=t_U$ associated to some four dimensional subspace $U\subset\CC^{10}$, on restriction to which the quadratic
form remains non-degenerate. Moreover the action of $t_U$ on $\Delta$ splits it into two eigenspaces $\Delta_+$ and $\Delta_-$, 
which as $\fso(U)\times \fso(U^{\perp})$ modules are tensor products of spin-modules. More concretely, recall that 
$\fso(U)\simeq\fso_4\simeq\fsl_2\times\fsl_2=\fsl(S)\times\fsl(T)$, with $U\simeq S\otimes T$ and $S$, $T$ the two spin two-dimensional modules. Moreover, another exceptional isomorphism yields $\fso(U^{\perp})\simeq\fso_6\simeq\fsl_4=\fsl(R)$,
with $U^{\perp}\simeq \wedge^2R$ and $R, R^\vee$ the two spin four-dimensional modules. In particular, 
$$\Delta_+\simeq R\otimes S, \qquad \mathrm{and} \qquad \Delta_-\simeq R^\vee\otimes T$$
(up to the exchange of $S$ and $T$). The fixed locus of the action of $t_U$ on $X$ is the union of its intersections
with $\PP(\Delta_+)$ and $\PP(\Delta_-)$. Since $P$ is the direct sum of the two-dimensional spaces $P_+\subset \Delta_+$ and
 $P_-\subset \Delta_-$, we get two disjoint subvarieties $\SS_\pm= \SS^{\pm}_{10}\cap \PP(P_\pm^{\perp})$, where 
 $\SS^{\pm}_{10}=\SS_{10}\cap \PP(\Delta_\pm)$. 
 
 There remains to identify these subvarieties. For that we just need to remember that $\SS_{10}\subset \PP(\Delta)$ is cut-out by quadrics, and that the quadrics vanishing on $\SS_{10}$ are parametrized by $\CC^{10}$ (see e.g.  \cite[5.1]{weyman}). In fact, given a vector 
 $v\in\CC^{10}$, the Clifford multiplication by $v$ sends $\Delta_+$ to $\Delta_-\simeq \Delta_+^\vee$ and we can let 
 $q_v(\delta)=\langle v.\delta,\delta\rangle $ for any spinor $\delta\in \Delta$. Now $S_+\subset\PP(\Delta_+)$ is  
 cut out by the restriction of those quadrics to $\Delta_+$. The decomposition $\CC^{10}=U\oplus U^{\perp}$ gives two 
 types of quadrics. For $v\in U\simeq S\otimes T$, the Clifford action of $v$ on $\Delta_+=R \otimes S$ maps it to 
 $R\otimes T\simeq R\otimes T^\vee$, and therefore the quadric $q_v$ vanishes identically on $\Delta_+$. But for 
 $v\in U^{\perp}\simeq\wedge^2R$, the Clifford action of $v$ on $\Delta_+$ sends it to its dual, and the quadric $q_v$
 is non zero. 
 So $S_+\subset\PP(\Delta_+)\simeq \PP(R\otimes S)$ is the locus cut-out by the quadrics parametrized by $\wedge^2R\simeq\wedge^2R^\vee$, and this locus is just $\PP(R)\times \PP(S)\simeq\PP^3\times\PP^1$. 
 Cutting it by $\PP(P_+)$, a 
 generic two-codimensional subspace, we get an irreducible surface which is a rational quartic scroll. 
\end{proof}

\medskip\noindent {\it Remark}. In genus $8$ and $9$ we have been able to understand part of the exceptional 
automorphism group as acting on some auxiliary elliptic curve, either by pointwise symmetries or translations 
by torsion points. In genus $7$ there is an associated abelian surface \cite[Theorem 9.5]{gsw}, but its 
construction is more involved 
and we have no geometric interpretation yet of the exceptional automorphism group as acting on this surface. 
%

\bibliographystyle{alpha}

\medskip
\noindent
\textsc{Institut de Math{\'e}matiques de Toulouse ; UMR 5219,  UPS, F-31062 Toulouse Cedex 9, France}

\noindent
\textit{Email address}: \texttt{thomas.dedieu@math.univ-toulouse.fr, \\
  manivel@math.cnrs.fr}

\medskip

\bigskip

\newcommand{\NN}{\mathbb{N}}
\newcommand{\CCC}{\mathcal{C}}
\newcommand{\OOO}{\mathscr{O}}
\newcommand{\HHH}{\mathscr{H}}
\newcommand{\g}{\mathrm{g}}
\newcommand{\Eu}{\upchi_{\mathrm{top}}}
\newcommand{\J}{\operatorname{J}}
\newcommand{\Alb}{\operatorname{Alb}}
\newcommand{\Bs}{\operatorname{Bs}}
\newcommand{\Pic}{\operatorname{Pic}}
\newcommand{\F}{\mathrm{F}}

\bigskip 

\appendix
\section{Automorphism groups of prime Fano threefolds of genus twelve}
\smallskip
\begin{center}by \textsc{Yuri Prokhorov}\end{center}
\medskip

\setcounter{section}{1}
\begin{mtheorem}
\label{mthm}
The automorphism group of a  general (in the moduli sense) prime Fano threefold of genus $12$ is trivial.
\end{mtheorem}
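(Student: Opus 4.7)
My plan is to reduce the statement to a stabilizer computation on a Grassmannian, in the same spirit as the main body of the paper.

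By Mukai's classical description of $V_{22}$, every smooth prime Fano threefold of genus $12$ is of the form $X_U$, where $V = \CC^7$ and $U \subset \wedge^2 V^*$ is a $3$-dimensional subspace, and $X_U$ is cut out in the Grassmannian $\G(3,V)$ by the vanishing of the forms in $U$ restricted to the tautological rank-$3$ subbundle. A parameter count gives $12 - 3\cdot 3 = 3$ for the dimension of $X_U$, matching that of a Fano threefold. This exhibits the moduli of $V_{22}$'s as birational to the GIT quotient $\G(3, \wedge^2 V^*) /\!/ \PGL(V)$, of the expected dimension $54 - 48 = 6$. Moreover, a Mukai-type theorem analogous to Proposition~\ref{p:mukai} guarantees that every automorphism of $X_U$ is induced by a linear transformation of $V$ preserving $U$, so that $\Aut(X_U) \cong \Stab_{\PGL(V)}(U)$. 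Theorem~\ref{mthm} therefore reduces to showing that a generic $3$-dimensional $U \subset \wedge^2 \CC^7$ has trivial stabilizer in $\PGL_7$.

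To prove the latter, my plan is to apply the strategy of \S\ref{s:anciliary}: for every non-trivial conjugacy class $\O \subset \PGL_7$, verify the inequality
\[
  \dim \O + d_3(\O) \;<\; \dim \G(3,\wedge^2 \CC^7) \;=\; 54,
\]
where $d_3(\O)$ is the dimension of the variety of $3$-planes in $\wedge^2 \CC^7$ stabilized by an element of $\O$, bounded via Proposition~\ref{dim-stable}. For unipotent elements, the fifteen nilpotent orbits in $\fsl_7$ are indexed by partitions of $7$; for each, I would decompose $\CC^7$ into a sum of Jordan blocks and apply Propositions~\ref{Jordan-tensor} and~\ref{Jordan-wedge} to read off the Jordan type of the induced action on $\wedge^2 \CC^7$, then use Proposition~\ref{dim-stable}(2) to bound $d_3(\O)$. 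For instance the regular nilpotent, of orbit dimension $42$, has action of type $(3,7,11)$ on $\wedge^2\CC^7$, for which Proposition~\ref{dim-stable}(2) yields $d_3 \le 6$, and $42+6=48<54$. For semisimple elements, I would stratify on multiplicities and coincidences among the $21$ weights $t_i t_j$ of $\wedge^2 \CC^7$ (subject to $\prod_i t_i = 1$), and apply Proposition~\ref{dim-stable}(1) on each stratum.

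The main obstacle will be the length of the semisimple case analysis: a rank-$6$ Cartan with $21$ weights produces many strata, and a rigorous check is best handled by the python toolkit developed for the main paper. Since the modular dimension $6$ matches that in genus $9$, I expect all the required inequalities to hold without any exception, and the analysis to be tedious but routine. A complementary (possibly quicker) route would be to invoke the existing classification of smooth $V_{22}$ admitting non-trivial automorphism groups, due to Prokhorov and others, and to verify that each such special locus has strictly positive codimension in the $6$-dimensional moduli space.
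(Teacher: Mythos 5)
Your proposed route is genuinely different from the one taken in the appendix, and it has a real gap at its foundation. You reduce everything to the claim that $\Aut(X_U)\cong\Stab_{\PGL_7}(U)$ for the net $U\subset\wedge^2\CC^{7*}$, by invoking ``a Mukai-type theorem analogous to Proposition~\ref{p:mukai}.'' But Proposition~\ref{p:mukai} applies to \emph{linear sections} of the homogeneous Mukai varieties $M_g$, $7\le g\le 10$, and its proof leans on the very specific set-up of \cite{mukai-K3Fano} (restriction to $K3$ sections, uniqueness of the stable rigid bundle, Koszul/Bott vanishing for hyperplane sections). A genus~$12$ threefold is not a linear section of a homogeneous space: $X_U$ is the zero locus of a section of a rank-$9$ vector bundle on $\G(3,7)$, so the cohomological machinery and even the statement of the proposition must be redone from scratch. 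Until you either prove that reduction or give a precise citation, the rest of your program (the $\fsl_7$-nilpotent and semisimple stratifications, which look fine as far as you push them) has nothing to bite on. Your closing heuristic — that no exceptions should appear because ``the modular dimension $6$ matches that in genus $9$'' — is not a justification either: the genus~$9$ computation in the paper \emph{does} produce non-trivial generic stabilizers in codimension $k_9=2$, so matching moduli dimensions does not rule out exceptional finite stabilizers, and you would actually have to run the full check.

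The appendix sidesteps all of this. Prokhorov first shows $\Aut(X)\hookrightarrow\Aut(\F_1(X))$ for \emph{any} $X_{22}$, using double projection from a line onto the quintic del Pezzo threefold $Y_5$ together with the topological Lefschetz fixed point formula (an element of $\Aut(X)$ acting trivially on $\F_1(X)$ would fix a whole hyperplane section of $Y_5$, forcing $\Eu(Y^\varphi)\ge7>4$ in the finite-order case, and acting trivially on the three lines through a general point in the infinite-order case). He then uses Mukai's $VSP(C,6)$ model and the birationality of the Scorza map $C\mapsto(F_C,\Theta)$ to conclude that for general $C$ the plane quartic $F_C=\F_1(X)$ is itself general, hence has no automorphisms. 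This avoids any Mukai-type extension theorem and any computation in $\G(3,\wedge^2\CC^7)$ altogether. If you want to pursue your route, you must first supply the missing reduction theorem for genus $12$; alternatively, your ``complementary route'' (quoting a classification of $X_{22}$ with extra automorphisms and checking codimensions) is closer in spirit to what Prokhorov does, but still requires a reference that makes the reduction unnecessary.
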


\begin{proof}

For a prime Fano threefold $X$ we denote by~$\F_1(X)$ 
the Hilbert scheme of lines, i.e. curves in~$X$ with Hilbert polynomial $h_1(t) = t + 1$. 
It is known that $\F_1(X)$ is of pure dimension $1$ (see e.g. \cite{Kuznetsov-Prokhorov-Shramov}).

\begin{claim}
\label{claim:g=12}
For any  prime Fano threefold  $X=X_{22}\subset \mathbb P^{13}$
the natural homomorphism 
\[
\Psi:\Aut(X)\to \Aut(\F_1(X))
\]
is injective. 
\end{claim}

\begin{proof}
Assume that  $\Psi$ is not injective. 
Take a non-trivial element $\varphi\in \operatorname{Ker}(\Psi)$. Thus $\varphi$  acts trivially on $\F_1(X)$.
 Fix a line $l\subset X$.
Apply the double projection \cite[Theorem~4.3.3, Theorem~4.3.7]{IP99} from $l$. 
This is the birational map 
\[
 \theta: X \dashrightarrow Y \subset \PP^6
\]
given by the linear system $|-K_X-2l|$ of 
hyperplane sections which are singular along $l$. 
Here $Y=Y_5\subset \PP^6$ is a smooth quintic del Pezzo threefold and the 
$\theta$-exceptional divisor is contracted 
to a rational normal quintic curve  $\Gamma \subset Y\subset \PP^6$. 
The map $\theta$ induces a $\varphi$-action on $Y\subset \PP^6$ 
by a projective transformation and the curve $\Gamma$ is $\varphi$-invariant.
A general line $l'\subset X$ is mapped to a line $m'\subset Y$ meeting $\Gamma$ at one point. 
The set of lines in  $Y$ passing through any point $y\in Y$ is finite (see e.g. \cite[Corollary~5.1.5]{Kuznetsov-Prokhorov-Shramov}).
 Since $\dim \F_1(X)=1$, 
the automorphism $\varphi$ acts trivially on $\Gamma$.
Thus the fixed point locus $Y^\varphi$ contains the hyperplane section $S:=Y\cap \langle\Gamma\rangle$. Recall that $H^2(Y,\ZZ)\simeq \Pic(Y)\simeq \ZZ$ and $H^3(Y,\ZZ)=0$
(see e.g. \cite[\S~12.2]{IP99}). Hence the induced action of $\varphi$ on $H^q(Y,\CC)$ is trivial for any $q$.

Assume that $\varphi$ is an element of finite order. Then its 
fixed point locus $Y^\varphi$ is smooth. 
Hence $Y^\varphi$ contains no one dimensional components (because $\uprho(Y)=1$) and  $S$ is a
smooth del Pezzo surface.  In particular, $\Eu(Y^\varphi)\ge 7$. 
This contradicts the topological Lefschetz fixed point formula \cite[Prop.~5.3.11]{Dieck1979}:
\[
\Eu(Y^\varphi)=\sum_q (-1)^q
\operatorname{Tr} \big(\varphi^*|_{H^q(Y,\CC)}  \big) =\sum_q (-1)^qh^q(Y,\CC)
=\Eu(Y) =4.
\]

Therefore $\varphi$ is an element of infinite order.
Any line on $Y$ meets $S$  hence $\varphi^m$ acts trivially 
on $\F_1(Y)$ for some $m$ (in fact, $m\le 3$).
Recall that there are exactly three lines in $Y$ passing through a general point $y\in Y$. This implies that $\varphi^m$ acts trivially on $Y$, a contradiction.
\end{proof}

Now we use  Mukai's realization of $X=X_{22}\subset \mathbb P^{13}$
as $VSP(C,6)$ where $C$ is a plane quartic
\cite{Mukai-1989}. 
Take a general quartic $C\subset \mathbb P^2$ and let $X=VSP(C,6)$. 
Then the curve $\F_1(X)$ is also a smooth plane quartic $F_C$ which is \textit{covariant} of $C$ \cite[Theorem~6.1]{Schreyer2001}.
The curve $\F_1(X)=F_C$ has a natural $(3,3)$-correspondence  of intersecting lines which defines an even theta characteristic $\Theta$ on $F_C$. There is a map $C \longmapsto (F_C,\Theta)$  of the corresponding moduli spaces which is called \textit{Scorza map}. It is birational \cite[Theorem~7.8]{Dolgachev-Kanev}. In particular, this implies that the curve $F_C$ is general in the moduli space of plane quartics. Since the plane quartic $F_C$ is general, we have  $\Aut(F_C)=\{1\}$. 
Hence  $\Aut(X)=\{1\}$ for $X=VSP(C,6)$ by Claim~\ref{claim:g=12}.
\end{proof}

\begin{mremark}
Note that in contrast with the cases $g\le 10$ the automorphism group 
of a prime Fano threefold of genus $g=12$ can be infinite.
We refer to \cite{Prokhorov-1990c}, \cite{Kuznetsov-Prokhorov-Shramov},
\cite{Kuznetsov-Prokhorov} for description of infinite groups of automorphisms.
\end{mremark}

\bibliographystyle{alpha}

\medskip

\sc{Steklov Mathematical Institute of Russian Academy of Sciences $\&$ Laboratory of Algebraic Geometry, NRU HSE 
$\&$ Department of Algebra, Moscow State University, Moscow, Russia}

\it{Email address:} \tt{prokhoro@mi-ras.ru}
\end{document}